
\documentclass[smallextended,referee,envcountsect,]{svjour3}
\smartqed
\usepackage{graphicx}
\journalname{JOTA}

\usepackage{amssymb,amsmath}
\usepackage{hyperref}
\usepackage[capitalise]{cleveref}
\usepackage{algorithm}
\usepackage{algorithmic}
\newcommand{\w}{\mathbf}
\newcommand{\blue}{\textcolor{black}}

\newtheorem{assumption}{Assumption}
\newtheorem{condition}{Condition}
\usepackage{enumitem}
\usepackage{subcaption}
\usepackage{cite}
\usepackage[dvipsnames]{xcolor}

\begin{document}

\title{A Multilevel Method for Self-Concordant Minimization}


\author{Nick Tsipinakis \and Panos Parpas}

\institute{Nick Tsipinakis \at
             UniDistance Suisse \\
              Brig, Switzerland\\
              nikolaos.tsipinakis@unidistance.ch
           \and
              Panos Parpas,  Corresponding author  \at
              Imperial College London \\
              London, U.K.\\
              p.parpas@imperial.ac.uk
}

\date{Received: date / Accepted: date}

\maketitle

\begin{abstract}
The analysis of second-order optimization methods based either on sub-sampling, randomization or sketching has two serious
shortcomings compared to the conventional Newton method. The first shortcoming 
is that the analysis of the iterates has only been shown to be scale-invariant only under specific assumptions on the problem structure. The second shortfall is that the fast convergence rates 
of second-order methods have only been established by making assumptions regarding the input data. 
In this paper, we propose a randomized Newton method for self-concordant functions to address both shortfalls. We propose a Self-concordant Iterative-minimization-Galerkin-based Multilevel
 Algorithm (SIGMA) and establish its super-linear convergence rate using the theory of self-concordant functions.
Our analysis is based on the connections between multigrid optimization methods, and the role of 
coarse-grained or reduced-order models in the computation of search directions. We take advantage of the insights from the
 analysis to significantly improve the performance of second-order
methods in machine learning applications. We report encouraging initial experiments that suggest SIGMA  outperforms
 other state-of-the-art sub-sampled/sketched Newton methods for both medium and large-scale problems. 
\end{abstract}
\keywords{convex optimization \and randomized Newton method \and multilevel methods \and self-concordant functions \and machine learning}
\subclass{49J53 \and  49K99 \and more}


\section{Introduction}
First-order optimization methods, stochastic, proximal, accelerated or otherwise, are a popular class of algorithms; especially for the
 large-scale optimization models that arise in modern machine learning applications. 
The ease of implementation in distributed architectures and the ability to obtain a reasonably accurate solution quickly are the
 main reasons for the dominance of first-order methods in machine 
learning applications. In the last few years, second-order methods based on variants of the Newton method have also been proposed. 
Second-order methods, such as the Newton method, offer the potential of quadratic convergence rates, and scale invariance. 
Both of these features are highly desirable in optimization algorithms and are not present in first-order methods.

Scale invariance is crucial because it means that the algorithm is not sensitive to the input data (see \cite{pilanci2017newton} for a
 thorough discussion of the consequences of scale invariance in machine learning applications).
Unfortunately, the conventional Newton method has huge storage and computational demands and does not scale to applications that
 have both large and dense Hessian matrices.
To improve the convergence rates, and robustness of the optimization algorithms used in machine learning applications many authors
have recently proposed modifications of the classical Newton method. 
We refer the interested reader to the recent survey in \cite{MR3797719} for a thorough review.
Despite the recent interest, and developments in the application of machine learning applications, existing approaches suffer from
one or both of the following shortfalls that we address in this
paper. 

\textbf{I: Lack of scale-invariant convergence analysis without restrictive assumptions.} The Newton algorithm can be
 analyzed using the elegant theory of self-concordant functions. 
Convergence proofs using the theory of self-concordant functions enable the derivation of convergence rates that are independent
 of the Lipschitz constants and strong convexity parameters of the model\cite{MR1258086}. In machine learning applications these constants are related to the input data of the
 problem 
(e.g., the dictionary in supervised learning applications), so having a theory that is not affected by the scaling of the data is
 quite important both for practical reasons (e.g., choice of step-sizes), and theory (rates derived using this approach do not
  depend 
on these constants). 

\textbf{II: Lack of global convergence guarantees accompanied with a local super-linear convergence rate without ad-hoc
assumptions regarding the spectral properties of the input data.} 
The second major feature of second-order methods is their global theory and their extremely fast and convergence rates near the minimizer. 
Indeed, the analysis of second-order methods should exhibit this feature in order to justify its higher per-iteration and storage costs when compared to first-order methods.

The above shortcomings have significant implications regarding the practical performance of
 second-order methods in machine learning applications 
(see \cite{arjevani2016oracle,berahas2020investigation} for additional discussion). 
To address the two shortcomings we propose a Newton-based multilevel algorithm that can scale to realistic convex models that arise in
large-scale optimization. The method is general in the sense that it does not
assume that the objective function is a sum of functions, and we make no assumptions regarding the data regime. 
Our theoretical analysis is based on the theory of  self-concordant functions and we are able to prove a local super-linear convergence of the algorithm. 
Specifically, the super-linear convergence rate is achieved when search directions are computed in both a deterministic
and randomized manner. For the latter case, we prove convergence in probability. We obtain our theoretical results by drawing parallels between
the second-order methods used in machine learning, and the so-called Galerkin model from the multigrid optimization literature.

\subsection{Related Work}
In this section we discuss the methods most related to our approach and discuss the theoretical and practical limitations of the
current state-of-the-art. We begin with multigrid optimization methods (also known as multilevel methods).  The philosophy behind multigrid methods constitutes the
basis of our approach.

Multilevel optimization algorithms have been shown to be very efficient for large-scale problems and in many cases outperform other state-of-the-art methods, see for instance \cite{nash2000multigrid, gratton2008recursive, MR2587737, ho2019newton}. Nash in
\cite{nash2000multigrid} introduced the MG/Opt method for solving unconstrained convex optimization. 
However, no convergence rates were given in \cite{nash2000multigrid}. Furthermore,  a
“smoothing” step is required when switching to different levels which is inefficient for large-scale applications.  In
\cite{gratton2008recursive}, the authors expand the 
work in \cite{nash2000multigrid} for trust region methods, but the expensive smoothing step is still required. The
work in \cite{MR2587737} proposes a line search method with global linear convergence rate. Additionally, instead of taking the
smoothing step, the authors introduce new conditions which produce effective search directions and yield faster iterations
compared to \cite{nash2000multigrid, MR2587737}. But even the improved conditions in  \cite{MR2587737} can still be inefficient in practice. The most related
multilevel method to ours is proposed in \cite{ho2019newton}. There, the authors show a sub-linear rate that is followed by a local composite (linear-quadratic) convergence rate for strongly convex functions. However, their analysis achieves a slow rate in the first-phase and they do not prove a super-linear rate.

Similar to multilevel methods, subspace Newton methods have also been proposed to overcome the limitations of the standard Newton method 
\cite{qu2016sdna, gower2019rsn, hanzely2020stochastic}. SDNA \cite{qu2016sdna} randomly selects principal sub-matrices of a matrix $\w{M}$ which is, 
by assumption, an upper bound on the Hessian matrix. However, the sketch matrices are not directly applied to the 
Hessian. An extension that overcomes this limitation was proposed for the Randomized Subspace Newton 
method (RSN) \cite{gower2019rsn}, which, as in this work, allows the user to select from a larger set of sketch matrices. Similarly, a stochastic 
subspace variant of the Cubic Newton method was proposed in \cite{hanzely2020stochastic}. When the regularization parameter of the cubic term is set 
to zero the method reduces to our approach and RSN. Nevertheless, the convergence rate for the above method is linear.

Several variants of the Newton method based on sub-sampling or sketching have been proposed
\cite{berahas2020investigation, byrd2011use, byrd2012sample, bollapragada2019exact, NIPS2015_404dcc91, pilanci2017newton,
roosta2019sub, kovalev2019stochastic}.  Sub-sampled Newton methods are not directly comparable with our approach because they are not reduced-order (or subspace) methods. Nevertheless, we discuss them below due to their well established theory and fast convergence rates. The authors in \cite{byrd2011use, byrd2012sample} introduced sub-sampling
techniques on the Hessian matrix of the Newton method, however, they do not provide convergence rates. The work in
\cite{NIPS2015_404dcc91} proves a local composite rate with high probability, however, the objective function must be
written as a sum of $m$ functions and further the method is efficient only for $m \gg n$, where $n$ is the problem dimension. 
The authors in \cite{kovalev2019stochastic} show a local super-linear convergence rate of the sub-sampled Newton
method, and convergence is attained even when the batch size is equal to one, but still the objective function needs be written as a sum of functions.
The method proposed in \cite{bollapragada2019exact} not only sub-samples the Hessian matrix but also the gradient vector. The
method enjoys a local super-linear convergence rate and the results are given in expectation. The authors in 
\cite{roosta2019sub} extended the results in \cite{bollapragada2019exact}, and provide a local
super-linear convergence rate with high probability. 
The method with the most related theoretical results to ours is 
Newton Sketch in \cite{pilanci2017newton}. The authors assume self-concordant functions and thus they are able to provide a global and scale-invariant analysis with a local super-linear convergence rate as we do in this paper. 
However, the square root of the Hessian matrix must be 
available. Furthermore, the method has expensive iterations as
it requires matrix multiplications to form the 
sketched Hessian which is much slower compared to randomly 
selecting data points. To this end, all the above methods fail to address one or both of the shortcomings discussed in the previous section, and when
they do so, further assumptions are required such as regarding the objective function (sum of functions) or knowledge of the square root of the
Hessian matrix. 

\subsection{Contributions}

In view of the related work, in this paper we propose the Self-concordant Iterative-minimization Galerkin-based Multilevel
Algorithm (SIGMA) for unconstrained high-dimensional convex programs, and we provide convergence analysis that addresses both
shortcomings discussed earlier in this section. Below we discuss in detail the main contributions of this paper. We begin with the main theoretical properties of SIGMA.

The proposed approach is based on the multilevel framework of \cite{ho2019newton}, but instead, we assume that the objective
function is strictly convex and self-concordant. Similar to \cite{ho2019newton}, we employ the Galerkin model as the target coarse-grained model. 
We show that the Galerkin model is first- and second-order coherent and we provide connections with the classical Newton method. Such connections are 
important as they enhance our intuition regarding the subspace methods discussed in the previous section. Moreover, in \cite{MR2587737, 
ho2019newton}, the authors make use of conditions that yield effective coarse directions. Instead, in this work we propose alternative conditions 
that are tailored to self-concordant functions. In \cref{thm418}, we show that, when the Galerkin model is constructed in a deterministic manner, 
SIGMA enjoys a globally convergent first phase (damped phase) with linear rate followed by a local composite rate. The theorem shows that the proposed method can achieve a local super-linear or quadratic convergence rate. A complete convergence behavior, that accounts for when the method alternates between SIGMA and the Newton method is presented in \cref{corollary complete convergence}.
In \cref{thm422} and \cref{thm quadratic coarse through fine}, we prove a local quadratic convergence rate in the subspace (lower level). 
A proof of the Armijo-rule criterion is also provided. 
 
We also study the Galerkin model that is generated by the Nystr\"om method (sampling according to some distribution function) and we show that the coarse direction can be seen as a randomized approximation of the Newton direction. In \cref{thm seuperlinear probabilities}, we prove the local super-linear convergence rate of SIGMA with probability $1-\rho$. 
More importantly, the theorem indicates how to construct the random coarse directions (i.e., how to select the sampling distribution) in order to achieve convergence guarantees with high-probability and also probability one. 
In particular, we consider the uniform distribution without replacement for constructing the coarse direction (conventional SIGMA) and we discuss cases for which $\rho$ may be small.
Moreover, we introduce a new \emph{adaptive} sampling strategy in order to ensure convergence with probability one. We also introduce a mixed strategy that interpolates between the uniform and adaptive strategies and offers better results than those of the conventional SIGMA method. The idea of introducing new sampling strategies in place of the uniform sampling scheme has been studied in the context of coordinate descent methods \cite{perekrestenko2017faster, flamary2015importance}. However, to the best of our knowledge, this is the first paper that shows the benefits of alternative sampling schemes on multilevel or subspace methods. Furthermore, constructing coarse directions using the Nystr\"om method, we significantly improve 
the computational complexity of the algorithm, since the expensive matrix multiplications can be abandoned. Under this regime, we identify instances such that the expensive conditions which provide effective coarse direction in each iteration are no longer necessary. We note that this is not possible for other multilevel methods such as \cite{ho2019newton, MR2587737}. 
Lastly, in \cref{thm expects} we provide a simple convergence result in expectation which provides a complete picture of the convergence behavior of SIGMA in this regime.



The method is suitable for large-scale optimization. For instance, consider the standard setting in machine learning applications: $f : \w{x} \rightarrow \w{Ax}$, where $\w{A} \in \mathbb{R}^{m \times N}$ is the dataset matrix and let $m, N \gg 1$. 
If we assume that $n \leq N$ (or even $n \ll N$) are the dimensions of the model in the coarse {level}, then SIGMA requires $\mathcal{O}(mn^2)$ operations to form the Hessian, and, therefore, solving the corresponding system of linear equations to obtain search directions can be carried out efficiently. If the objective function cannot be written as a sum of functions we show that the complexity of iterates of SIGMA using the Nystr\"om method is $\mathcal{O}(n^3 + nN)$ which is a significant improvement compared to the Newton method that requires $\mathcal{O}(N^3 + N^2)$ operations. Note also that the standard sub-sampled Newton methods require approximately $\mathcal{O}(N^3)$ operations for solving the system of linear equations and thus they have the same limitations with the conventional Newton method. Thus, the proposed method significantly improves the applicability of second-order methods in high-dimensional settings. 
We illustrate this with numerical experiments for solving the problem of maximum likelihood estimation in large-scale machine learning. In particular, numerical experiments based on standard benchmark problems and other state-of-the-art second-order methods suggest that the conventional SIGMA compares favorably with 
the state-of-the-art. For specific problem structures, say problems with nearly low-rank Hessian matrices, SIGMA is typically several times faster and more robust. In contrast with the sub-sampled Newton methods that they are suitable only when $m \gg n$, we show that SIGMA is suitable for a wide range of problems and both regimes such as $m<N$ and $m>N$. {We perform comparison between the conventional SIGMA and the one that employs the new sampling strategies. Numerical results show substantial improvements in the convergence rates of SIGMA using the new sampling strategies (up to four times faster). These numerical results are on-par with our improved theoretical results. 
Hence, the alternative sampling strategies can considerably accelerate convergence of the multilevel or subspace methods, such as \cite{ho2019newton, MR2587737, gower2019rsn, hanzely2020stochastic}. Finally, numerical experiments clearly show that SIGMA achieves super-linear and quadratic convergence rates. In this case too, the practice is in agreement with the theory.
}

\subsection{Notation and Preliminaries} \label{sec prel}

Throughout this paper, all vectors are denoted with bold lowercase letters, i.e., $\w{x} \in \mathbb{R}^n$ and all matrices with bold uppercase letters, i.e., $\w{A} \in \mathbb{R}^{m \times n}$. The function $\| \w{x} \|_2 = \langle \w{x}, \w{x} \rangle^{1/2} $ is the $\ell_2$- or Euclidean norm of $\w{x}$. The spectral norm of $\w{A}$ is the norm induced by the $\ell_2$-norm on $\mathbb{R}^n$ and it is defined as $\| \w{A} \|_2 := \underset{\| \w{x} \|_2 = 1}{\max} \|\w{A} \w{x} \|_2$. It can be shown that $\| \w{A} \|_2 = \sigma_1(\w{A})$, where $\sigma_1(\w{A})$ (or simply $\sigma_1$) is the largest singular value of $\w{A}$, see \cite[Section 5.6]{MR2978290}. For two symmetric matrices $\w{A}$ and $\w{B}$, we write $\w{A} \succeq \w{B} $ when $\w{x}^T (\w{A} - \w{B}) \w{x} \geq 0$, for all $\w{x} \in \mathbb{R}^n$, or otherwise when the matrix $\w{A} - \w{B}$ is positive  semi-definite. Below we present the main properties and inequalities for self-concordant functions. For a complete analysis see \cite{MR2142598, MR2061575}. A univariate convex function $\phi: \mathbb{R} \rightarrow \mathbb{R}$ is called self-concordant with constant $M_{\phi} \geq 0$ if and only if,
\begin{equation} \label{selfconcord definition}
 | \phi'''(x) | \leq M_{\phi} \phi''(x)^{3/2}.
\end{equation}
Examples of self-concordant functions include but not limited to the linear, convex quadratic, negative logarithmic and negative log-determinant functions. Based on the above definition, a multivariate convex function $f : \mathbb{R}^n \rightarrow \mathbb{R} $ is called self-concordant if $\phi(t) := f(\w{x} + t\w{u})$ satisfies \cref{selfconcord definition} for all $\w{x} \in \operatorname{dom} f, \w{u} \in \mathbb{R}^n$ and $t \in \mathbb{R}$ such that $\w{x} +t \w{u} \in \operatorname{dom} f $. Furthermore, self-concordance is preserved under composition with any affine function. In addition, for any convex self-concordant function $\tilde{\phi}$ with constant  $M_{\tilde{\phi}} \geq 0$ it can be shown that $\phi(x) := \frac{M_{\tilde{\phi}}^2}{4} \tilde{\phi}(x)$ is self-concordant with constant $M_\phi = 2$. 
Next, given $\w{x} \in \operatorname{dom}f$ and assuming that $\nabla^2 f(\w{x})$ is positive-definite we can define the following norms
\begin{equation} \label{definition norm_x}
 \| \w{u} \|_\w{x} := \langle \nabla^2 f(\w{x}) \w{u}, \w{u}  \rangle^{1/2} \ \ \text{and} \ \ 
 \| \w{v} \|_\w{x}^* := \langle [\nabla^2 f(\w{x})]^{-1} \w{v}, \w{v}  \rangle^{1/2},
\end{equation}
for which it holds that $ | \langle \w{u}, \w{v}  \rangle | \leq \| \w{u} \|_\w{x}^* \| \w{v} \|_\w{x}$. Therefore, the Newton decrement can be written
as 
\begin{equation} \label{newton decrement}
 \lambda_f(\w{x}) := \| \nabla f(\w{x}) \|_\w{x}^* = \|[\nabla^2 f(\w{x})]^{-1/2} \nabla f(\w{x})  \|_2.
\end{equation}
In addition, we take into consideration two auxiliary functions, both introduced in \cite{MR2142598}. Define the functions $\omega$ and 
$\omega_*$ such that
\begin{equation} \label{omegas}
 \omega(x) := x - \log(1+x) \quad \text{and} \quad \omega_*(x) := -x - \log(1-x),
\end{equation}
where $ \operatorname{dom} \omega = \{ x \in \mathbb{R} : x\geq0 \}$ and $\operatorname{dom}\omega_* = \{ x \in \mathbb{R} : 0 \leq x < 1 \}$, 
respectively, and $\log(x)$ denotes the natural logarithm of $x$. Moreover, note that both functions are convex and their range is the set of positive real numbers.  Further, from the definition \cref{selfconcord definition}, for $M_{\phi} = 2$, we have that 
\begin{equation*}
 \left|  \frac{d}{dt} \left( \phi''(t)^{-1/2} \right) \right| \leq 1,
\end{equation*}
from which, after integration, we obtain the following bounds
\begin{equation} \label{relation univariate sandwitz}
  \frac{\phi''(0)}{(1 + t \phi''(0)^{1/2})^2} \leq \phi''(t) \leq \frac{\phi''(0)}{(1 - t \phi''(0)^{1/2})^2}
\end{equation}
where the lower and the uppers bounds hold for $t\geq0$ and $t \in [0, \phi''(0)^{-1/2})$,  with $t \in \operatorname{dom}\phi$, respectively 
(see also \cite{MR2061575}). Consider now self-concordant functions on $\mathbb{R}^n$ and let  
$ S(\w{x}) = \left\lbrace \w{y} \in \mathbb{R}^n : \| \w{y} - \w{x} \|_\w{x} < 1 \right\rbrace$. For any $\w{x} \in \operatorname{dom}f$ and 
$\w{y} \in S(\w{x})$, we have that (see \cite{MR2142598})
\begin{equation} \label{relation sandwitz}
   (1 - \| \w{y} - \w{x} \|_\w{x})^2 \nabla^2 f(\w{x}) \preceq \nabla^2 f(\w{y}) \preceq \frac{1}{(1 - \| \w{y} - \w{x} \|_\w{x})^2} 
   \nabla^2 f(\w{x}).
\end{equation}
Throughout this paper, we refer to notions such as 
super-linear and quadratic convergence rates. We define these term below. 
Denote $\w{x}_k$ the iterate generated by an iterative process at 
the $k^{\text{th}}$ iteration. The sub-optimality gap of 
the Newton method for self-concordant functions satisfies 
the bound 
$f(\w{x}_k) - f(\w{x}^*) \leq \lambda_f(\w{x}_k)^2$ which 
holds for $\lambda_f(\w{x}_k) \leq 0.68$ \cite{MR2061575}, and thus one can estimate the convergence rate in terms of the local norm of the gradient. It is known that the Newton method achieves a local quadratic convergence rate. In this setting, we say that a process converges quadratically if $\lambda_f(\w{x}_{k+1}) \leq R \lambda_f(\w{x}_{k})^2$, for some $R > 0$. Furthermore, for $R_1, R_2 > 0$,
a process achieves composite convergence rate if 
$\lambda_f(\w{x}_{k+1}) \leq R_1 \lambda_f(\w{x}_{k}) + R_2 \lambda_f(\w{x}_{k})^2$. In addition to quadratic and composite 
convergence rates, we say that a process converges with super-linear rate if $\lambda_f(\w{x}_{k+1}) / \lambda_f(\w{x}_{k}) \leq R(k)$, for some $R(k) \downarrow 0$.

\section{Multilevel Models for Unconstrained Optimization} \label{sec background}
In this section we summarize results for multilevel methods in optimization. {Sections \ref{sec problem framework}-\ref{sec galerkin model} constitute background material and can be found also in \cite{ho2019newton, MR2587737}. In \cref{sec nystrom intro}, we describe the Nystr\"om method which will be used to construct the coarse direction, and introduce the new sampling strategies. In \cref{sec fine search}, we introduce the conditions that guarantee effective coarse directions and provide general results for self-concordant functions.}

\subsection{Problem Framework and Settings} \label{sec problem framework}
Let $\w{x}_h^*$ be defined as the solution of the following optimization problem, 
\begin{equation*}
\w{x}_h^* =  \underset{\w{x}_h \in \mathbb{R}^N}{\operatorname{arg \ min}} f_h(\w{x}_h)
\end{equation*}
where $f_h: \mathbb{R}^N \rightarrow \mathbb{R}$ is a continuous, differentiable and strictly convex self-concordant
function. Further, we suppose 
that it is bounded below so that a minimizer $\w{x}_h^*$ exists. Below we state our main assumption formally. 

\begin{assumption} \label{ass self-conc}
The function $f_h$ is strictly convex and self-concordant with constant $M_f = 2$.
\end{assumption}

Since this work constitutes an extension of the results in \cite{ho2019newton} to self-concordant functions, we adopt similar notation. We clarify that the subscript $h$ of $f_h$ denotes the \textit{fine} or original model we wish to minimize. In this paper, unlike the
idea of multigrid methods, where a hierarchy of several discretized problems is constructed, we consider only two levels. 
The model in the lower level (lower dimension) is called the \textit{coarse} model. Thus, the idea is to use information from the
coarse model to solve the fine model. As with $h$, we use the subscript $H$ to refer to the coarse level and $f_H$ to refer to the coarse model. Moreover, the dimensions related to the fine and coarse models are denoted with $N$ and $n$, 
respectively, that is, $\operatorname{dom} f_H = \{ \w{x}_H \in \mathbb{R}^n \}$ and $\operatorname{dom} f_h = \{ \w{x}_h 
\in \mathbb{R}^N \}$, where $n \leq N$. In traditional multigrid methods the coarse model is 
typically derived by varying a discretization parameter. In machine learning applications a coarse model can be derived by varying the number of 
pixels in image processing applications \cite{MR3395393}, or by varying the dictionary size and fidelity in 
statistical pattern recognition (see e.g. \cite{MR3572365} for examples in face recognition, and background extraction from video).

To ``transfer'' information from coarse to fine model and vice versa we define $\w{P}$ and $\w{R}$ to be the prolongation and 
restriction operators, 
respectively. The matrix $\w{P} \in \mathbb{R}^{N \times n}$ defines a mapping from coarse to fine level and matrix 
$\w{R} \in \mathbb{R}^{n \times N}$ from fine to coarse. The following assumption on the aforementioned operators is 
typical for multilevel methods, see for 
instance \cite{ho2019newton, MR2587737}.
\begin{assumption} \label{assumption P}
The restriction and prolongation operators $\w{R}$ and $\w{P}$ are connected via the following relation
\begin{equation*}
 \w{P} = \sigma \w{R}^T, 
\end{equation*}
where $\sigma>0$, and $\w{P}$ has full column rank, i.e., $ \operatorname{rank}(\w{P}) = n.$
\end{assumption}
For simplification purposes and without loss of generality we assume that $\sigma = 1$. Using the above operators we construct the coarse model as follows. First, let $\w{x}_{h,k}$ be the $k^{\text{th}}$ iterate with associated gradient $\nabla f_h (\w{x}_{h,k})$. We initiate the coarse 
model with initial point $\w{x}_{H,0} := \w{R} \w{x}_{h,k}$. Then, the optimization problem, at the coarse level, at iteration $k$, takes the following form
\begin{equation} \label{coarse model definition}
 \underset{\w{x}_H \in \mathbb{R}^n}{\operatorname{min}} \psi_H(\w{x}_H) :=  f_H(\w{x}_H) + \langle \w{u}_H, \w{x}_H - \w{x}_{H,0} \rangle , 
\end{equation}
where $\w{u}_H := \w{R} \nabla f_h(\w{x}_{h,k}) - \nabla f_H (\w{x}_{H,0})$ and $f_H : \mathbb{R}^n \rightarrow \mathbb{R}
$. Note that the above 
objective function is not just $f_H(\w{x}_H)$, but, in order for the coarse model to be \textit{first-order coherent}, the 
quantity 
$\langle \w{u}_H, \w{x}_H - \w{x}_{H,0} \rangle$ is added, which ensures that
\begin{equation*}
 \nabla \psi_H (\w{x}_{H,0}) = \w{R} \nabla f_h (\w{x}_{h,k}).
\end{equation*}
In addition to the first-order coherency condition, we also
assume that the coarse model is  \textit{second-order coherent}, i.e., $ \nabla^2 \psi_H (\w{x}_{H,0})  = \w{R} \nabla^2 f_h (\w{x}_{h,k})  \w{P}$. Later we discuss how the so-called \textit{Galerkin} model satisfies both first- and second-order coherency conditions.

\subsection{The Multilevel Iterative Scheme} \label{sec universal method}

The philosophy behind multilevel algorithms is to make use of the coarse model
\cref{coarse model definition} to provide the search directions. Such a direction is called a \textit{coarse direction}. If the search direction is computed using the original model then it will be called a \textit{fine direction}. 

To obtain the coarse direction we first compute
 \begin{equation} \label{coarse d_H}
  \hat{\w{d}}_{H,k} := \w{x}_H^* - \w{x}_{H,0},
 \end{equation}
where $\w{x}_H^*$ is the minimizer of \cref{coarse model definition}, and then we move to the fine level by applying the prolongation operator
\begin{equation} \label{coarse d_h}
 \hat{\w{d}}_{h,k} := \w{P} (\w{x}_H^* - \w{x}_{H,0}).
\end{equation}
Note that the difference in the subscripts in the above definitions is because $\hat{\w{d}}_{H,k} \in \mathbb{R}^n$ while 
$\hat{\w{d}}_{h,k} \in \mathbb{R}^N$. We also clarify that $\w{d}_{h,k}$, i.e., with the ``hat'' omitted, refers to the fine direction.  The update rule of the multilevel scheme is
\begin{equation} \label{gama scheme}
 \w{x}_{h,k+1} = \w{x}_{h,k} + t_{h,k} \hat{\w{d}}_{h,k},
\end{equation}
where $t_{h,k}>0$ is the step-size parameter.

It has been shown in \cite{MR2587737} that the coarse direction is a descent direction. However, this result does not 
suffice for $\hat{\w{d}}_{h,k}$ to always lead to reduction in the value (of the objective) function. By the first-order coherent condition, it is easy to see that when $\nabla f_h 
(\w{x}_{h,k}) \neq 0$ and $\nabla f_h (\w{x}_{h,k}) \in \operatorname{null}(\w{R})$ (i.e., $\w{R} \nabla f_h (\w{x}_{h,k}) 
= 0$) we have that $\w{x}_H^* - \w{x}_{H,0} = 0$, and thus $\hat{\w{d}}_{h,k} = 0$, which implies no progress for the multilevel scheme, \cref{gama scheme}.
To overcome this issue we may replace the coarse direction $\hat{\w{d}}_{h,k}$ with the fine direction $\w{d}_{h,k}$ when the former is ineffective. 
Examples of fine directions include search directions arising from Newton, quasi-Newton and gradient descent methods. This approach is very common in the multigrid literature for solving PDEs 
\cite{MR2587737, gratton2008recursive, lewis2005model}. The following conditions, proposed in \cite{MR2587737}, determine whether or not the fine direction 
should be employed, i.e., we use
$\w{d}_{h,k}$ when,
\begin{equation} \label{fine step euclidean conditions}
 \left\| \w{R} \nabla f_h (\w{x}_{h,k}) \right\|_2 \leq \mu \left\| \nabla f_h (\w{x}_{h,k}) \right\|_2 \ \ \text{or} \ 
 \ 
 \left\| \w{R} \nabla f_h (\w{x}_{h,k}) \right\|_2 \leq \epsilon
\end{equation}
where $\mu \in(0, \operatorname{min}(1, \| \w{R} \|_2))$. Hence, the above conditions need to be checked at each iteration and prevent the use of the coarse direction when
$\w{R} \nabla f_h (\w{x}_{h,k}) = 0$, while $\nabla f_h (\w{x}_{h,k}) \neq 0$ and $\w{x}_{H,0}$ is 
sufficiently close to the 
solution  $\w{x}_{H}^*$, according to some tolerance $\epsilon \in (0, 1)$. In PDE problems, alternating between the coarse and the fine direction is necessary for obtaining the optimal solution. In \cref{sec nystrom},
we show it is possible to select the prolongation operator such that the fine direction {is never taken yet the algorithm can still achieve a super-linear rate with probability 1}. 

\subsection{Coarse Model and Variable Metric Methods} \label{sec rel metric mthds}
In this section we discuss connections between the multilevel and variable metric methods, see also \cite{ho2019newton}. The descent direction of a standard variable metric method is given by,
\begin{equation}\label{variable metric}
\begin{split}
 \w{d}_{h,k} &= \underset{\w{d} \in \mathbb{R}^N }{\operatorname{arg \ min}} \left\lbrace \frac{1}{2} \| \w{Q}^{1/2}  \w{d}
 \|_2^2 + 
 \langle \nabla f_h (\w{x}_{h,k}), \w{d}  \rangle  \right\rbrace \\
  & = - \w{Q}^{-1} \nabla f_h (\w{x}_{h,k}),
\end{split}
  \end{equation}
where $\w{Q} \in \mathbb{R}^{N\times N}$ is a positive definite matrix. If, for instance, $\w{Q} = \nabla^2 f_h
(\w{x}_{h,k})$ 
we obtain the Newton method. If $\w{Q}$ is chosen as the identity matrix we obtain the steepest descent method. Based on the construction of the coarse model in \cref{coarse d_H}, we define $f_H$ to be a quadratic model as follows
\begin{equation*}
 f_H(\w{x}_H) := \frac{1}{2} \left\| \w{Q}_H^{1/2} (\w{x}_H - \w{x}_{H,0})  \right\|_2^2
\end{equation*}
where $\w{x}_{H,0} = \w{R} \w{x}_{h,k} $ and $\w{Q}_H \in \mathbb{R}^{n \times n}$ is a positive definite matrix. Then, the coarse model 
\cref{coarse model definition} takes the following form
\begin{equation} \label{coarse model with f_H}
\underset{\w{x}_H \in \mathbb{R}^n }{\operatorname{min}} \psi_H (\w{x}_H) = \underset{\w{x}_H \in \mathbb{R}^n
}{\operatorname{min}} \left\lbrace 
\frac{1}{2} \left\| \w{Q}_H^{1/2} (\w{x}_H - \w{x}_{H,0})  \right\|_2^2 + \langle \w{R} \nabla f_h(\w{x}_{h,k}),  \w{x}_H -
\w{x}_{H,0} 
\rangle\right\rbrace.
\end{equation}
By \cref{coarse d_H} and  since \cref{variable metric} has a closed form solution, we obtain 
 \begin{equation*} 
\begin{split}
   \hat{\w{d}}_{H,k} &= \underset{\w{d}_H \in \mathbb{R}^n }{\operatorname{arg \ min}} \left\lbrace \frac{1}{2} \|
   \w{Q}_H^{1/2}  \w{d}_{H} \|_2^2 + 
 \langle \w{R} \nabla f_h (\w{x}_{h,k}), \w{d}_{H}  \rangle  \right\rbrace \\
  & = - \w{Q}_H^{-1} \w{R} \nabla f_h (\w{x}_{h,k}).
  \end{split}
 \end{equation*}
Further, by construction of the coarse direction in \cref{coarse d_h} we obtain the coarse direction 
\begin{equation} \label{coarse direction d_h}
 \hat{\w{d}}_{h,k} = - \w{P}\w{Q}_H^{-1} \w{R} \nabla f_h (\w{x}_{h,k}).
\end{equation}
Note that if we naively set $n=N$ and $\w{P} = \w{I}_{N \times N}$ we obtain exactly 
equation \cref{variable metric}.

\subsection{The Galerkin Model} \label{sec galerkin model}

Here we present the Galerkin model which will be used to provide improved convergence results.
The Galerkin model was introduced in multigrid algorithms for optimization in \cite{MR2587737}  where it was experimentally tested and found to compare favorably with other methods when constructing coarse-grained models. The Galerkin model can be considered as a special case of the coarse model \cref{coarse model with f_H} under a specific choice of the matrix $\w{Q}_H$. In particular, let $\w{Q}_H$ be as follows
\begin{equation} \label{definition Q_H}
 \w{Q}_H(\w{x}_{h,k}) := \w{R} \nabla^2 f_h(\w{x}_{h,k}) \w{P}.
\end{equation}
Below we present the Galerkin model and show links with the (randomized) Newton method (see also \cite{MR2587737, ho2019newton}). We start by showing the positive definiteness of $\w{Q}_H$.
\begin{lemma} \label{proposition Q_H pd}
{Let $f_h : \mathbb{R}^N \rightarrow \mathbb{R}$ satisfy \cref{ass self-conc} and suppose that \cref{assumption P} also holds. Then, the matrix $\w{Q}_H(\w{x}_{h, k})$ is positive definite.} 
\end{lemma}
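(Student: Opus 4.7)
The plan is short and direct, since the result follows essentially from Assumption~\ref{assumption P} (with $\sigma=1$) together with the strict convexity assumed in Assumption~\ref{ass self-conc}.

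First I would verify symmetry of $\mathbf{Q}_H(\mathbf{x}_{h,k})$. Using $\mathbf{P} = \mathbf{R}^T$ and the symmetry of the Hessian of a twice-differentiable function, we have
\begin{equation*}
\mathbf{Q}_H(\mathbf{x}_{h,k})^T = \mathbf{P}^T \nabla^2 f_h(\mathbf{x}_{h,k})^T \mathbf{R}^T = \mathbf{R}\, \nabla^2 f_h(\mathbf{x}_{h,k})\, \mathbf{P} = \mathbf{Q}_H(\mathbf{x}_{h,k}),
\end{equation*}
so $\mathbf{Q}_H$ is symmetric and the notion of positive-definiteness is well-posed.

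Next I would establish positive-definiteness by a direct quadratic-form argument. For any nonzero $\mathbf{v} \in \mathbb{R}^n$,
\begin{equation*}
\mathbf{v}^T \mathbf{Q}_H(\mathbf{x}_{h,k}) \mathbf{v} = \mathbf{v}^T \mathbf{R}\, \nabla^2 f_h(\mathbf{x}_{h,k})\, \mathbf{P}\, \mathbf{v} = (\mathbf{P}\mathbf{v})^T \nabla^2 f_h(\mathbf{x}_{h,k})(\mathbf{P}\mathbf{v}).
\end{equation*}
Since Assumption~\ref{assumption P} guarantees that $\mathbf{P}$ has full column rank $n$, the map $\mathbf{v}\mapsto \mathbf{P}\mathbf{v}$ has trivial kernel, so $\mathbf{P}\mathbf{v}\neq \mathbf{0}$ whenever $\mathbf{v}\neq \mathbf{0}$. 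Strict convexity of $f_h$ together with its $\mathcal{C}^2$ regularity (implicit in Assumption~\ref{ass self-conc}, and standard for self-concordant functions as summarized in \cref{sec prel}) yields $\nabla^2 f_h(\mathbf{x}_{h,k}) \succ 0$, so $(\mathbf{P}\mathbf{v})^T \nabla^2 f_h(\mathbf{x}_{h,k})(\mathbf{P}\mathbf{v}) > 0$. Therefore $\mathbf{v}^T \mathbf{Q}_H(\mathbf{x}_{h,k}) \mathbf{v} > 0$ for all $\mathbf{v}\neq \mathbf{0}$, which is the desired conclusion.

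There is no real obstacle here; the only subtlety is keeping track of the two hypotheses that are doing the work: strict convexity is what delivers $\nabla^2 f_h \succ 0$ on the full space $\mathbb{R}^N$, while the full column rank of $\mathbf{P}$ is what ensures that positive-definiteness is not lost when the Hessian is compressed to the $n$-dimensional coarse space. If either hypothesis were dropped, one would in general only be able to conclude $\mathbf{Q}_H \succeq 0$, which would be insufficient for the subsequent use of $\mathbf{Q}_H^{-1}$ in defining the coarse direction in \cref{coarse direction d_H,coarse direction d_h}.
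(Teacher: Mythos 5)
Your proposal is correct and is essentially the paper's own argument: the paper's proof is a one-line appeal to ``linear algebra using \cref{assumption P} and $\nabla^2 f_h(\w{x}_{h}) \succ 0$'', and your quadratic-form computation $\w{v}^T \w{Q}_H \w{v} = (\w{P}\w{v})^T \nabla^2 f_h(\w{x}_{h,k})(\w{P}\w{v}) > 0$ together with the full-column-rank and symmetry observations is exactly that argument written out in detail.
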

\begin{proof}
This is a direct result of \cref{assumption P} and $\nabla^2 f_h(\w{x}_{h}) \succ 0$.
\end{proof}
Using the definition \cref{definition Q_H} in the coarse model \cref{coarse model with f_H} one can obtain the Galerkin model
\begin{align} \label{eq galerking model} 
\underset{\w{x}_H \in \mathbb{R}^n }{\operatorname{min}} \psi_H (\w{x}_H) := 
  \frac{1}{2} 
\left\| [\w{Q}_H(\w{x}_{h,k})]^{1/2} (\w{x}_H - \w{x}_{H,0})  \right\|_2^2 +  \langle \w{R} \nabla f_h(\w{x}_{h,k}),  \w{x}_H - \w{x}_{H,0} 
\rangle,
\end{align}
where, by \cref{proposition Q_H pd}, the Galerkin model satisfies \cref{ass self-conc}, and since $\w{Q}_H(\w{x}_{h,k})$ is invertible, \cref{eq galerking model} has a closed form solution. That is, we can derive $\hat{\w{d}}_{H,k}$ and $\hat{\w{d}}_{h,k}$ as is shown below
\begin{equation} \label{galerkin d_H}
 \hat{\w{d}}_{H,k} = - [\w{R} \nabla^2 f_h(\w{x}_{h,k}) \w{P}]^{-1} \w{R} \nabla f_h (\w{x}_{h,k}),
\end{equation}
and then we prolongate $\hat{\w{d}}_{H,k}$ to obtain the coarse direction
\begin{equation} \label{galerkin d_h}
 \hat{\w{d}}_{h,k} := - \w{P} \hat{\w{d}}_{H,k} 
 = - \w{P} [\w{R} \nabla^2 f_h(\w{x}_{h,k}) \w{P}]^{-1} \w{R} \nabla f_h (\w{x}_{h,k}).
\end{equation}
Observe also that \cref{galerkin d_H} is equivalent to solving the following linear system
\begin{equation} \label{d_h linear system}
 \w{Q}_H(\w{x}_{h,k}) \w{d}_{H} = - \w{R} \nabla f_h (\w{x}_{h,k}),
\end{equation}
which, by the positive-definiteness of $\w{Q}_H(\w{x}_{h,k})$, has a unique solution. At first glance it may seem that computing $\hat{\w{d}}_{h,k}$ may require the computation of the full Hessian matrix. However, we discuss in \cref{remark Q_H nystrom} how the computation of $\w{Q}_H(\w{x}_{h,k})$ can be done in $\mathcal{O}(N)$ operations (in parallel), and thus computing $\hat{\w{d}}_{h,k}$ requires $\mathcal{O}(n^3)$ operations. Using the Galerkin model in \cref{eq galerking model} one ensures that
\begin{equation*}
    \nabla^2 \psi_H (\w{x}_{H,0})  = \w{R} \nabla^2 f_h (\w{x}_{h,k})  \w{P},
\end{equation*}
i.e., the second-order coherency condition is satisfied. 
Similar to \cref{coarse direction d_h}, for $\w{P} = \w{I}_{N \times N}$ we obtain the Newton direction. If $\w{P}$ is a random matrix we obtain a randomized Newton method. 

\subsection{The Nystr\"om method} \label{sec nystrom intro}

In this section, we discuss the Nystr\"om method for the low-rank approximation of a positive-definite matrix. 
We show connections with the Galerkin model, and how to construct the prolongation and restriction operators. The Nystr\"om method builds a rank-$n$ approximation of a positive definite matrix $\w{A} \in \mathbb{R}^{N \times N}$ as follows (see \cite{drineas2005nystrom} for an introduction and \cite{ho2019newton} for a similar analysis)  
\begin{equation} \label{eq: nystrom approx}
    \w{A} \approx \w{A}_n = \w{A} \w{Y} \left( \w{Y}^T \w{A} \w{Y} \right)^{-1} \w{Y}^T \w{A},
\end{equation}
where $\w{Y} \in \mathbb{R}^{N \times n}$, $\operatorname{rank}(\w{Y}) = n < N$. To see the connection between the Galerkin model and the Nystr\"om method set $\w{A} = \nabla^2 f(\w{x}_{h,k})$, $\w{Y} = \w{P}$, in \cref{eq: nystrom approx} and multiply left and right with $ \left[ \nabla^2 f(\w{x}_{h,k}) \right]^{-1}$,  respectively. Then,
\begin{equation*}
    \left[ \nabla^2 f(\w{x}_{h,k}) \right]^{-1} \approx \w{P} \left( \w{R} \nabla^2 f(\w{x}_{h,k}) \w{P} \right)^{-1} \w{R}.
\end{equation*}
Thus, when $\w{P} \left( \w{R} \nabla^2 f(\w{x}_{h,k}) \w{P} \right)^{-1} \w{R}$ is a good approximation of $\left[ \nabla^2 f(\w{x}_{h,k}) \right]^{-1}$ then we expect $\w{d}_{h,k} \approx \w{\hat{d}}_{h,k}$, i.e., the coarse direction based on the Galerkin model that is generated through the Nystr\"om  method is a good approximation of the Newton direction. For random sampling techniques dedicated on the choice of $\w{P}$, see \cite{gittens2011spectral, NIPS2000_19de10ad, smola2000sparse}. {In this work we are interested in cases were samples are drawn according to some discrete probability distribution $\w{p}$. Formally, we construct the prolongation and restriction operators as described below.} 
\begin{definition} \label{def P}
Let $S_N = \left\lbrace 1,2, \ldots, N \right\rbrace $ and denote $S_n \subset S_N$, with the property that the $n < N$ elements are {randomly}
selected from $S_N$ without replacement {according to $\w{p}$}. Further, assume that $s_i$ is the $i^{\text{th}}$ element of $S_n$. Then the prolongation operator 
$\w{P}$ is generated as follows: The $i^{\text{th}}$ column of $\w{P}$ is the $s_i$ column of $\w{I}_{N \times N}$ and, further, it holds that 
$\w{R} = \w{P}^T$.
\end{definition}

{The above definition of $\w{P}$ and $\w{R}$ clearly satisfies \cref{assumption P}. If we consider iteration-dependent operators, $\w{R}_k$, then, effectively, the above definition indicates that the scheme in \eqref{gama scheme} will update $n$ from $N$ entries of $\w{x}_{h,k}$ at each iteration. The selection will be decided according to the assigned probability distribution. The simplest way to construct $\w{P}$ is by using uniform sampling. In this case all entries of $\w{x}_{h,k}$ have the same probability to be selected. However, although the uniform distribution is preferred in most randomized methods (see for instance \cite{ho2019newton, gower2019rsn, hanzely2020stochastic, NIPS2015_404dcc91, byrd2011use}), in this work we consider alternative sampling techniques. Let $p_{i, k}, i = 1, \ldots, N$ be the probability of selecting the $s_i$ column of $\w{I}_{N \times N}$, such that $\sum_{i=1}^{N} p_i = 1$. Below we specify the three different sampling strategies that we consider in this paper.
\begin{itemize}
    \item \emph{Uniform} distribution: $p_{i, k} = \frac{1}{N}$,  which ensures that all entries of $\w{x}_{h,k}$ will be updated equally. In this case the Nystr\"om method is commonly referred to as \emph{naive}.
    \item \emph{Adaptive} distribution: $p_{i, k} = \frac{|g_i|}{\sum_{i=1}^N |g_i|}$, 
    where $g_i = \frac{\partial{f(\w{x}_{h,k})}}{\partial{x_i}}$. Using the adaptive distribution we assign larger probability to coordinates whose partial derivatives are large. Thus, the entries of $\w{x}_{h,k}$ for which $g_i = 0$ will never be selected.
    \item \emph{Mixed} distribution: $p_{i, k} = (1-\tau) \frac{1}{N} +  \tau\frac{|g_i|}{\sum_{i=1}^N |g_i|}$, where $\tau \in (0,1)$. The mixed strategy effectively interpolates between the uniform and adaptive schemes. 
\end{itemize}
The idea of adaptive sampling strategies has been proposed previously to show significant improvements in convergence rates of coordinate descent methods \cite{perekrestenko2017faster, flamary2015importance}. Here, we will use the adaptive and mixed strategies to significantly improve the theoretical and numerical results of multilevel or subspace methods that rely on a uniform sampling strategy.}

Moreover, 
when $\w{P}$ is chosen randomly then the direction $\hat{\w{d}}_{h,k}$ computed in \cref{galerkin d_h} is also random. Thus, 
the randomness of $\w{P}$ will imply randomness in the step-size $t_{h,k}$ and the iterates $\w{x}_{h,k}$ \cref{gama scheme}. If at iteration k the 
prolongation matrix $\w{P}_k$ is used, then the algorithm generates a sequence of random variables 
$M_k(\omega) = (\w{x}_{h,k}(\omega), t_{h,k}(\omega),   \hat{\w{d}}_{h,k}(\omega), \w{P}_k(\omega))$. We use $\mathcal{F}_k := 
\sigma(M_0, M_1, \ldots, M_k)$ to denote the $\sigma$-algebra generated by \cref{gama scheme} up to iteration k. Below we describe how to efficiently compute the reduced Hessian matrix (\cref{definition Q_H}) given the above definition.

\begin{remark} \label{remark Q_H nystrom}
We note that it is expensive to first compute the Hessian matrix and then form the reduced Hessian matrix $\w{Q}_H(\w{x}_{h,k})$ as it requires 
$\mathcal{O}(N^2)$ operations. Instead, using the definition of $\w{P}$, one may first compute the product $\nabla f(\w{x})^T \w{P} $ 
by sampling $n$ from $N$ entries of the gradient vector. Then, for $\w{P} = [\w{p}_1 \ \w{p}_2 \cdots \w{p}_n]$, computing the gradient of 
$\nabla f(\w{x})^T \w{p}_i, i = 1, \ldots, n $ requires $\mathcal{O}(N)$ operations and thus, in total, $\mathcal{O}(nN)$ operations are required to 
compute the product $ \tilde{\w{P}} := \nabla^2 f(\w{x}) \w{P} $.  We then form the $n \times n$ reduced Hessian matrix in equation 
\cref{definition Q_H} by sampling rows of the matrix $\tilde{\w{P}}$ according to \cref{def P}. Therefore, the total per-iteration cost to form 
the reduced Hessian matrix will be $\mathcal{O}(nN)$. In addition, solving the linear system of equation in \cref{d_h linear system} to obtain the coarse direction requires $\mathcal{O}(n^3)$ operations. Further, note that, for $ i = 1, \ldots, n $ the computation of $ \nabla^2 f(\w{x}) \w{p}_i$ can be carried out independently resulting in $\mathcal{O}(N)$ operations required for computing the reduced Hessian matrix which is the same as the complexity of forming the gradient. As a result, the total per-iteration cost of the proposed method using the naive Nystr\"om method will be $\mathcal{O}(n^3 + (1+n)N) \approx  \mathcal{O}(n^3 +nN)$ or $\mathcal{O}(n^3 + 2N)$ when performing the computations in parallel.
{ The technique described above assumes the use of an automatic differentiation routine to compute the Hessian-vector products. It is explained in detail in \cite{christianson1992automatic}.}
\end{remark}

Building a rank-$n$ Hessian matrix approximation using the Nystr\"om method provides an inexpensive way for constructing the coarse direction in \cref{galerkin d_h}, as it performs sampling without replacement, which significantly reduces the total computational complexity (recall that the total per-iteration cost of the Newton method is $\mathcal{O}(N^2 + N^3)$). 
In addition, in terms of complexity, the proposed method has two main advantages compared to sub-sampled Newton methods. Firstly, sub-sampled Newton methods require approximately $\mathcal{O}(N^3)$ operations for computing the search direction which means that, when $N$ is large, {sub-sampled methods have} the same limitations as those of the conventional Newton method. 
Secondly, they offer improved complexity of iterates when the objective function is written as a sum  of functions. On the other hand, SIGMA is able to overcome the limitations associated with the Hessian matrix since the computations are performed in the coarse level, and, in addition, it offers improved complexity of iterates without requiring the objective to be written as a sum of functions. The aforementioned advantages significantly increase the applicability of multilevel methods in comparison to the sub-sampled or sketch Newton methods in machine learning and large-scale optimization problems. 

The convergence analysis with $\w{P}$ constructed as in \cref{def P} and the three sampling regimes is given in \cref{sec nystrom}. The above definition of the prolongation operator will be used for the numerical experiments in \cref{sec experiments}. 

\subsection{Fine Search Direction} \label{sec fine search}

As discussed in previous section, condition \cref{fine step euclidean conditions} guarantees the progress of the multilevel method by using the 
fine direction $\w{d}_{h,k}$ in place of the coarse direction $\hat{\w{d}}_{h,k}$ when the latter appears to be ineffective. Since in this work we 
consider self-concordant functions we propose alternative conditions to \cref{fine step euclidean conditions}. In particular, we replace the standard Euclidean norm with the norms defined by the matrices $\w{Q}_H(\w{x}_{h,k})$ and $\nabla^2 f_h(\w{x}_{h,k})$. We begin by defining the approximate decrement, a quantity analogous to the Newton decrement in 
\cref{newton decrement}
\begin{equation} \label{gama decrement}
\begin{split}
 \hat{\lambda}_{f_{h}}(\w{x}_{h,k}) & := \left[ (\w{R} \nabla f_h (\w{x}_{h,k}))^T [\w{Q}_H(\w{x}_{h,k})]^{-1} \w{R} \nabla 
 f_h (\w{x}_{h,k}) \right]^{1/2}.\\
 \end{split}
\end{equation}
We clarify that for the rest of this paper, unless specified differently, we denote the fine direction be the Newton direction, 
$\w{d}_{h,k} = - [\nabla^2 f_h(\w{x}_{h,k})]^{-1}  \nabla f_h (\w{x}_{h,k}) $, and, in addition, for simplification, we omit the subscript $f_h$ from 
both approximate and Newton decrements. Note also that the approximate and Newton decrements can be rewritten as
\begin{equation} \label{definition lamdahat and lamda}
 \hat{\lambda}(\w{x}_{h,k}) := \left\| \w{R} \nabla f_h (\w{x}_{h,k})  \right\|_{[\w{Q}_H(\w{x}_{h,k})]^{-1}} \ \ \& \ \
 \lambda(\w{x}_{h,k}) := \left\| \nabla f_h (\w{x}_{h,k}) \right\|_{[\nabla^2 f_h(\w{x}_{h,k})]^{-1}},
\end{equation}
respectively, where, by positive-definiteness of $\w{Q}_H(\w{x}_{h,k})$ and $\nabla^2 f_h(\w{x}_{h,k})$, both norms are well-defined and they 
serve the same purpose as with $\left\| \w{R} \nabla f_h (\w{x}_{h,k})  \right\|_2$
and $\left\| \nabla f_h (\w{x}_{h,k})  \right\|_2$, respectively. The new conditions are presented below as our main assumption and they are useful 
when minimizing self-concordant functions.
\begin{condition} \label{assumption lamda}
 {Let $\mu \in (0,1)$ and $\nu \in (0,1)$. The iterative scheme \cref{gama scheme} employs the coarse direction \cref{galerkin d_h}
if
\begin{equation*} 
 \hat{\lambda}(\w{x}_{h,k}) > \mu \lambda(\w{x}_{h,k}) \quad \ \ \text{and} \quad \ \ \hat{\lambda}(\w{x}_{h,k}) > \nu.
\end{equation*}
}
\end{condition} 

The above condition is analogous to the original conditions \cref{fine step euclidean conditions} and thus it prevents the use of the coarse direction when $\hat{\lambda}(\w{x}_{h,k}) = 0$ while $\lambda(\w{x}_{h,k}) \neq 0$ and also when $\w{x}_{H,0} = \w{x}_{H}^*$. In multilevel methods $\mu$ is a user-defined parameter that determines whether the algorithm selects the coarse or the fine step. The following lemma gives insights on how to select $\mu$ such that the coarse direction is always performed as long as $\hat{\lambda}(\w{x}_{h,k})$ is positive.

\begin{lemma} \label{prop select mu}
Suppose $\hat{\lambda}(\w{x}_{h,k}) > \nu $ for some $\nu \in (0,1)$. Then, for any $\mu \in (0, \min \{ 1, \frac{\nu}{\lambda(\w{x}_{h,0})} \} )$ we have that
\begin{equation*}
    \hat{\lambda}(\w{x}_{h,k}) > \mu \lambda(\w{x}_{h,k}),
\end{equation*}
for any $k \in \mathbb{N}$.
\end{lemma}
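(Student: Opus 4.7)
The plan is to reduce the claim to a monotone-style bound on the Newton decrement. Since $\hat{\lambda}(\w{x}_{h,k}) > \nu$ holds by hypothesis, the conclusion $\hat{\lambda}(\w{x}_{h,k}) > \mu \lambda(\w{x}_{h,k})$ will follow once one shows $\mu \lambda(\w{x}_{h,k}) \leq \nu$. Rearranging and using the explicit upper bound $\mu < \nu/\lambda(\w{x}_{h,0})$ built into the hypothesis on $\mu$, this sufficient condition is in turn implied by $\lambda(\w{x}_{h,k}) \leq \lambda(\w{x}_{h,0})$. Hence the entire lemma boils down to a single non-increase property of the Newton decrement along the iterates generated by \cref{gama scheme}.

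To prove $\lambda(\w{x}_{h,k}) \leq \lambda(\w{x}_{h,0})$, I would proceed by induction on $k$. The base case is trivial. For the inductive step I would first note that $\hat{\w{d}}_{h,k}$ is a descent direction, since \cref{gama decrement properties}(i) gives $-\nabla f_h(\w{x}_{h,k})^T \hat{\w{d}}_{h,k} = \hat{\lambda}(\w{x}_{h,k})^2 > 0$; combined with the damped step size $t_{h,k}$ of the scheme, this guarantees $f_h(\w{x}_{h,k+1}) \leq f_h(\w{x}_{h,k})$, so the iterates remain in the initial sublevel set. I would then combine the Hessian bounds of \cref{proposition f} with the integral identity $\nabla f_h(\w{x}_{h,k+1}) = \nabla f_h(\w{x}_{h,k}) + \int_0^1 \nabla^2 f_h(\w{x}_{h,k} + s t_{h,k} \hat{\w{d}}_{h,k})\, t_{h,k} \hat{\w{d}}_{h,k}\, ds$ to derive a one-step recursion bounding $\lambda(\w{x}_{h,k+1})^2$ by a multiple of $\lambda(\w{x}_{h,k})^2$, with a contraction factor controlled by $\hat{\lambda}(\w{x}_{h,k})$ and $t_{h,k}$; the inductive hypothesis then propagates the bound to iteration $k+1$.

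The hardest part is precisely this inductive step. The self-concordant inequalities naturally produce the two-sided estimate $\omega(\lambda) \leq f_h - f_h^* \leq \omega_*(\lambda)$, but the two bounds sit on opposite sides of $f - f^*$ and cannot be composed to hand back monotonicity of $\lambda$ from monotonicity of $f_h$ alone. The argument must therefore leverage the specific damped rule $t_{h,k} = 1/(1+\hat{\lambda}(\w{x}_{h,k}))$ together with the two-sided Hessian control of \cref{proposition f} and \cref{proposition Q} quantitatively, rather than merely qualitatively, in order to extract a per-iteration contraction on $\lambda$. This is the point at which the multilevel structure and the self-concordant estimates developed earlier in the paper have to be used jointly.
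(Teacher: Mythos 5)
Your reduction is exactly the one in the paper: the paper's proof is the single chain $\hat{\lambda}(\w{x}_{h,k}) > \nu > \mu\,\lambda(\w{x}_{h,0}) \geq \cdots \geq \mu\,\lambda(\w{x}_{h,k})$, so it too rests on the non-increase of the Newton decrement along the iterates — except that the paper simply asserts $\lambda(\w{x}_{h,0}) \geq \cdots \geq \lambda(\w{x}_{h,k})$ and stops there, whereas you correctly single it out as the crux.

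The genuine gap is that your inductive step is only a plan, and as sketched it would not close. Descent plus Armijo gives monotone decrease of $f_h$, not of $\lambda$, as you yourself note; and the one-step recursion you propose (bounding $\lambda(\w{x}_{h,k+1})$ by a multiple of $\lambda(\w{x}_{h,k})$ via \cref{proposition f} and the integral identity) is precisely what the paper later derives in \cref{lemma417}, where the factor is $\bigl(e+\hat{\lambda}(\w{x}_{h,k})\bigr)/\bigl(1-\hat{\lambda}(\w{x}_{h,k})\bigr)^2$. That factor is below $1$ only when $\hat{\lambda}(\w{x}_{h,k})$ is below the threshold $\eta=\tfrac{3-\sqrt{5+4e}}{2}$, and the bound is obtained under $t_{h,k}=1$ and \cref{assumption lamda}; in the damped (global) phase, where $\hat{\lambda}(\w{x}_{h,k})$ may be large and $t_{h,k}<1$, no contraction of $\lambda$ follows from these self-concordance estimates, so the induction you outline cannot be completed by this route. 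In other words, the lemma as you set it up still needs a proof that the decrement does not increase under the specific step-size rule of \cref{gama scheme} (or a reformulation, e.g.\ replacing $\lambda(\w{x}_{h,0})$ by $\sup_k \lambda(\w{x}_{h,k})$ or restricting to the locally convergent phase). To be fair, the paper's own proof supplies no argument for this monotonicity either, so your diagnosis of where the difficulty lies is sound — but the proposal does not resolve it, and the step you flag as "hardest" is exactly the missing piece.
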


The above result verifies our intuition that coarse steps are more likely to be taken when $\mu $ is selected sufficiently small. Note that $\mu < \frac{\nu}{\lambda(\w{x}_{h,0})}$ is a sufficient condition such that the coarse step is always taken. Nevertheless it does not identify all the values of $\mu$ such that \cref{assumption lamda} holds. There might exist $\mu \in (\frac{\nu}{\lambda(\w{x}_{h,0})}, 1) $ such that $ \hat{\lambda}(\w{x}_{h,k}) > \mu \lambda(\w{x}_{h,k})$ remains true.
In addition, as a corollary of \cref{prop select mu}, one can select some $r \in \mathbb{N}$ and $ \mu < \frac{\nu}{\lambda(\w{x}_{h,r})}$ which ensures that the coarse step will be always taken for all $k \geq r$.

\begin{lemma} \label{lemma upper bound lamda_hat}
For any $k \in \mathbb{N}$ the approximate decrement  in \cref{gama decrement} is bounded as follows 
\begin{equation*} \label{ineq bounds lambda hat}
   \hat{\lambda}(\w{x}_{h,k}) \leq \lambda(\w{x}_{h,k}),
\end{equation*}
where $\lambda(\w{x}_{h,k})$ is the Newton decrement in \cref{newton decrement}.
\end{lemma}

The above result shows that $\hat{\lambda}(\w{x}_{h,k})$ can be as much as $\lambda(\w{x}_{h,k})$. Therefore if the user-defined parameter is selected larger than one then SIGMA will perform fine steps only. 

\section{SIGMA: Convergence Analysis} \label{sec analysis}

In this section we provide convergence analysis of SIGMA for strictly convex self-concordant functions. {Unless mentioned otherwise, throughout this section we assume that \cref{ass self-conc} and \cref{assumption P} always hold, and the method alternates between coarse and fine steps according to \cref{assumption lamda}.} Furthermore, we provide two theoretical results that hold \textbf{(i)} for any $\w{P}$ that satisfies \cref{assumption P} (see \cref{sec gen superlinear rate}), and \textbf{(ii)} when $\w{P}$ is selected randomly at each iteration as in \cref{def P}. {In the latter scenario, we show probabilistic convergence results as well as convergence in expectation (see \cref{sec nystrom})}. In both cases, we prove that SIGMA achieves a local super-linear convergence rate. The idea of the proof is similar to that of the classical Newton method where convergence is split into two phases. The full algorithm including a step-size strategy is specified in \cref{gama}. Technical proofs are relegated to the appendix.

\begin{remark} \label{remark gama}
We make an important remark regarding the practical implementation of \cref{gama}. Notice that checking the condition $\hat{\lambda}(\w{x}_{h,k}) > \mu \lambda(\w{x}_{h,k})$ is inefficient to perform at each iteration as it computes the expensive Newton decrement. One can compute the gradients using the Euclidean norms instead \cref{fine step euclidean conditions} as they serve the same purpose and are cheap to compute. Furthermore, \cref{prop select mu} suggests that when  $\mu$ is selected sufficiently small it is likely that only coarse steps will be taken. However, note that we make no assumptions about the coarse model beyond what has already been discussed until now. For example, the fine model dimension $N$ could be very large, while $n$ could be just a single dimension. In such an extreme case, performing only coarse directions will yield a slow progress of the multilevel algorithm and hence, to avoid slow convergence, larger value of $\mu$ will be required. In \cref{sec nystrom} we describe that, given $\w{P}_k$ is as in \cref{def P} and specific problem structures, SIGMA can reach solutions with high accuracy without ever using fine correction steps. Finally, for obtaining our theoretical results we assume the Newton direction as the fine direction, nevertheless, in practice, $\w{d}_{h,k}$ can be a direction arising from variable metric methods (see \cref{sec rel metric mthds}).
\end{remark}

\subsection{Globally Convergent First-Phase}

\begin{algorithm}[t]
\caption{SIGMA}
\label{gama}
\begin{algorithmic}[1]
\STATE Input: $\mu \in (0, 1) $ , $ \ \alpha \in (0,0.5), \ \beta \in (0,1), 
\ \nu \in (0,0.68^2), \ \epsilon \in (\nu,0.68^2), \ \w{P}_k \in \mathbb{R}^{n \times N}$, $\w{x}_{h,0} \in \mathbb{R}^N$  \\ 
\STATE Compute direction \\
\begin{align*}
 \w{d}_k & := 
\begin{cases}
\w{\hat{d}}_{h,k} \ \ \text{from} \ \  (\ref{galerkin d_h}) & \text{if} \ \ \hat{\lambda}(\w{x}_{h,k}) > \mu \lambda(\w{x}_{h,k}) \ \ 
\text{and} \ \  \hat{\lambda}(\w{x}_{h,k}) > \nu \\
\w{d}_{h,k} \ \ \text{from} \ \  (\ref{variable metric}) & \text{otherwise},
\end{cases}
\end{align*}
\STATE Quit if $ - \langle \nabla f_{h,k}(\w{x}_{h,k}), \w{d}_k \rangle  \leq \epsilon \ \  $
\STATE Armijo search: while $f_h(\w{x}_{h,k} + t_k \w{d}_k) > f_h(\w{x}_{h,k}) + \alpha t_{h,k} \nabla f^T_{h,k}(\w{x}_{h,k}) \w{d}_k, \quad t_{h,k} \leftarrow \beta t_{h,k}$
\STATE Update: $  \w{x}_{h,k+1} := \w{x}_{h,k} + t_{h,k} \w{d}_k $, go to 2
\STATE Return $\w{x}_{h,k}$
\end{algorithmic}
\end{algorithm}

We begin by showing reduction in the value of the objective function of \cref{gama} when the backtracking line search (Armijo-rule) is satisfied. We emphasize that this result is global. The idea of the proofs in the following lemmas are parallel with those in \cite{MR2061575, MR2142598}.

\begin{lemma} \label{lemma411}
For any $ \eta> 0$ there exists $\gamma > 0$ such that for any 
$k \in \mathbb{N}$ with $\hat{\lambda}(\w{x}_{h,k}) > \eta \ $ the coarse direction $\w{\hat{d}}_{h,k}$ will yield the following reduction in the value of the objective function
\begin{equation*}
 f_h(\w{x}_{h,k} + t_{h,k} \w{\hat{d}}_{h,k}) - f_h(\w{x}_{h,k}) \leq -\gamma.
\end{equation*}
\end{lemma}

We proceed by estimating the sub-optimality gap. In particular, we  show it can be bounded in terms of the approximate decrement.

\begin{lemma} \label{lemma412}
Let $\hat{\lambda}(\w{x}_{h,k}) < 1$. Then, 
\begin{equation*}
\omega(\hat{\lambda}(\w{x}_{h,k})) \leq f_h(\w{x}_{h,k}) - f_h(\w{x}_h^*) \leq \omega_*(\hat{\lambda}(\w{x}_{h,k})),
\end{equation*}
where the mappings $\omega$ and $\omega_*$ are defined as in \cref{omegas}. 
\end{lemma}

The above result is similar to \cite[Theorem 4.1.11]{MR2142598} but with $ \hat{\lambda}(\w{x}_{h,k})$ in place of $\lambda(\w{x}_{h,k})$. Alternatively, similar to the analysis in \cite{MR2061575}, the sub-optimality gap can be given as follows.

\begin{lemma} \label{lemma suboptimality gap}
If $\lambda(\w{x}_{h,k}) \leq 0.68$, then
\begin{align*}
 f_h(\w{x}_{h,k}) - f_h(\w{x}_h^*)  \leq \lambda(\w{x}_{h,k})^2. 
\end{align*}
\end{lemma}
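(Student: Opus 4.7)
The plan is to reuse the machinery already developed for \cref{lemma412}, but with the Newton direction in place of the coarse direction, and then close with an elementary one-variable inequality between $\omega_*$ and the square. The key observation is that \cref{lemma412} treated the coarse direction $\hat{\w{d}}_{h,k}$ only because that is what appears in SIGMA; the underlying self-concordance arguments (\cref{proposition phi}) apply equally to any descent direction along which the restricted univariate function is strictly convex and self-concordant, in particular to the Newton direction.

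First I would set $\phi(t) := f_h(\w{x}_{h,k} + t\w{d}_{h,k})$ with $\w{d}_{h,k} = -[\nabla^2 f_h(\w{x}_{h,k})]^{-1}\nabla f_h(\w{x}_{h,k})$, so that by direct computation (as in the preamble to \cref{lemma411}) we have $\phi'(0) = -\lambda(\w{x}_{h,k})^2$ and $\phi''(0) = \lambda(\w{x}_{h,k})^2$. Then I would apply exactly the upper-bound half of the argument in \cref{lemma412}: \cref{proposition phi}\ref{proposition phi case i} gives $\phi(t) \leq \phi(0) - t\lambda(\w{x}_{h,k})^2 - t\lambda(\w{x}_{h,k}) - \log(1 - t\lambda(\w{x}_{h,k}))$, valid for $t < 1/\lambda(\w{x}_{h,k})$; minimizing this upper bound in $t$ and using $f_h(\w{x}_h^*) \leq \inf_{t\geq 0}\phi(t)$ yields
\begin{equation*}
 f_h(\w{x}_{h,k}) - f_h(\w{x}_h^*) \leq -\lambda(\w{x}_{h,k}) - \log(1 - \lambda(\w{x}_{h,k})) = \omega_*(\lambda(\w{x}_{h,k})),
\end{equation*}
provided $\lambda(\w{x}_{h,k}) < 1$. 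Under the hypothesis $\lambda(\w{x}_{h,k}) \leq 0.68$ this is certainly fine.

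Second, I would prove the scalar inequality $\omega_*(x) \leq x^2$ for $x \in [0, 0.68]$. Expanding the logarithm gives the convergent series $\omega_*(x) = \sum_{k\geq 2} x^k/k$, so $\omega_*(x)/x^2 = \sum_{k\geq 2} x^{k-2}/k$ is nonnegative and monotonically nondecreasing on $[0,1)$. Hence it suffices to check the endpoint: $\omega_*(0.68) = -0.68 - \log(0.32) \approx 0.459 \leq 0.4624 \approx (0.68)^2$. Combining this with the previous display yields the desired bound $f_h(\w{x}_{h,k}) - f_h(\w{x}_h^*) \leq \lambda(\w{x}_{h,k})^2$.

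There is no real obstacle here; the lemma is essentially a specialization of \cref{lemma412} to the Newton direction together with a one-line numerical check. The only care needed is to note that the Newton direction is well-defined and the induced $\phi$ is a strictly convex self-concordant univariate function (both guaranteed by \cref{ass self-conc} together with strict convexity of $f_h$), so that \cref{proposition phi} is applicable.
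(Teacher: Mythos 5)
Your plan — rerun the argument of \cref{lemma412} along the Newton direction and close with the scalar estimate $\omega_*(x)\le x^2$ on $[0,0.68]$ — is a legitimate alternative to the paper's route, which instead keeps \cref{lemma412} as proved (in terms of $\hat{\lambda}$), combines it with \cref{lemma upper bound lamda_hat} ($\hat{\lambda}(\w{x}_{h,k})\le\lambda(\w{x}_{h,k})$) and the monotonicity of $\omega_*$, and then applies the same numerical bound. The final step of your write-up ($\omega_*(x)\le x^2$ for $x\in[0,0.68]$) is correct and is exactly the paper's closing estimate.

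However, the middle step is wrong as written. You invoke part \textit{\ref{proposition phi case i}} of \cref{proposition phi} (the \emph{upper} bound on $\phi$) together with $f_h(\w{x}_h^*)\le\inf_{t\ge0}\phi(t)$; these two facts control $f_h(\w{x}_h^*)$ from \emph{above}, not below. Indeed, minimizing your bound $\phi(0)-t\lambda^2-t\lambda-\log(1-t\lambda)$ gives $t^*=1/(1+\lambda)$ and the value $\phi(0)-\lambda+\log(1+\lambda)=\phi(0)-\omega(\lambda)$, not $\phi(0)-\lambda-\log(1-\lambda)$; so the steps you state yield $f_h(\w{x}_{h,k})-f_h(\w{x}_h^*)\ge\omega(\lambda(\w{x}_{h,k}))$, i.e.\ the \emph{lower} half of the sandwich in \cref{lemma412}, which does not prove the lemma. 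To get the upper bound on the gap you need part \textit{\ref{proposition phi case ii}} (the lower bound on $\phi$): minimizing it over $t\ge0$ gives $\inf_{t\ge0}\phi(t)\ge\phi(0)+\lambda+\log(1-\lambda)=\phi(0)-\omega_*(\lambda)$ for $\lambda<1$, and then, as in the proof of \cref{lemma412} (or Nesterov's Theorem 4.1.13), one must argue that this lower bound holds along an arbitrary direction — in particular the direction from $\w{x}_{h,k}$ to $\w{x}_h^*$ — so that $f_h(\w{x}_h^*)\ge f_h(\w{x}_{h,k})-\omega_*(\lambda(\w{x}_{h,k}))$; a bound along the Newton ray alone does not control the global minimum, and the inequality $f_h(\w{x}_h^*)\le\inf_{t\ge0}\phi(t)$ points the wrong way for this purpose. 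Alternatively, the shortest repair is the paper's own: quote the upper bound of \cref{lemma412} with $\hat{\lambda}(\w{x}_{h,k})$, then use \cref{lemma upper bound lamda_hat} and the monotonicity of $\omega_*$ before the numerical check.
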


As a result, $\lambda(\w{x}_{h,k})^2$ can be used as an exit condition of \cref{gama}. In addition, with similar arguments, one can show that for any $\hat{\lambda}(\w{x}_{h,k}) \leq 0.68$ it holds $f_h(\w{x}_{h,k}) - f_h(\w{x}_h^*)  \leq \hat{\lambda}(\w{x}_{h,k})^2$.  To this end, we use $\hat{\lambda}(\w{x}_{h,k})^2 < \epsilon$ whenever the coarse direction is performed, and $\lambda(\w{x}_{h,k})^2 < \epsilon$, otherwise, to guarantee that on exit $f_h(\w{x}_{h,k}) - f_h(\w{x}_h^*) \leq \epsilon$, for some tolerance $\epsilon \in (0, 0.68^2)$.

\subsection{\blue{Quadratic Convergence Rate on the Coarse Subspace}}

In this section we show that the coarse model achieves a local quadratic convergence rate. We start with the next lemma in which we examine the required 
condition for \cref{gama} to accept the unit step. 

\begin{lemma} \label{lemma415}
Suppose that the coarse direction, $\hat{\w{d}}_{h,k}$, is employed. If 
\begin{equation*}
 \hat{\lambda}(\w{x}_{h,k}) \leq \frac{1}{2} (1 - 2 \alpha), 
\end{equation*}
where $\alpha \in (0, 1/2)$, then \cref{gama} accepts the unit step, $t_{h,k} = 1$.
\end{lemma}

Using the lemma above we shall now prove quadratic convergence of the coarse model onto the subspace spanned by the columns of $\w{R}$. The next result shows quadratic convergence when coarse steps are always taken.

\begin{lemma} \label{thm422}
Let $\lambda(\w{x}_{h,k})<1$.
 Suppose that the sequence $(\w{x}_{h,k} )_{k \in \mathbb{N}}$ is generated by \cref{gama} and $t_{h,k} = 1$. 
 Suppose also that the coarse direction, $\w{\hat{d}}_{h,k}$, is employed. Then,
 \begin{equation*}
  \hat{\lambda}(\w{x}_{h,k+1}) \leq \left( \frac{\hat{\lambda}(\w{x}_{h,k})}{1 - \hat{\lambda}(\w{x}_{h,k})} \right)^2.
 \end{equation*}
\end{lemma}

According to \cref{thm422}, we can infer the following about the convergence rate of the coarse model: first, note that the root of 
$\lambda/(1-\lambda)^2=1$ can be found at $\lambda = \frac{3 - \sqrt{5}}{2} $.  Hence, we come up with an explicit expression about the region of quadratic 
convergence, that is, when $\hat{\lambda}(\w{x}_{h,k})< \frac{3 - \sqrt{5}}{2} $, we can guarantee that 
$\hat{\lambda}(\w{x}_{h,k+1})<\hat{\lambda}(\w{x}_{h,k})$ and specifically this process converges quadratically with
\begin{equation*}
 \hat{\lambda}(\w{x}_{h,k+1}) \leq \frac{\delta}{(1-\delta)^2}\hat{\lambda}(\w{x}_{h,k}),
\end{equation*}
for some $\delta \in (0, \lambda)$. \blue{Similar to \cref{thm422}, below we show a quadratic convergence rate of the coarse model when only fine steps are taken. 
\begin{lemma} \label{thm quadratic coarse through fine}
Let $\lambda(\w{x}_{h,k})<1$.
 Suppose that the sequence $(\w{x}_{h,k} )_{k \in \mathbb{N}}$ is generated by \cref{gama} and $t_{h,k} = 1$. 
 Suppose also that the fine direction, $\w{d}_{h,k}$, is employed. Then,
 \begin{equation*}
  \hat{\lambda}(\w{x}_{h,k+1}) \leq  \frac{\lambda(\w{x}_{h,0})}{\nu (1 - \lambda(\w{x}_{h,k}))^2} \hat{\lambda}(\w{x}_{h,k})^2,
 \end{equation*}    
  where $\nu$ is defined in \cref{assumption lamda}.
\end{lemma}
The following theorem summarizes the results of \cref{thm422} and \cref{thm quadratic coarse through fine}.
\begin{theorem} \label{thm: quadratic rate of coarse both cases}
    Let $\lambda(\w{x}_{h,k})<1$. Suppose that the sequence $(\w{x}_{h,k} )_{k \in \mathbb{N}}$ is generated by \cref{gama} with $t_{h,k} = 1$. Then we obtain reduction of the approximate decrement as follows:
    \begin{enumerate}[label=(\roman*)]
        \item if the coarse step is taken then 
         \begin{equation*}
  \hat{\lambda}(\w{x}_{h,k+1}) \leq \left( \frac{\hat{\lambda}(\w{x}_{h,k})}{1 - \hat{\lambda}(\w{x}_{h,k})} \right)^2,
 \end{equation*}
        \item if the fine step is taken then
         \begin{equation*}
  \hat{\lambda}(\w{x}_{h,k+1}) \leq  \frac{\lambda(\w{x}_{h,0})}{\nu (1 - \lambda(\w{x}_{h,k}))^2} \hat{\lambda}(\w{x}_{h,k})^2,
 \end{equation*}    
  where $\nu$ is defined in \cref{assumption lamda}.
    \end{enumerate}
\end{theorem}
}

\blue{\begin{proof}
    The proof of the theorem follows directly from \cref{thm422} and \cref{thm quadratic coarse through fine}.
\end{proof}}

\blue{\cref{thm: quadratic rate of coarse both cases} provide us with a description about the convergence of
$\|\nabla f_h(\w{x}_{h,k}) \|$ onto the space spanned by the rows of $\w{R}$. As such, in the next section we examine the convergence of $\|\nabla f_h(\w{x}_{h,k}) \|$
on the entire space $\mathbb{R}^N$. }

\subsection{Composite Convergence Rate \blue{on $\mathbb{R}^n$} } \label{sec gen superlinear rate}

In this section we study the convergence of SIGMA on $\mathbb{R}^N$ and specifically we establish its composite and super-linear convergence rate for $\hat{\lambda}(\w{x}_{h,k}) \leq \eta$, {for some} $\eta>0$. We start with the following auxiliary lemma that will be useful in the discussion regarding the convergence results which follows this section.

\begin{lemma} \label{lemma norm equality lambda's d's}
For any $k \in \mathbb{N}$ it holds that
\begin{equation*}
    \left\| \ [\nabla^2 f_h ({\w{x}_{h,k}})]^{1/2} \left(
			   \w{\hat{d}}_{h,k} - \w{d}_{h,k} \right) \right\|_2 = \sqrt{\lambda(\w{x}_{h,k})^2 - 
 \hat{\lambda}(\w{x}_{h,k})^2},
\end{equation*}
where $\w{\hat{d}}_{h,k}$ as in \cref{galerkin d_h} and $\w{d}_{h,k}$ is the Newton direction.
\end{lemma}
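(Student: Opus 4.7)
The plan is to square the left-hand side and expand it as a quadratic form in the Hessian, then identify each of the three resulting terms using the identities already collected in \cref{gama decrement properties}. Denote $\w{H} := \nabla^2 f_h(\w{x}_{h,k})$ for brevity. The squared norm on the left equals
\begin{equation*}
    (\hat{\w{d}}_{h,k} - \w{d}_{h,k})^T \w{H} (\hat{\w{d}}_{h,k} - \w{d}_{h,k})
    = \hat{\w{d}}_{h,k}^T \w{H} \hat{\w{d}}_{h,k} - 2\, \hat{\w{d}}_{h,k}^T \w{H} \w{d}_{h,k} + \w{d}_{h,k}^T \w{H} \w{d}_{h,k}.
\end{equation*}

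Next I would identify each term in closed form. The first term is $\hat{\lambda}(\w{x}_{h,k})^2$ by \cref{gama decrement properties}(ii). The cross term is also $\hat{\lambda}(\w{x}_{h,k})^2$ by \cref{gama decrement properties}(iii); this is the small algebraic surprise that makes the identity work, and it is what I would flag as the one step worth double-checking, since naively one might have expected a mixed quantity like $\lambda \hat{\lambda}$. The remaining term $\w{d}_{h,k}^T \w{H} \w{d}_{h,k}$ equals $\nabla f_h(\w{x}_{h,k})^T \w{H}^{-1} \nabla f_h(\w{x}_{h,k}) = \lambda(\w{x}_{h,k})^2$ by the definition of the Newton direction and of the Newton decrement in \cref{newton decrement}.

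Substituting these three evaluations, the cross terms collapse and the squared norm reduces to $\lambda(\w{x}_{h,k})^2 - \hat{\lambda}(\w{x}_{h,k})^2$. Taking square roots (which is legitimate since the left-hand side is nonnegative, and nonnegativity of the right-hand side follows from \cref{lemma upper bound lamda_hat}) yields the claimed equality. The whole argument is essentially a one-line computation once the three dot-product identities are in hand, so the only real work was already done in \cref{gama decrement properties}; the main conceptual point is that the cross term $\hat{\w{d}}_{h,k}^T \w{H} \w{d}_{h,k}$ picks up a factor $\hat{\lambda}^2$ rather than $\lambda^2$, which is precisely the Pythagorean-type identity reflecting that $\hat{\w{d}}_{h,k}$ is the $\w{H}$-orthogonal projection of $\w{d}_{h,k}$ onto the range of $\w{P}$.
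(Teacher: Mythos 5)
Your proof is correct and follows essentially the same route as the paper: expand the squared $\nabla^2 f_h(\w{x}_{h,k})$-norm, identify the three terms via \cref{gama decrement properties} and the definition of the Newton decrement, and note that the cross term contributes $\hat{\lambda}(\w{x}_{h,k})^2$, giving $\lambda(\w{x}_{h,k})^2-\hat{\lambda}(\w{x}_{h,k})^2$.
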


The next lemma constitutes the core of our theorem.

\begin{lemma} \label{lemma417}
{Let $\lambda(\w{x}_{h,k})<1$.}
 Suppose that the coarse direction, $\w{\hat{d}}_{h,k}$, is employed and, in addition, that the line search selects $t_{h,k} = 1$. Then,
 {\begin{equation*}
      \lambda(\w{x}_{h,k+1}) \leq  \frac{ \sqrt{\lambda(\w{x}_{h,k})^2 - 
 \hat{\lambda}(\w{x}_{h,k})^2}}{\left( 1 - \hat{\lambda}(\w{x}_{h,k}) \right)^2} + \frac{\hat{\lambda}(\w{x}_{h,k})}{\left( 1 - \hat{\lambda}(\w{x}_{h,k}) \right)^2}
   \lambda(\w{x}_{h,k}).
 \end{equation*}}
\end{lemma}

We now use the above result to obtain the two phases of the convergence of SIGMA when coarse steps are always taken. More precisely, the region of super-linear convergence is governed by $\eta := \frac{3 - \sqrt{5 + 4\varepsilon}}{2}$, where $\varepsilon := \sqrt{1 - \mu^2}$.

\begin{lemma} \label{thm418}
 Suppose that the sequence $(\w{x}_{h,k} )_{k \in \mathbb{N}}$ is generated by \cref{gama} and that the coarse direction, 
 $\w{\hat{d}}_{h,k}$, is employed. For any $\mu \in (0,1)$, there exist constants $\gamma >0$ and $\eta \in (0, \frac{3 - \sqrt{5}}{2})$ such that 
 \begin{enumerate}[label=(\roman*)]
  \item if $\hat{\lambda}(\w{x}_{h,k}) > \eta$, then \label{case1}
  \begin{equation*}
    f_h(\w{x}_{h,k+1}) - f_h(\w{x}_{h,k}) \leq -\gamma,
  \end{equation*}
  \item if $\hat{\lambda}(\w{x}_{h,k}) \leq \eta$, then \cref{gama} selects the unit step and \label{case2}
  \begin{align}
    \hat{\lambda}(\w{x}_{h,k+1}) & < \left( \frac{\hat{\lambda}(\w{x}_{h,k})}{1 - \hat{\lambda}(\w{x}_{h,k})} \right)^2 < \hat{\lambda}(\w{x}_{h,k}). \label{reduction lambdahat} 
   \end{align}
    We further have
   \begin{equation}
          \lambda(\w{x}_{h,k+1})  < \frac{\varepsilon + \hat{\lambda}(\w{x}_{h,k})}{\left( 1 - \hat{\lambda}(\w{x}_{h,k}) \right)^2} 
   \lambda(\w{x}_{h,k}) < \lambda(\w{x}_{h,k}). \label{reduction lambda}
   \end{equation}   
 \end{enumerate}
\end{lemma}

\begin{proof}

The result in the first phase, \ref{case1}, is already proved in  \cref{lemma411} and in particular it holds for $\gamma = \alpha \beta \frac{\eta^2}{1 + \eta}$. Further, for phase \ref{case2} of the algorithm, by \cref{lemma415} and for some $\eta \in (0, \frac{3 - \sqrt{5}}{2})$ we see that \cref{gama} selects the unit step. Additionally, \cref{thm422} guarantees reduction in the approximate decrement as required by inequality \cref{reduction lambdahat}, and specifically, this process converges quadratically.  {In addition, since the coarse direction is taken, $\hat{\lambda}(\w{x}_{h,k}) > \mu \lambda(\w{x}_{h,k})$. Applying this inequality to the result of \cref{lemma417} reads
\begin{equation*}
        \lambda(\w{x}_{h,k+1}) \leq  \frac{\varepsilon + \hat{\lambda}(\w{x}_{h,k})}{\left( 1 - \hat{\lambda}(\w{x}_{h,k}) \right)^2} 
   \lambda(\w{x}_{h,k}).
\end{equation*}
Thus, convergence is achieved if the fraction in the above inequality is less than one.
}
By assumption, $\hat{\lambda}(\w{x}_{h,k}) \leq \eta$ and since $x \rightarrow \frac{\varepsilon + x}{(1-x)^2}$ is monotone increasing we obtain
\begin{equation*}
        \lambda(\w{x}_{h,k+1}) \leq  \frac{\varepsilon + \eta}{\left( 1 - \eta \right)^2} 
   \lambda(\w{x}_{h,k}).
\end{equation*}
Setting $\eta = \frac{3 - \sqrt{5 + 4\varepsilon}}{2}$ we see that $(\varepsilon + \eta)/\left( 1 - \eta \right)^2 < 1$. Finally, $\mu \in (0,1)$ implies that $\varepsilon \in (0,1)$ and thus inequality \cref{reduction lambda} holds for $\eta \in (0, \frac{3 - \sqrt{5}}{2})$ which concludes the proof of the theorem. \qed

\end{proof}

According to \cref{thm418}, for some $\eta \in (0, \frac{3 - \sqrt{5}}{2})$, we can infer the following about the convergence of \cref{gama}: In the first phase, for $\hat{\lambda}(\w{x}_{h,k}) > \eta$, the objective function is reduced as  $ f_h(\w{x}_{h,k+1}) - f_h(\w{x}_{h,k}) \leq -\gamma$ and thus the number of steps of this phase is bounded by
$\frac{1}{\gamma} [f_h(\w{x}_{h, 0}) - f_h(\w{x}_{h}^*)]$. In the second phase, for $\hat{\lambda}(\w{x}_{h,k}) \leq \eta$ the reduction is given in \cref{reduction lambda}. It is easy to show that \cref{gama} obtains a local composite convergence rate: to see this, we combine \cref{lemma upper bound lamda_hat} and \cref{reduction lambda} to get
\begin{equation*}
        \lambda(\w{x}_{h,k+1}) \leq  \frac{\varepsilon + \hat{\lambda}(\w{x}_{h,k})}{\left( 1 - \hat{\lambda}(\w{x}_{h,k}) \right)^2} 
   \lambda(\w{x}_{h,k}) \leq  \frac{\varepsilon + \lambda(\w{x}_{h,k})}{\left( 1 - \lambda(\w{x}_{h,k}) \right)^2} 
   \lambda(\w{x}_{h,k}).
\end{equation*}

{The above result shows convergence of \cref{gama} when the coarse step is always taken. In the case where \cref{gama} alternates between coarse and fine (Newton) steps, then a combined convergence behavior is expected. The complete convergence behavior of \cref{gama} is summarized in the following theorem. Note that in this case the local region of the fast convergence rate will be given according to the magnitude of the Newton decrement}.

\blue{\begin{theorem} \label{corollary complete convergence}
     Suppose that the sequence $(\w{x}_{h,k} )_{k \in \mathbb{N}}$ is generated by \cref{gama}. There exist constants $\gamma_{\text{S}}, \gamma_\text{N} >0$ and $\bar{\eta} \in (0, \frac{3 - \sqrt{5}}{2})$ such that 
 \begin{enumerate}[label=(\roman*)]
  \item if $\lambda(\w{x}_{h,k}) > \bar{\eta}$ and the coarse step is taken then $    f_h(\w{x}_{h,k+1}) - f_h(\w{x}_{h,k}) \leq -\gamma_\text{S}$ \label{cor case1}
  \item if $\lambda(\w{x}_{h,k}) > \bar{\eta}$ and the fine step is taken then $    f_h(\w{x}_{h,k+1}) - f_h(\w{x}_{h,k}) \leq -\gamma_\text{N}$ \label{cor case2}
  \item if $\lambda(\w{x}_{h,k}) \leq \bar{\eta}$ and the coarse step is taken, then \cref{gama} selects the unit step and 
  \begin{align*}
              \lambda(\w{x}_{h,k+1}) \leq  \frac{\varepsilon + \lambda(\w{x}_{h,k})}{\left( 1 - \lambda(\w{x}_{h,k}) \right)^2} 
   \lambda(\w{x}_{h,k})<\lambda(\w{x}_{h,k}),  
   \end{align*} \label{cor case3}
  \item if $\lambda(\w{x}_{h,k}) \leq \bar{\eta}$ and the fine step is taken, then \cref{gama} selects the unit step and 
  \begin{align*}
              \lambda(\w{x}_{h,k+1}) \leq  \left( \frac{\lambda(\w{x}_{h,k})}{ 1 - \lambda(\w{x}_{h,k}) } \right)^2<\lambda(\w{x}_{h,k}).  
   \end{align*} \label{cor case4}
 \end{enumerate}
\end{theorem}
}

\begin{proof}
{If $\lambda(\w{x}_{h,k})<1$ and the coarse direction is taken, we take that
\begin{equation*}
            \lambda(\w{x}_{h,k+1}) \leq  \frac{\varepsilon + \hat{\lambda}(\w{x}_{h,k})}{\left( 1 - \hat{\lambda}(\w{x}_{h,k}) \right)^2} 
   \lambda(\w{x}_{h,k}) \leq \frac{\varepsilon + \lambda(\w{x}_{h,k})}{\left( 1 - \lambda(\w{x}_{h,k}) \right)^2} 
   \lambda(\w{x}_{h,k})
\end{equation*}
where the last inequality follows since $\hat{\lambda}(\w{x}_{h,k}) \leq \lambda(\w{x}_{h,k})$. By \cref{thm418}, we obtain reduction in the Newton decrement and thus a composite convergence rate if $\lambda(\w{x}_{h,k}) \leq \eta \in (0, \frac{3 - \sqrt{5}}{2})$. On the other hand, the Newton method enters its local quadratic region of convergence if  $\lambda(\w{x}_{h,k}) \leq \eta_\text{N} \in (0, \frac{3 - \sqrt{5}}{2})$ \cite{MR2142598}. Setting $\bar{\eta} := \min\{ \eta, \eta_\text{N} \}$ and using $\lambda(\w{x}_{h,k}) \leq \bar{\eta}$ proves items \ref{cor case3} and \ref{cor case4}.}

{
Further, in \cref{cor case1}, since the coarse direction is taken it holds that $\hat{\lambda}(\w{x}_{h,k}) \geq \mu \lambda(\w{x}_{h,k}) > \mu \eta$. Combining this inequality with the result of \cref{lemma411}, the result holds for $\gamma_\text{S} = \alpha \beta \frac{\mu^2 \eta^2}{1 + \mu \eta}$. Lastly, \cref{cor case2} holds for $\gamma_\text{N} = \alpha \beta \frac{ \eta_\text{N}^2}{1 + \eta_\text{N}}$ (see \cite{MR2061575}). \qed }
\end{proof}

\subsection{Discussion on the Convergence Results} \label{sec discussion on thm}

Notice that the two phases of \cref{thm418} depend on the user-defined parameter $\mu$. That is, the region of the composite convergence is proportional to the value of $\mu$ that satisfies \cref{assumption lamda}. Specifically, as $\mu \rightarrow 1$, we see that $\varepsilon \rightarrow 0$ and  $\eta \in (0, \frac{3 - \sqrt{5}}{2})$ and thus SIGMA approaches the fast convergence of the full Newton method. On the other hand, as $\mu \rightarrow 0$, \cref{thm418} indicates a restricted region of the second phase and thus slower convergence. Therefore, as a consequence of \cref{thm418}, the user is able to select a-priori the desired region of the fast convergence rate through $\mu$. However, bear in mind that larger values in the user-defined parameter $\mu$ may yield more expensive iterations (fine steps). Therefore, there is a trade-off between the number of coarse steps and the choice of $\mu$. {Note also that the theorem shows that it is possible for the method to achieve a super-linear convergence rate. This will be attained if we select iteration-dependent $\w{R}_{k}$ such that $\varepsilon_k$ converges to zero as $k$ goes to zero. We examine this case in the next section}.
The following lemma offers further insights on the composite convergence rate. 

\begin{lemma} \label{lemma norm on d's upper bound}
For all $k \in \mathbb{N}$ it holds that
\begin{equation*}
    \left\| [\nabla^2 f_h ({\w{x}_{h,k}})]^{1/2} \left(\w{\hat{d}}_{h,k} - \w{d}_{h,k} \right) \right\|_2 \leq \lambda(\w{x}_{h,k}).
\end{equation*}
\end{lemma}

The above result shows that, without using \cref{assumption lamda}, the quantity of interest in \cref{lemma417} can, in the worst case, be as much as the Newton decrement. However, if $\hat{\lambda}(\w{x}_{h,k}) \leq \eta$, to obtain reduction in the Newton decrement, we must have $\left\| [\nabla^2 f_h ({\w{x}_{h,k}})]^{1/2} \left(\w{\hat{d}}_{h,k} - \w{d}_{h,k} \right) \right\|_2 \leq \varepsilon \lambda(\w{x}_{h,k})$, where $0 \leq \varepsilon < 1$. Analyzing this inequality for $\varepsilon=1$ we identify the following cases:
\begin{enumerate}[label=(\roman*), leftmargin=*]
\item \label{case1 equality}  $ \w{\hat{d}}_{h,k} = c \w{d}_{h,k} $, where $\varepsilon = 1$ holds for $c=0$ or $c=2$. To see this, by \cref{lemma norm on d's upper bound} we take,
\begin{equation*}
    \left\| [\nabla^2 f_h ({\w{x}_{h,k}})]^{1/2} \left( c-1 \right)\w{d}_{h,k} \right\|_2 = \left| c-1 \right|\lambda(\w{x}_{h,k}) \leq \lambda(\w{x}_{h,k}), 
\end{equation*}
and thus $\varepsilon=1$ holds for $c=0$ or $c=2$. The case $c=0$ indicates that $\w{\hat{d}}_{h,k} = 0$ while $\w{d}_{h,k} \neq 0$ as also discussed in \cref{sec universal method}. Alternatively, $c=0$ implies $\w{\hat{d}}_{h,k} = \w{d}_{h,k} = 0$ which is attained in limit. Further, $c=2$ also implies $ \left\| [\nabla^2 f_h ({\w{x}_{h,k}})]^{1/2} \left(\w{\hat{d}}_{h,k} - \w{d}_{h,k} \right) \right\|_2 \leq \lambda(\w{x}_{h,k})$, however, we note that this is an extreme case that rarely holds in practice.
\item \label{case2 equality lambdas} Similarly, we have that $\hat{\lambda}(\w{x}_{h,k}) = 0$ while $\lambda(\w{x}_{h,k}) \neq 0$ or $\hat{\lambda}(\w{x}_{h,k}) = \lambda(\w{x}_{h,k}) = 0$. This can be derived directly from \cref{lemma norm equality lambda's d's} and it is exactly the case \ref{case1 equality} for $c=0$.
\end{enumerate}
When either of the above cases hold, the multilevel algorithm will not progress. Notice that composite convergence rate holds for any choice of the prolongation operator and thus, since we make no further assumptions, this result verifies our intuition, i.e., there exists a choice of $\w{P}$ that may lead to an ineffective coarse step. Therefore, to prevent this, we make use of the \cref{assumption lamda}. Nevertheless, we notice that the above cases rarely hold in practice and thus we should expect the multilevel algorithm to enter the second-phase near the minimizer. {Additionally, the above cases indicate that in the worst case scenario the multilevel method will converge to a sub-optimal solution. However, it will never diverge. This result is important considering we take no assumptions on $\w{P}$ whatsoever.}

As discussed above, in this setting, SIGMA requires checking \cref{assumption lamda} (or $\| \w{R} \nabla f (\w{x}_{h,k}) \|_2 > \mu \| \nabla f (\w{x}_{h,k}) \|_2 $, see \cref{remark gama}) at each iteration. However, the checking process can be expensive for large scale optimization. In the next section we present a randomized version of the multilevel algorithm and we study problem instances where the checking process can be omitted. {In such cases there exists $0 \leq \varepsilon < 1$  and thus SIGMA is guaranteed to always converge to the solution even if fine steps are never taken. This result is new when analyzing multilevel methods and it further improves the efficacy of the proposed algorithm for large-scale optimization.}

\subsection{Super-linear Convergence Rate through the Nystr\"om Method} \label{sec nystrom}

The computational bottlenecks in  \cref{gama} are: \textbf{(i)} the construction of the coarse step in \cref{galerkin d_h}, and \textbf{(ii)} the checking process in order to avoid an ineffective search direction. To overcome both, we select the prolongation operator as in \cref{def P}, thus the Galerkin model is generated based on the low-rank approximation of the Hessian matrix through the Nystr\"om method. {In particular, we show that if SIGMA enters the super-linear phase, then the checking process can be omitted.} Moreover, performing the Nystr\"om method we are able to overcome the computational issues related to the construction of the coarse direction (see \cref{sec nystrom intro}, \cref{remark Q_H nystrom} and \cref{remark reduced hess} for details on the efficient implementation of \cref{gama} using the Nystr\"om method). 

{
Given $\w{R}_k$ as in \cref{def P}, let $\rho \in [0,1]$ be the probability that $\hat{\lambda}(\w{x}_{h,k}) \leq \nu \in (0,0.68^2)$,  for any $k \in \mathbb{N}$ such that $\w{x}_{h,k} \neq \w{x}_h^*$. Recall from the results of the previous section that
$\hat{\lambda}(\w{x}_{h,k}) =  0$ implies no progress for the multilevel scheme and thus convergence to a sub-optimal solution. For this reason \cref{assumption lamda} was necessary in our previous analysis. Given now the fact that $\w{R}_k$ is constructed randomly according to \cref{def P}, it is possible to abandon \cref{assumption lamda} and provide probabilistic results that arise from some discrete probability distribution.
Note that by \cref{definition lamdahat and lamda}, $\rho$ effectively is the probability of selecting sufficiently small partial derivatives. Given the sampling strategies introduced in \cref{sec nystrom intro}, $\rho$ is expected to be small enough or zero. 
}

{
\begin{lemma} \label{lemma bounds unknown quantity}
Assume that $\w{R}_k$, $k \in \mathbb{N}$, is constructed as in \cref{def P}. Then, there exists $\mu_k \in \left(0, \frac{\hat{\lambda}(\w{x}_{h,k})}{\lambda(\w{x}_{h,k})} \right] $ such that $ 0 < \mu_k \leq 1$ and
\begin{align*} \label{eq bounds unknown quantity 1}
    \sqrt{ \lambda(\w{x}_{h,k})^2 - \hat{\lambda}(\w{x}_{h,k})^2} & \leq \sqrt{1 - \mu_k^2} \lambda(\w{x}_{h,k}) 
\end{align*}
with probability $1-\rho$.
\end{lemma}
}

{Since there exist iteration-dependent scalars $\mu_k$, we are able to show a super-linear convergence rate. This is true because $0<\mu_k\leq1$, and thus it is possible to select $\w{R}_k$ such that $\lim_{k \rightarrow \infty} \mu_k = 1$. Therefore, to show the desired result, we additionally need to impose the assumption that  $\lim_{k \rightarrow \infty} \mu_k = 1$.
The theorem below presents an instance of SIGMA that achieves a local super-linear rate. 
}

{
\begin{theorem} \label{thm seuperlinear probabilities}
Suppose that the coarse direction is constructed with $\w{R}_k$ as in \cref{def P} such that $\mu_k = 1 - \frac{1}{2 \ln(2 + k)}$. Moreover, suppose that the sequence $( \w{x}_{h,k} )_{k \in \mathbb{N}}$ is generated by \cref{gama}. Then, there exist constants $\hat{\gamma} >0$ and $\hat{\eta} \in (0, \frac{3 - \sqrt{5}}{2})$ such that 
 \begin{enumerate}[label=(\roman*)]
  \item if $\lambda(\w{x}_{h,k}) > \hat{\eta}$, then, with probability $1-\rho$,  \label{case1 probs}
  \begin{equation*}
    f_h(\w{x}_{h,k+1}) - f_h(\w{x}_{h,k}) \leq -\hat{\gamma},
  \end{equation*}
  \item if $\lambda(\w{x}_{h,k}) \leq \hat{\eta}$, then \cref{gama} selects the unit step and \label{case2 probs}
  \begin{align*}
    \hat{\lambda}(\w{x}_{h,k+1}) & < \left( \frac{\hat{\lambda}(\w{x}_{h,k})}{1 - \hat{\lambda}(\w{x}_{h,k})} \right)^2 < \hat{\lambda}(\w{x}_{h,k}) 
   \end{align*}
   where this process converges quadratically. Setting $\varepsilon_k := \sqrt{1 - \mu_k^2}$, we further have
   \begin{equation*}
          \lambda(\w{x}_{h,k+1})  < \frac{\varepsilon_k + \lambda(\w{x}_{h,k})}{\left( 1 - \lambda(\w{x}_{h,k}) \right)^2} 
   \lambda(\w{x}_{h,k}) < \lambda(\w{x}_{h,k}) \label{reduction lambda probs}
   \end{equation*}
   where this process achieves a super-linear convergence rate.  Both results in this phase hold with probability $1 - \rho$.
 \end{enumerate}
\end{theorem}
}

{\begin{proof}
    Recall the following inequality from \cref{lemma411}
    \begin{equation*}
 f_h(\w{x}_{h,k} + t_{h,k} \w{\hat{d}}_{h,k}) - f_h(\w{x}_{h,k}) \leq - \alpha \beta \frac{\hat{\lambda}(\w{x}_{h,k})^2}{1 + \hat{\lambda}(\w{x}_{h,k})}.
\end{equation*}
Then from \cref{lemma bounds unknown quantity} we have that $\hat{\lambda}(\w{x}_{h,k}) \geq \mu_k \lambda(\w{x}_{h,k}) \geq \mu_k \hat{\eta} \geq \mu_0 \hat{\eta} $, where the last equality holds since $\mu_k$ forms an increasing sequence of real numbers. 
Thus, using $\hat{\gamma} := \alpha \beta \frac{\mu_0^2 \hat{\eta}^2}{1 + \mu_0 \hat{\eta}}$ and since $x \rightarrow \frac{x^2}{1+x}$ is an increasing function, the result of the first phase follows with probability $1 - \rho$. Let $K \in \mathbb{N}$ denote the first iteration that satisfies $\lambda(\w{x}_{h,K})<1$. Then, for all $k \geq K$, we have that $\mu_K \lambda(\w{x}_{h,k}) \leq \mu_k \lambda(\w{x}_{h,k}) \leq \hat{\lambda}(\w{x}_{h,k}) \leq \lambda(\w{x}_{h,k}) < 1$, where, in fact, $\mu_K = 1 - \frac{1}{\ln(2 + K)}$ is the smallest of such $\mu_k$.
Combining the last inequality, \cref{lemma bounds unknown quantity} and \cref{lemma415}, the method accepts the unit step if 
$\lambda(\w{x}_{h,k}) \leq \frac{1 - 2\alpha}{2 \mu_K}$. This proves \cref{thm422} with probability $1-\rho$. Last, as in the proof of \cref{thm418}, for $\lambda(\w{x}_{h,k}) < 1$, we can show
\begin{equation*}
        \lambda(\w{x}_{h,k+1}) \leq  \frac{\varepsilon_k + \hat{\lambda}(\w{x}_{h,k})}{\left( 1 - \hat{\lambda}(\w{x}_{h,k}) \right)^2} 
   \lambda(\w{x}_{h,k}) \leq  \frac{\varepsilon_k + \lambda(\w{x}_{h,k})}{\left( 1 - \lambda(\w{x}_{h,k}) \right)^2} 
   \lambda(\w{x}_{h,k}),
\end{equation*}
with probability $1-\rho$. Here, the difference is that the scalar $\varepsilon_k$ is not fixed but it depends on the iteration-dependent $\mu_k$, i.e., $\varepsilon_k = \sqrt{1 - \mu_k^2}$. Therefore,
\begin{equation*}
    \lim_{k \rightarrow \infty} \frac{\varepsilon_k + \lambda(\w{x}_{h,k})}{\left( 1 - \lambda(\w{x}_{h,k}) \right)^2} =   0,
\end{equation*}
which proves the super-linear convergence. Finally, the scalar $\hat{\eta}$ that controls the super-linear rate is given by $\hat{\eta} := \frac{3 - \sqrt{5 + 4 \varepsilon_K}}{2}$, where $\varepsilon_K = \sqrt{1 - \mu_K^2}$ and thus we obtain $\hat{\eta} \in (0, \frac{3 - \sqrt{5}}{2})$. Then, we can ensure that for all $k \in \mathbb{N}$ such that $\lambda(\w{x}_{h,k}) < \hat{\eta}$ it holds $\lambda(\w{x}_{h,k+1}) < \lambda(\w{x}_{h,k})$, and hence in the second-phase \cref{gama} converges super-linearly. \qed
\end{proof} 
}

{The above theorem shows that the multilevel algorithm can achieve a super-linear convergence if $\w{R}_k$ is selected such that $\lim_{k \rightarrow \infty} \mu_k = 1$. In this case, fine directions need not be taken during the process. In the next section, we illustrate that SIGMA can achieve a super-linear or even a quadratic convergence rate in practice, and thus the key assumption of \cref{thm seuperlinear probabilities} is not just theoretical. The theorem also allows for quadratic convergence rate, i.e., there exists $K_q \in \mathbb{N}$ such that $\mu_k = 1$ for all $k \geq K_q$.
Quadratic rates are also observed for SIGMA in our experiments. Therefore, our theory is consistent with practice. Moreover, obviously, $\lim_{k \rightarrow \infty} \mu_k = 1$ holds when $\lim_{k \rightarrow \infty} \hat{\lambda}(\w{x}_{h,k}) = \lambda(\w{x}_{h,k})$. This condition is expected to be satisfied for $ n \rightarrow N$. However, larger values in $n$ yield more expensive iterations. Therefore, one should search for examples where $\hat{\lambda}(\w{x}_{h,k}) \approx \lambda(\w{x}_{h,k})$ for $n \ll N$. Instances of problem structures that satisfy $\hat{\lambda}(\w{x}_{h,k}) \approx \lambda(\w{x}_{h,k})$ include cases such as when the Hessian matrix is nearly low-rank or when there is a big gap between its eigenvalues, i.e., $\lambda_1 \geq \lambda_2 \geq \cdots \geq \lambda_{n} \gg \lambda_{n+1} \geq \cdots \geq \lambda_N$, or, in other words, when the important second-order information is concentrated in the first few eigenvalues. Given such problem structures, we expect very fast convergence rates without ever using fine directions, and thus a convergence behavior similar to \cref{thm seuperlinear probabilities}. This claim is also verified via numerical experiments.}

\blue{
On the other hand, in the absence of the assumption $\lim_{k \rightarrow \infty} \mu_k = 1$, we can still guarantee convergence of SIGMA with a local composite rate and probability $1 - \rho$. However, in this case, the assumption that there exists a global $\mu \in (0,1]$ such that $\hat{\lambda}(\w{x}_{h,k}) \geq \mu \lambda(\w{x}_{h,k})$ with probability $1-\rho$ is required. We may as well compute the total number of steps that \cref{gama} requires to reach a tolerance $\epsilon$. Assume that the algorithm enters the composite phase and achieves a rate $a_r$, i.e., $\lambda(\w{x}_{h,{k+1}}) \leq a_r \lambda(\w{x}_{h,k})$. We also can obtain the region of composite convergence which depends on $a_r$, that is $\eta(a_r)$. We set $\eta := \eta(a_r)$ for simplicity. Given the above considerations we can guarantee the following about the total number of steps in both phases.
}

\blue{
\begin{corollary} \label{corol: no of iterations}
    Let $\gamma := \alpha \beta \frac{\eta^2}{1 + \eta}$, $\eta := \eta(a_r)$ and $\epsilon \in (0, 0.68^2)$.
    Further, suppose that the coarse direction is constructed with $\w{R}_k$ as in \cref{def P} and that the sequence $( \w{x}_{h,k} )_{k \in \mathbb{N}}$ is generated by \cref{gama}. Then, the total number of iteration for the \cref{gama} to reach an $\epsilon$-approximate value in the objective function is at least
    \begin{equation*}
        N_\epsilon = \left \lceil{ \frac{f(\w{x}_{h,0} - \w{x}_{h}^* )}{\gamma}} \right \rceil  + \left \lceil{ \frac{\log_b(\frac{\epsilon}{\eta})}{\log_b(a_r)}} \right \rceil, \quad b >0, b \neq 1,
    \end{equation*}
    with probability $1-\rho$.
\end{corollary}
}

\begin{proof}
\blue{    Recall that the main assumption we use here is that $\hat{\lambda}(\w{x}_{h,k}) \geq \mu \lambda(\w{x}_{h,k})$ which holds with probability $1-\rho$. Then, we guarantee that $\hat{\lambda}(\w{x}_{h,k}) > 0$ and thus the result of the first phase of \cref{thm418} holds with probability $1-\rho$. This shows that the the objective function decreases by at least $\gamma$ at each iteration and thus the number of iteration taken by \cref{gama} during the first phase is at most 
    \begin{equation*}
       \frac{f(\w{x}_{h,0} - \w{x}_{h}^* )}{\gamma},
    \end{equation*}
    with probability $1-\rho$. }
    \blue{
    Moreover, assume that at an iteration $K \in \mathbb{N}$ the algorithm enters the composite phase. It holds $\lambda(\w{x}_{h,K}) \leq \eta$. Next, let $K_{\epsilon}$ be the number of iterations required for the method to reach a tolerance $\epsilon$. Then by \cref{lemma suboptimality gap}
    \begin{align*}
      f(\w{x}_{h,K + K_\epsilon}) - f(\w{x}_{h}^*) & \leq  \lambda(\w{x}_{h,K + K_\epsilon})^2 \leq a_r^{2 K_\epsilon} \lambda(\w{x}_{h,K})^2 \leq a_r^{2 K_\epsilon} \eta^2,
    \end{align*}
    where the second inequality holds by \cref{thm418} assuming  that the Newton decrement decreases as $\lambda(\w{x}_{h,{k+1}}) \leq a_r \lambda(\w{x}_{h,k})$. Note that this decrease holds with probability $1-\rho$ due to the main assumption. Therefore, $a_r^{2 K_\epsilon} \eta^2 \leq \epsilon^2$ yields
    \begin{equation*}
        K_\epsilon \geq \frac{\log_b(\frac{\epsilon}{\eta})}{\log_b(a_r)},
    \end{equation*}
    which ensures that $f(\w{x}_{h,K + K_\epsilon}) - f(\w{x}_{h}^*) \leq \epsilon^2$ after $K_\epsilon + 1$ iterations in the composite phase. Putting this all together the result of the theorem follows with probability $1-\rho$. \qed
    } 
\end{proof}
\blue{
Below we present a simple example using an explicit value on $a_r$ (and therefore on $\eta$ and $\mu$). 
}

\blue{
\emph{Example: Assume $a_r := 1/2$, i.e., we wish the decrease to be as much as $\lambda(\w{x}_{h,{k+1}}) \leq 1/2 \lambda(\w{x}_{h,k})$ in the second phase of \cref{thm418}. Applying the proof of \cref{thm418}, this value of $a_r$ can be achieved as long as  $\mu > \sqrt{3/4}$. Moreover, $a_r = 1/2$ yields the following definition on the region of the composite convergence: $\eta = 2 - \sqrt{3 + 2 \varepsilon}$, where $\varepsilon$ as in \cref{thm418}. Note that the condition  $\mu \in (\sqrt{3/4}, 1]$ ensures that $\eta$ is positive. Specifically, we have that $\eta \in (0, 13/50)$. Then, applying these values in the result of \cref{corol: no of iterations}, and given a tolerance $\epsilon := 10^{-5}$, we obtain $K_\epsilon \geq  
 \left \lceil{ \log_2(\frac{13}{50 \epsilon}}) \right \rceil = 15$. Therefore, in this set up, with a probability $1-\rho$, \cref{gama} requires just $15$ iterations during the composite phase to ensure that $f(\w{x}_{h,K + K_\epsilon}) - f(\w{x}_{h,*}) \leq \epsilon^2$ on exit, using only coarse steps.}
}

Furthermore, in the remark below, we give insights on the magnitude of probability $\rho$ when the sampling strategy arises from the uniform or adaptive distributions (see \cref{sec nystrom intro}).

{
\begin{remark} \label{remark on probability rho}
Recall that according to the \cref{def P}, the probability that $\hat{\lambda}(\w{x}_{h,k}) \leq \nu$, boils down to the probability of selecting the $n$-(almost) zero entries from the $N$ entries of the gradient vector, while $\| \nabla f(\w{x}_{h,k}) \| \neq 0$. Let $r$ be the number of zero elements of the gradient vector and assume that the $n$ samples are drawn uniformly without replacement. Then, if $n > r$ we take $\rho = 0$, and thus the results in \cref{thm seuperlinear probabilities} hold with probability one. If $n \leq r$, then ${\binom{r}{n}}$ denotes all the $n$ combinations of the zero elements, and ${\binom{n}{N}}$ the total number of $n$ combinations of the $N$ entries of the gradient vector. Then,
\begin{equation*}
    \rho = \frac{{\binom{r}{n}}}{{\binom{N}{n}}} = \frac{(r - n +1)  \cdots (r-1)r}{(N - n +1) \cdots (N-1)N}.
\end{equation*}
This result indicates that when $r \ll N$, $\rho$ must be small enough. In practical applications, it is common to expect $r \ll N$ or $n > r$, and hence SIGMA will converge with high probability or probability one, respectively. On the other hand, if the $n$ samples are collected according to the adaptive distribution then, by construction, $\rho = 0$ and SIGMA will converge with probability one. The aforementioned observation is presented formally in the corollary below, and it will be verified through the numerical experiments in \cref{sec experiments} and \cref{sec num res}. \qed
\end{remark}
}

\blue{
\begin{corollary}
Suppose that the coarse direction is constructed with $\w{R}_k$ as in \cref{def P}, where $\mathbf{p}$ is the adaptive distribution (see \cref{sec nystrom intro}). Assume also that $\mu_k = 1 - \frac{1}{2 \ln(2 + k)}$. Moreover, suppose that the sequence $( \w{x}_{h,k} )_{k \in \mathbb{N}}$ is generated by \cref{gama}. Then, there exist constants $\hat{\gamma} >0$ and $\hat{\eta} \in (0, \frac{3 - \sqrt{5}}{2})$ such that 
 \begin{enumerate}[label=(\roman*)]
  \item if $\lambda(\w{x}_{h,k}) > \hat{\eta}$, then
  \begin{equation*}
    f_h(\w{x}_{h,k+1}) - f_h(\w{x}_{h,k}) \leq -\hat{\gamma},
  \end{equation*}
  with probability one,
  \item if $\lambda(\w{x}_{h,k}) \leq \hat{\eta}$, then \cref{gama} selects the unit step and 
  \begin{align*}
    \hat{\lambda}(\w{x}_{h,k+1}) & < \left( \frac{\hat{\lambda}(\w{x}_{h,k})}{1 - \hat{\lambda}(\w{x}_{h,k})} \right)^2 < \hat{\lambda}(\w{x}_{h,k}) 
   \end{align*}
   where this process converges quadratically. Setting $\varepsilon_k := \sqrt{1 - \mu_k^2}$, we further have
   \begin{equation*}
          \lambda(\w{x}_{h,k+1})  < \frac{\varepsilon_k + \lambda(\w{x}_{h,k})}{\left( 1 - \lambda(\w{x}_{h,k}) \right)^2} 
   \lambda(\w{x}_{h,k}) < \lambda(\w{x}_{h,k})
   \end{equation*}
   where this process achieves a super-linear convergence rate.  Both results in this phase hold with probability one.
 \end{enumerate}
\end{corollary}
}
\blue{
\begin{proof}
Combining \cref{thm seuperlinear probabilities} and the definition of the adaptive sampling strategy in \cref{sec nystrom intro}, the result of the theorem follows immediately. \qed   
\end{proof}
}

\blue{
Last, we prove a simple convergence result of SIGMA in expectation. It shows that the expected value of the objective function decreases by at least $\tilde{\gamma} > 0$ at each iteration.
}

\blue{
\begin{theorem} \label{thm expects}
 Suppose that we select $\w{R}_k$ as in \cref{def P} and that the sequence $ ( \w{x}_{h,k} )_{k \in \mathbb{N}}$ is generated from \cref{gama}. Suppose also $\hat{\lambda}(\w{x}_{h,k}) > \nu$ while $\w{x}_{h,k} \neq \w{x}_{h}^*$. Then, there exists $\tilde{\gamma} > 0$ such that
  \begin{equation*}
    \mathbb{E} \left[ f_h(\w{x}_{h,k+1}) - f_h(\w{x}_{h,k}) \right]\leq -\tilde{\gamma}.
  \end{equation*}
\end{theorem}
}

\begin{proof}
Since $\hat{\lambda}(\w{x}_{h,k}) > \nu$, \cref{lemma bounds unknown quantity} holds with probability one. Then, taking expectation conditioned on $\mathcal{F}_k$ we have that $     \tilde{\mu}^2 \leq  \frac{ \mathbb{E} [ \hat{\lambda}(\w{x}_{h,k}) | \mathcal{F}_k ]^2}{\lambda(\w{x}_{h,k})^2}$, where $\tilde{\mu} := \mathbb{E}[\mu_k | \mathcal{F}_k]>0$. Moreover, from \cref{lemma411} we have that
\begin{equation*}
f_h(\w{x}_{h,k+1}) - f_h(\w{x}_{h,k}) \leq - \alpha \beta \frac{\hat{\lambda}(\w{x}_{h,k})^2}{1 + \hat{\lambda}(\w{x}_{h,k})}.
\end{equation*}
Let $g : x \rightarrow \frac{x^2}{1+x}$, $\operatorname{dom} g = [0,1)$, and take expectation on both sides conditioned on $\mathcal{F}_k$. Then, $ \mathbb{E} \left[ f_h(\w{x}_{h,k+1}) | \mathcal{F}_k \right] - f_h(\w{x}_{h,k}) \leq - \alpha \beta \mathbb{E} \left[ g (\hat{\lambda}(\w{x}_{h,k})) | \mathcal{F}_k \right]$. By convexity of $g$ and Jensen's inequality we take $ \mathbb{E} \left[ f_h(\w{x}_{h,k+1}) | \mathcal{F}_k \right] - f_h(\w{x}_{h,k}) \leq - \alpha \beta g \left( \mathbb{E} \left[ \hat{\lambda}(\w{x}_{h,k}) | \mathcal{F}_k \right] \right)$.  Using this, the fact that $g$ is monotone increasing and 
 $\mathbb{E} \left[ \hat{\lambda}(\w{x}_{h,k}) | \mathcal{F}_k \right] > \tilde{\mu} \lambda(\w{x}_{h,k})$, we take
\begin{equation*}
    \mathbb{E} \left[ f_h(\w{x}_{h,k+1}) | \mathcal{F}_k \right] - f_h(\w{x}_{h,k}) \leq - \alpha \beta \frac{\hat{\mu}^2 \lambda(\w{x}_{h,k})^2}{1 + \hat{\mu} \lambda(\w{x}_{h,k})}
\end{equation*}
which holds almost surely. Taking expectation on both sides w.r.t. the randomness induced by \cref{def P}, the claim follows with $\tilde{\gamma} = \alpha \beta \mathbb{E} \left[ \frac{\hat{\mu}^2 \lambda(\w{x}_{h,k})^2}{1 + \hat{\mu} \lambda(\w{x}_{h,k})} \right] >0$. Last, one can also show a more conservative bound using $\mathbb{E} \left[ \hat{\lambda}(\w{x}_{h,k}) | \mathcal{F}_k \right] > \nu$. Then, we take $\tilde{\gamma} = \alpha \beta  \frac{\nu^2}{1 + \nu}$. \qed
\end{proof}

{The above result, even though it does not discusses the convergence rate in expectation, is important as it indicates that, when the coarse directions are constructed randomly, we can always find $\mu>0$ such that, on average, \cref{assumption lamda} is satisfied. This now means that convergence to the global minimum is attained as long as $\hat{\lambda}(\w{x}_{h,k}) > \nu$. Hence, \cref{thm expects} and \cref{thm seuperlinear probabilities} are complementary to each other and together indicate that the coarse directions, constructed from \cref{def P}, will always be effective and yield progress of \cref{gama}. Thus, convergence of \cref{gama} to the minimum should be expected without ever taking fine directions. This important result is obtained since the method enjoys a global convergence analysis and will not diverge, see \cref{corollary complete convergence} and discussion in \cref{sec discussion on thm}}.

\section{Examples and Numerical Results} \label{sec experiments}

In this section we validate the efficacy of the proposed algorithm and we verify our theoretical results 
on optimization problems that arise in machine learning applications. Specifically, in the first two sections {we use the conventional SIGMA (uniform sampling)} to solve the maximum likelihood estimation problem based on the Poisson and Logistic models, respectively. Full details about the experimental setup, objective functions and the datasets are given in \cref{sec num res}. {Furthermore, in \cref{sec exps sample strategies} we provide comparisons between the conventional SIGMA and SIGMA with the different sampling strategies of \cref{sec nystrom intro}}. Moreover, we provide additional experiments and we test SIGMA with sub-sampling (\cref{sec sub-sampled sigma}). In \cref{remark reduced hess} we discuss how to efficiently compute the reduced Hessian matrix for Generalized Linear Models.

\subsection{Poisson Regression and Impact of the Spectral Gap}

\begin{figure}[t]
	\begin{subfigure}{.6\textwidth}
		\centering
		\includegraphics[width=1\linewidth]{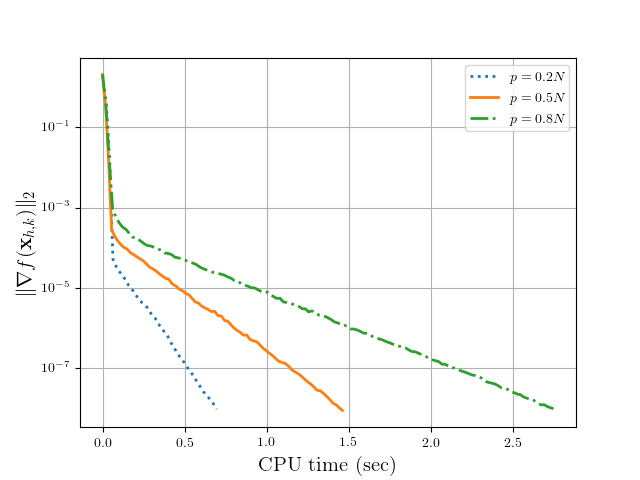}
		\caption{Synthetic}
		\label{subfig kappa}
	\end{subfigure}
	\begin{subfigure}{.6\textwidth}
		\centering
		\includegraphics[width=1\linewidth]{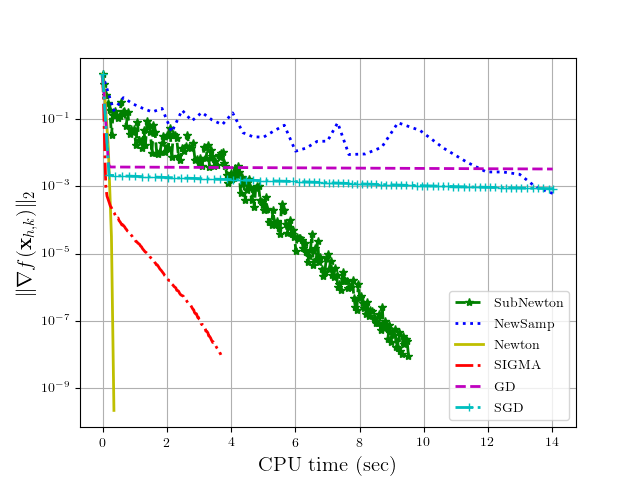}
		\caption{Synthetic}
		\label{subfig gen}
	\end{subfigure}
	\begin{subfigure}{.6\textwidth}
		\centering
		\includegraphics[width=1\linewidth]{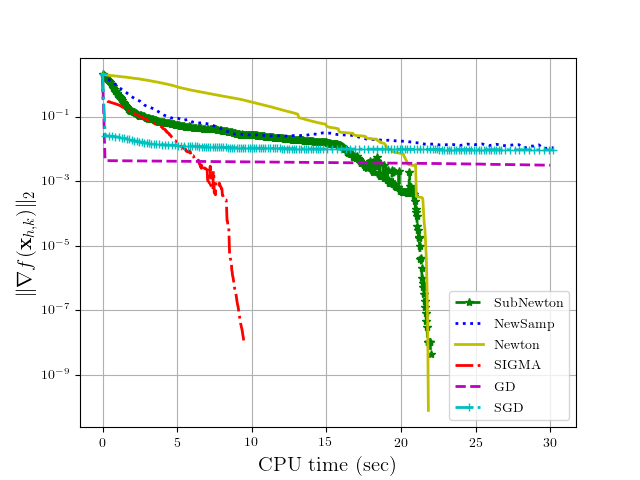}
		\caption{Synthetic with $\xi_1 = 10^{-3}$}
		\label{subfig genl1}
	\end{subfigure}
	\caption{From left to right, the first figure shows convergence of SIGMA for different values in $p$. The second and third figures compare the optimization methods over the $\ell_2$ and elastic-net regularized Poisson regression respectively.}
	\label{fig poisson}
\end{figure}

In this section we attempt to verify the claim that SIGMA enjoys a very fast super-linear convergence rate when there is a big gap between the eigenvalues of the Hessian matrix (see discussion that follows \cref{thm seuperlinear probabilities}). To illustrate this, we generate an input matrix $\w{A}$ based on the Singular Value Decomposition (SVD) and then we run experiments on the Poisson model. More precisely, $\w{A}$ is constructed as follows. We first generate random orthogonal matrices $\w{U} \in \mathbb{R}^{m \times m}$ and $\w{V} \in \mathbb{R}^{N \times N}$, $m > N$, from the $\mathcal{O}(N)$ Haar distribution \cite{mezzadri2006generate}. Denote now the matrix of singular values as $
\w{\Sigma} := 
\begin{bmatrix}
\w{\Sigma}_N & \w{0} \\
\end{bmatrix}^T \in  \mathbb{R}^{m \times N}$,
where $\w{\Sigma}_N := \operatorname{diag} (\sigma_1, \sigma_2, \ldots, \sigma_N)$ is a square diagonal matrix. To form a gap, we select some $p$ from the set of integers  $S_N = \{1,2, \ldots, N \} $ and then we compute $N$ singular values such that  $\sigma_1 > \sigma_2 > \cdots > \sigma_p \gg \sigma_{p+1} \geq \cdots \geq \sigma_N>0$, where the first $p$ are evenly-spaced. Then, we set $\w{A} := \w{U} \w{\Sigma} \w{V}^T$ and we expect $\w{A}$ to have a big gap between the $p$ and ${p+1}$ singular values. Further, note that the Hessian matrix of the Poisson model contains the product $\w{A}^T \w{A}$. Observe that $\w{A}^T \w{A} = \w{V} \w{\Sigma}_N^2 \w{V}^T$, and thus $\sigma_1^2, \sigma_2^2, \ldots, \sigma_N^2$ are the eigenvalues of $\w{A}^T \w{A}$. Therefore, by the construction of $\w{\Sigma}_N$, we should expect the Hessian matrix to have a big gap between the $p$ and ${p+1}$ eigenvalues. 

In the first experiment (\cref{subfig kappa}), we compare the performance of SIGMA for three different locations of the ``singular value gap'', i.e., $p = \{ 0.2N, 0.5 N, 0.8N \} $. In all three cases we set  the coarse model dimensions to $n = N/2$. Figure \ref{subfig kappa} shows the effect of the eigenvalue gap in the convergence rate of SIGMA. Clearly, when the gap is placed in the first few singular values (or eigenvalues respectively) SIGMA achieves a very fast super-linear rate. In particular, when $p = 0.2N$,  the convergence of SIGMA to the solution is about five times faster in comparison to the convergence when $p=0.8N$, which  verifies our intuition that smaller $p$ yields faster convergence rates for SIGMA. Similar behavior should be expected when the Hessian matrix is (nearly) low-rank
or when important second-order information is concentrated on the first few eigenvalues.

Next, we compare SIGMA against the other optimization methods when $\w{A}$ is generated with $p = 0.5N$. The coarse model dimensions for SIGMA is set as above. For the NewSamp and SubNewton we use $|S_m| = m/2$ samples at each iteration. Figure \ref{subfig gen} shows the performance between the optimization methods over the $\ell_2$-regularized Poisson regression. We observe that in the first phase, both the Newton method and SIGMA have similar behavior and they move rapidly towards the solution. Then, the Newton method enters in its quadratic phase and converges in few iterations while SIGMA achieves a super-linear rate. Besides the Newton method and SIGMA, SubNewton is the only algorithm that is able to reach a satisfactory tolerance within the time limit, but it is much slower. 

Enforcing sparsity in the solution is typical in most imaging applications that employ the Poisson model. Thus, we further run experiments with the pseudo-Hubert function in action with $\xi_1 = 10^{-3}$. The algorithms set up is as described above. The results for this experiment are reported in \cref{subfig genl1}.  Clearly, when sparsity is required, SIGMA outperforms all its competitors. In particular, \cref{subfig genl1} shows that SIGMA achieves a quadratic rate and is at least two times faster compared to the Newton and SubNewton methods. On the other hand, NewSamp, GD and SGD fail to reach the required tolerance within the time limit. Finally, we note that for smaller values of $p$ and/or larger input matrices $\w{A}$ the comparison is even more favorable for SIGMA.
\begin{figure}[!htb]\centering
	\begin{subfigure}{.5\textwidth}
		\centering
		\includegraphics[width=1\linewidth]{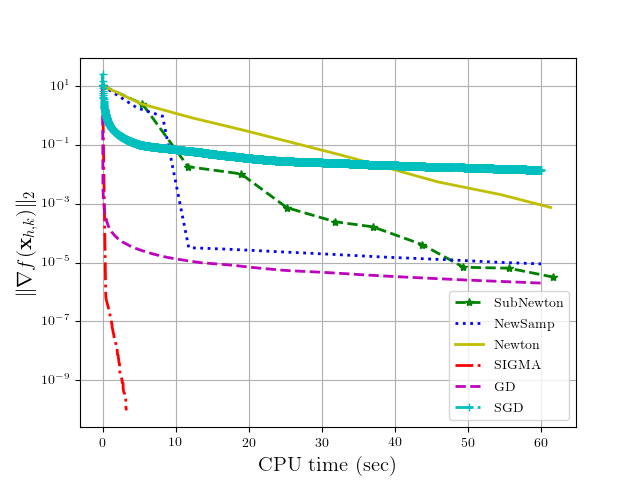}
		\caption{Leukemia}
		\label{subfig leu}
	\end{subfigure}
	\begin{subfigure}{.5\textwidth}
		\centering
		\includegraphics[width=1\linewidth]{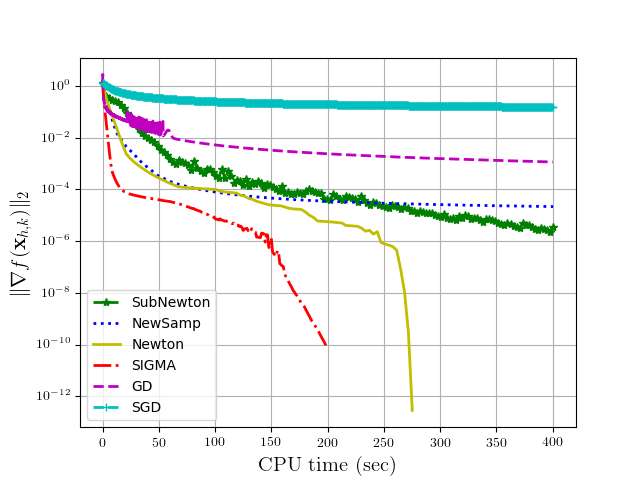}
		\caption{Gissette}
		\label{subfig gissette}
	\end{subfigure}
	\begin{subfigure}{.5\textwidth}
		\centering
		\includegraphics[width=1\linewidth]{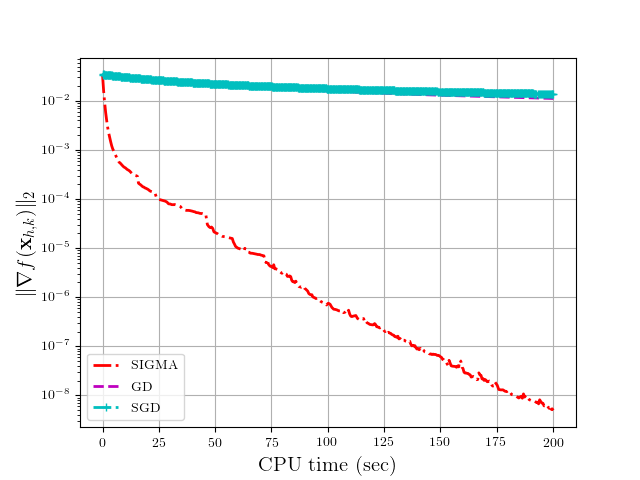}
		\caption{Real-sim}
		\label{subfig realisim}
	\end{subfigure}
	\caption{Performance of various optimization methods on different datasets for the logistic regression.}
	\label{fig logistic}
\end{figure}

\subsection{Logistic Regression} \label{subsec log regr}
In this set of experiments we report the performance of the optimization algorithms on the logistic model and three real datasets. In the first one (\cref{subfig leu}) we consider the Leukemia datasets for which $m \ll N$.  The coarse model dimensions for SIGMA is set to $n = 0.1N$. As for the Newsamp and SubNewton, $|S_m| = 0.1 m$ data points where used to form  the Hessian. Note that since here $m \ll N$, one should expect an eigenvalue gap located near $p = m$ and since $m$ is small SIGMA should enjoy a very fast convergence rate. This is illustrated in \cref{subfig leu} where, in particular, SIGMA converges in few seconds while its competitors are unable to get close to the true minimizer before the time exceeds. Note also that SIGMA obtains a convergence behavior similar to the one in \cref{subfig kappa} but it is sharper here since $p$ is much smaller than $N$. The only method that comes closer to our approach is the GD method which reaches a satisfactory tolerance very fast but then it slows down, and thus it struggles to approach the optimal point.

In the second experiment we consider the Gissete dataset with elastic-net regularization. For this example we set $n = N/2$ and $|S_m| = m/2$. The performance of the optimization algorithms is illustrated in \cref{subfig gissette} and observe that SIGMA outperforms all its competitors. The Newton method achieves a quadratic rate and thus reaches very high accuracy, but it is slower than SIGMA as the latter clearly achieves a super-linear rate and has cheaper iterates. The sub-sampled Newton methods reach a satisfactory tolerance within the time limit, however, note that very sparse solutions are only obtained in high accuracy. The GD methods on the other hand fail to even reach a sufficiently good solution before the time exceeds.

We end this set of experiments with the real-sim dataset over the $\ell_2$-regularized logistic regression. Since here both $m$ and $N$ are quite large, other than multilevel methods, only gradient-based methods can be employed to minimize the logistic model due to memory limitations. The comparison of the performance between SIGMA, GD and SGD is illustrated in \cref{subfig realisim}. Indisputably, \cref{subfig realisim} shows the efficiency of SIGMA against the first-order methods. In this example, SIGMA is able to return a very accurate solution while GD methods perform poorly, achieving a very slow linear convergence rate even from the start of the process.

\subsection{{Different Sampling Strategies}} \label{sec exps sample strategies}
{
In this section, we revisit the numerical experiments of the previous sections to compare SIGMA with the adaptive and mixed sampling strategies against conventional SIGMA which collects samples uniformly.
For the mixed strategy, we set $\tau = 0.5$ to all experiments. Besides parameter $\tau$, we consider the same set-up and tuning  (see sections above and \cref{subsec linear regr}). The results can be found in Figures \ref{fig extra logistic}, \ref{fig extra linear} and \ref{fig extra log and poisson}. Here, we report iterations instead  of CPU time as
we are interested in the effect of the sampling strategies on the convergence rate of SIGMA. 
}
\begin{figure}[!t]
	\begin{subfigure}{.5\textwidth}
		\centering
		\includegraphics[width=1\linewidth]{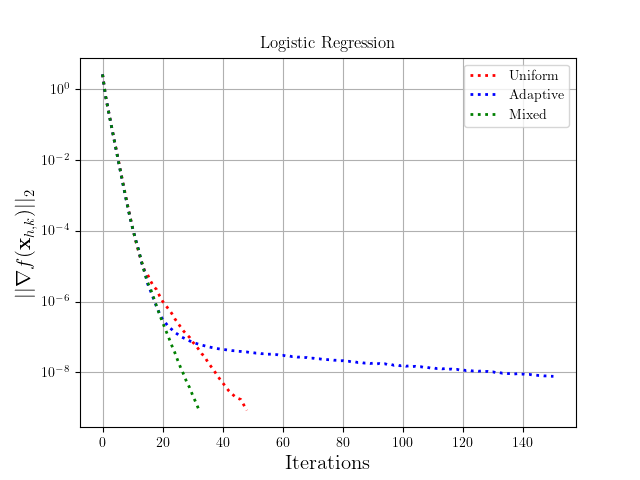}
		\caption{Gisette}
		\label{subfig extra gisette l2}
	\end{subfigure}
	\begin{subfigure}{.5\textwidth}
		\centering
		\includegraphics[width=1\linewidth]{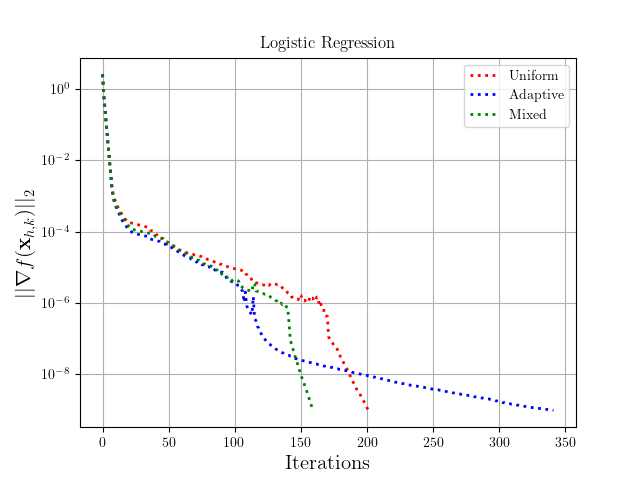}
		\caption{Gisette with $\ell_1$ regularization}
		\label{subfig extra gisette l1}
	\end{subfigure}
	\begin{subfigure}{.5\textwidth}
		\centering
		\includegraphics[width=1\linewidth]{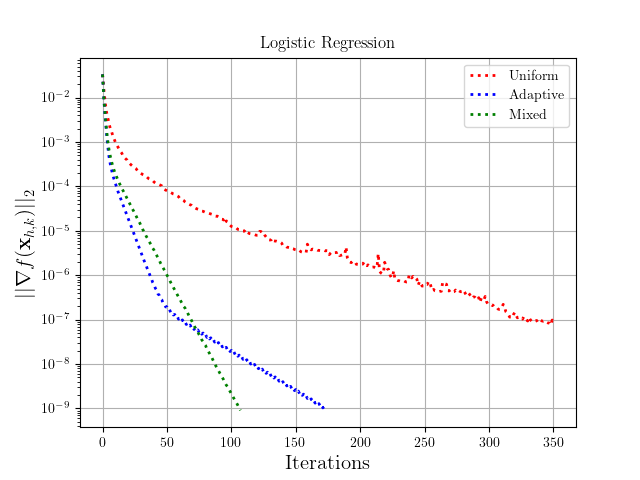}
		\caption{Real-sim}
		\label{subfig extra realisim}
	\end{subfigure}
	\caption{Comparisons between SIGMA with Uniform, Adaptive and mixed sampling strategies.}
	\label{fig extra logistic}
\end{figure}

\begin{figure}[!t]
	\begin{subfigure}{.5\textwidth}
		\centering
		\includegraphics[width=1\linewidth]{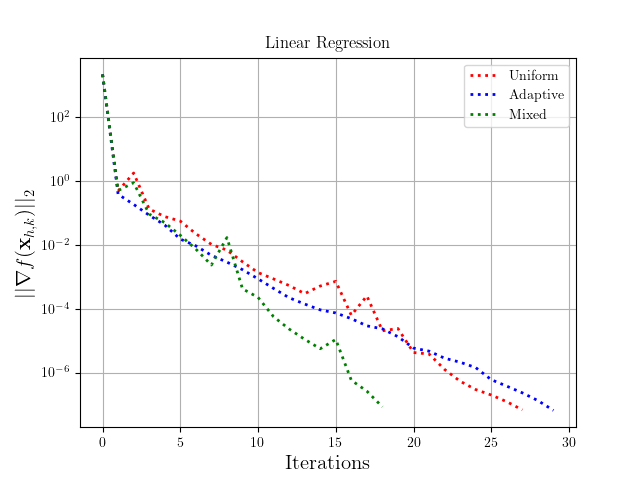}
		\caption{CtSlices}
		\label{subfig extra ctslices l2}
	\end{subfigure}
	\begin{subfigure}{.5\textwidth}
		\centering
		\includegraphics[width=1\linewidth]{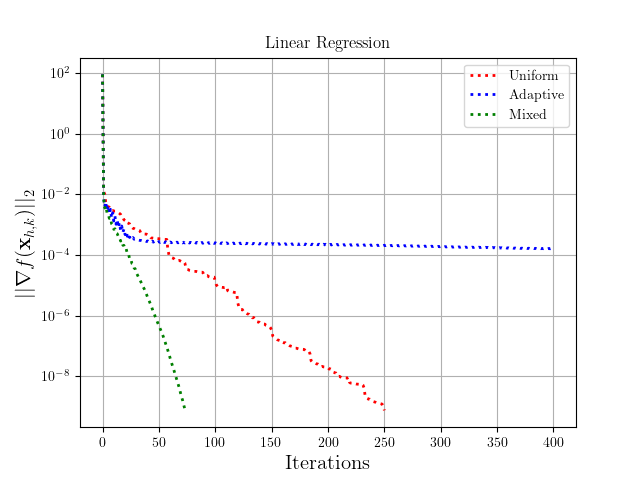}
		\caption{CMHS}
		\label{subfig extra cmhs}
	\end{subfigure}
	\begin{subfigure}{.5\textwidth}
		\centering
		\includegraphics[width=1\linewidth]{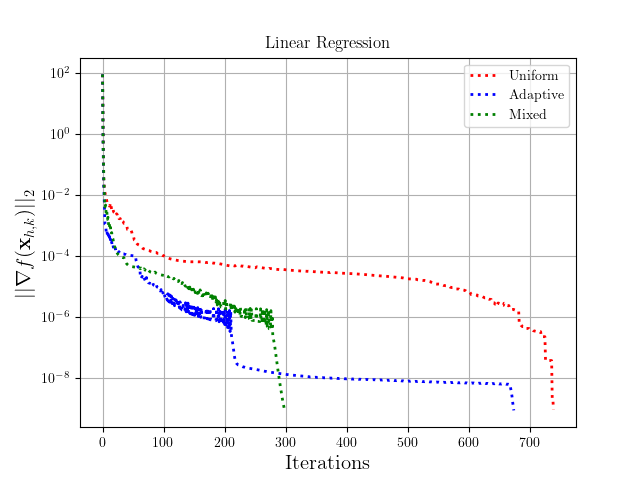}
		\caption{CMHS with $\ell_1$ regularization}
		\label{subfig extra cmhs l1}
	\end{subfigure}
	\caption{Comparisons between SIGMA with Uniform, Adaptive and mixed sampling strategies.}
	\label{fig extra linear}
\end{figure}

{
Observe that, in all the experiments, the adaptive and/or mixed strategies show substantial improvements in the convergence rate against of the conventional SIGMA. We notice that the adaptive strategy is particularly efficient in the early stages of the algorithm and it, in some examples, tends to slow down near the solution, especially when high accuracy is required. Therefore, when the goal is very accurate solutions, the results suggest that the uniform sampling may yield better convergence rates in the second-phase of SIGMA.
On the other hand, SIGMA with the mixed strategy significantly improves the convergence rate of the conventional SIGMA in all experiments, and both of its phases. Note that the mixed strategy was found to be at least $50\%$ faster and can be up to five times faster than SIGMA with uniform sampling (see \cref{subfig extra realisim}). Recall that the weight parameter $\tau$ is set to $0.5$ to account equally for adaptive and uniform sampling. However, as the experiments indicate, it would be more beneficial to set an adaptive (iteration-dependent) value of $\tau_k$ that starts from one (adaptive samples) and decays gradually towards zero (uniform samples). Therefore, even faster convergence rates will be expected for SIGMA. As a result, with the experiments presented in this section, we significantly improve the convergence behavior of multilevel or subspace method compared to the conventional methods that draw samples uniformly. In addition, SIGMA with the adaptive and mixed sampling strategies compares even more favorably to the state-of-the-art methods considered in previous sections.
}

\section{Conclusions and Perspectives} \label{sec conclusions}

\begin{figure}[t]
	\begin{subfigure}{.3\textwidth}
		\centering
		\includegraphics[width=1\linewidth]{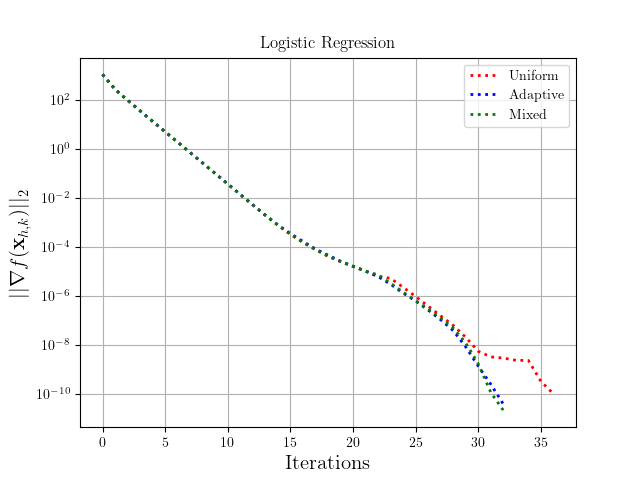}
		\caption{CtSlices}
		\label{subfig extra ctslices l2 logistic}
	\end{subfigure}
	\begin{subfigure}{.3\textwidth}
		\centering
		\includegraphics[width=1\linewidth]{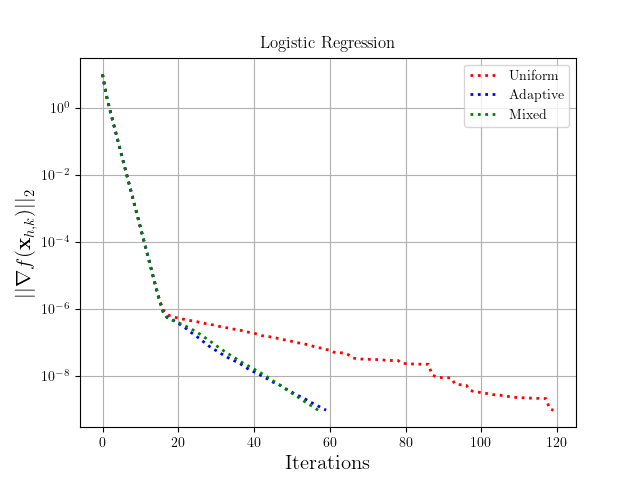}
		\caption{Leukemia}
		\label{subfig extra leukemia}
	\end{subfigure}
	\begin{subfigure}{.3\textwidth}
		\centering
		\includegraphics[width=1\linewidth]{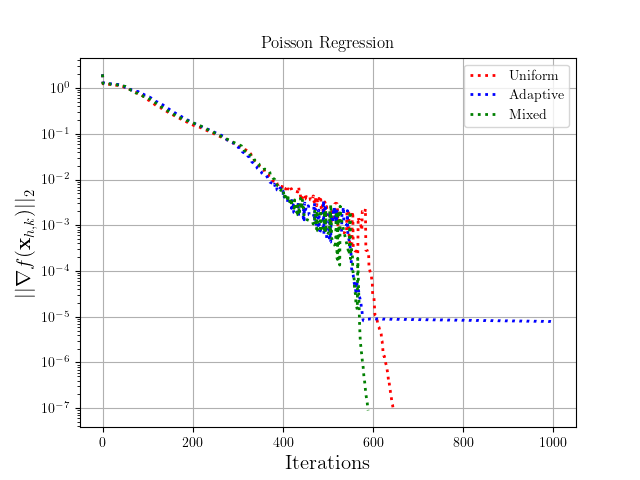}
		\caption{Synthetic with $\ell_1$ regularization}
		\label{subfig extra synthetic l1}
	\end{subfigure}
	\caption{Comparisons between SIGMA with Uniform, Adaptive and mixed sampling strategies.}
	\label{fig extra log and poisson}
\end{figure}

{We proposed SIGMA, a second-order variant of the Newton algorithm. We performed convergence analysis with the theory of self-concordant functions. We addressed two significant weaknesses of existing second-order methods for machine learning applications. In particular, the lack of global scale-invariant analysis and local super-linear convergence rates without restrictive assumptions. Our theory is general, in the sense that we do not assume specific problem structures. The theory allows also for a quadratic convergence rate. When the coarse direction is constructed randomly through the Nystr\"om method, the convergence analysis is provided in probability. Further, we introduce new adaptive sampling strategies to replace the conventional uniform sampling scheme. 
We show that the new strategies offer convergence results with higher probabilities than when considering the uniform sampling. In addition, we verify the theoretical results through numerical experiments. We illustrate that SIGMA can achieve both super-linear and quadratic convergence rates on realistic models. We, further, report substantial improvements in the convergence rate of SIGMA with the alternative strategies against the conventional SIGMA. As a future direction, we plan to extensively examine the practical behavior of SIGMA using variants of the new adaptive techniques. In particular, we plan to introduce adaptive and iteration-dependent $\tau_k$. We also consider replacing the $\|\cdot\|_1$ in the definition of adaptive sampling strategy with other norms. We anticipate that these modifications will offer better approximations of the Newton decrement. }

\begin{acknowledgements}
PP was partly supported by JPMorgan Chase $\&$ Co under a J.P. Morgan A.I. Research Faculty Award 2019/2021 and EPSRC Grant EP/W003317/1 
\end{acknowledgements}

All data supporting the findings of this study are available at the following URL: \url{https://archive.ics.uci.edu/datasets}.

\appendix  
\section{Appendix:  Bounds for Self-Concordant Functions}
In this section we prove some general results for self-concordant functions that will be required for the convergence analysis. 
\begin{lemma} \label{gama decrement properties}
For the approximate decrement in \cref{gama decrement} it holds that
\begin{enumerate}[label=(\roman*)]
 \item $\hat{\lambda}(\w{x}_{h,k})^2 = - \nabla f_h (\w{x}_{h,k})^T \hat{\w{d}}_{h,k},$
 \item $\hat{\lambda}(\w{x}_{h,k})^2 = \hat{\w{d}}_{h,k}^T \nabla^2 f_h(\w{x}_{h,k}) \hat{\w{d}}_{h,k} = \| \hat{\w{d}}_{h,k} \|_{\w{x}_{h,k}}^2,$
 \item $\hat{\lambda}(\w{x}_{h,k})^2 =  \hat{\w{d}}_{h,k}^T \nabla^2 f_h(\w{x}_{h,k}) \w{d}_{h,k},$
\end{enumerate}
where $\hat{\w{d}}_{h,k}$ is defined in \cref{galerkin d_h}, $\w{d}_{h,k}$ is the Newton direction and $\| \cdot \|_{\w{x}_{h,k}}$ in \cref{definition norm_x}. 
\end{lemma}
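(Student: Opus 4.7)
The plan is to derive each of the three identities by direct algebraic manipulation, pulling in the closed-form expression for $\hat{\w{d}}_{h,k}$ from \cref{galerkin d_h}, the definition of $\w{Q}_H(\w{x}_{h,k})$ in \cref{definition Q_H}, and crucially the relation $\w{R}=\w{P}^T$ that comes from \cref{assumption P} under the convention $\sigma=1$. All three statements reduce to transposing one of the two factors so that $\w{P}^T$ can be swapped for $\w{R}$, then exploiting the cancellations this creates.

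For part (i), I would start from the right-hand side and substitute $\hat{\w{d}}_{h,k}=-\w{P}[\w{Q}_H(\w{x}_{h,k})]^{-1}\w{R}\nabla f_h(\w{x}_{h,k})$. The two minus signs cancel; grouping $\nabla f_h(\w{x}_{h,k})^T\w{P}$ as $(\w{P}^T\nabla f_h(\w{x}_{h,k}))^T=(\w{R}\nabla f_h(\w{x}_{h,k}))^T$ yields exactly the definition \cref{gama decrement} of $\hat\lambda(\w{x}_{h,k})^2$.

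For part (ii), I would again substitute the closed form of $\hat{\w{d}}_{h,k}$ in both slots of $\hat{\w{d}}_{h,k}^T\nabla^2 f_h(\w{x}_{h,k})\hat{\w{d}}_{h,k}$. The middle factor becomes $[\w{Q}_H]^{-1}\w{P}^T\nabla^2 f_h(\w{x}_{h,k})\w{P}[\w{Q}_H]^{-1}$; replacing $\w{P}^T$ by $\w{R}$ recognises $\w{P}^T\nabla^2 f_h(\w{x}_{h,k})\w{P}=\w{Q}_H(\w{x}_{h,k})$, which cancels against one $[\w{Q}_H]^{-1}$ and leaves $\hat\lambda(\w{x}_{h,k})^2$. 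The second equality $\hat{\w{d}}_{h,k}^T\nabla^2 f_h(\w{x}_{h,k})\hat{\w{d}}_{h,k}=\|\hat{\w{d}}_{h,k}\|_{\w{x}_{h,k}}^2$ is immediate from the definition \cref{definition norm_x}.

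For part (iii), I would substitute $\w{d}_{h,k}=-[\nabla^2 f_h(\w{x}_{h,k})]^{-1}\nabla f_h(\w{x}_{h,k})$ and $\hat{\w{d}}_{h,k}$ into $\hat{\w{d}}_{h,k}^T\nabla^2 f_h(\w{x}_{h,k})\w{d}_{h,k}$; the Hessian and its inverse cancel, leaving $(\w{R}\nabla f_h(\w{x}_{h,k}))^T[\w{Q}_H(\w{x}_{h,k})]^{-1}\w{P}^T\nabla f_h(\w{x}_{h,k})$, and one more application of $\w{P}^T=\w{R}$ produces $\hat\lambda(\w{x}_{h,k})^2$. There is no real obstacle here; the only point requiring care is tracking transposes and signs, and being explicit about the invocation of $\w{R}=\w{P}^T$ so that the reader sees that every cancellation is a consequence of the Galerkin construction rather than an unjustified identification.
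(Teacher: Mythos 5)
Your proposal is correct and follows essentially the same route as the paper, which simply states that all three identities follow by direct substitution of the definitions of $\hat{\w{d}}_{h,k}$ and $\w{d}_{h,k}$; your write-up merely makes explicit the cancellations and the use of $\w{R}=\w{P}^T$ (from \cref{assumption P} with $\sigma=1$), which is a harmless elaboration of the same argument.
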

\begin{proof}
The results can be shown by direct replacement of the definitions of $\hat{\w{d}}_{h,k}$ and $\w{d}_{h,k}$ respectively. \qed
\end{proof}
Next, using the update rule in \cref{gama scheme} we derive bounds for the Hessian matrix.

\begin{lemma} \label{proposition f}
Let $f_h : \mathbb{R}^N \rightarrow \mathbb{R}$ satisfy \cref{ass self-conc}. If $ \hat{\lambda}(\w{x}_{h,k})< \frac{1}
{t_{h,k}}$, for the iterative scheme \cref{gama scheme} we have that
\begin{enumerate}[label=(\roman*)]
 \item $\nabla^2 f_h(\w{x}_{h,k+1}) \preceq \frac{1}{(1 - t_{h,k}\hat{\lambda}(\w{x}_{h,k}))^2} \nabla^2 f_h(\w{x}_{h,k}),$ 
 \label{proposition f case i}
 \item $[\nabla^2 f_h(\w{x}_{h,k+1})]^{-1} \preceq  \frac{1}{( 1 - t_{h,k}\hat{\lambda}(\w{x}_{h,k}))^2} 
 [\nabla^2 f_h(\w{x}_{h,k})]^{-1}$. \label{proposition f case ii}
\end{enumerate}
\end{lemma}
\begin{proof}
Consider the case \textit{\ref{proposition f case i}}. From the upper bound in \cref{relation sandwitz} that arise for self-concordant functions we have that,
\begin{align*}
 \nabla^2 f_h(\w{x}_{h,k+1}) & \preceq \frac{1}{(1 - t_{h,k} \| \hat{\w{d}}_{h,k} \|_{\w{x}_{h,k}})^2}\nabla^2 f_h(\w{x}_{h,k}) \\
			      & = \frac{1}{(1 - t_{h,k}\hat{\lambda}(\w{x}_{h,k}))^2} \nabla^2 f_h(\w{x}_{h,k}).
\end{align*}
which holds for $\hat{\lambda}(\w{x}_{h,k})< 1/t_{h,k}$ as claimed. As for the case \textit{\ref{proposition f case ii}}, we make use of the lower bound in \cref{relation sandwitz}, and thus, for $\hat{\lambda}(\w{x}_{h,k})< 1/t_{h,k}$,
\begin{equation*}
 \nabla^2 f_h(\w{x}_{h,k+1}) \succeq (1 - t_{h,k}\hat{\lambda}(\w{x}_{h,k}))^2 \nabla^2 f_h(\w{x}_{h,k}).
\end{equation*}
Since, further, $f_h$ is strictly convex we take,  
\begin{equation*}
 [\nabla^2 f_h(\w{x}_{h,k+1})]^{-1} \preceq  \frac{1}{(1 - t_{h,k}\hat{\lambda}(\w{x}_{h,k}))^2} [\nabla^2 f_h(\w{x}_{h,k})]^{-1},
\end{equation*}
which concludes the proof. \qed

\end{proof}

Similarly, we can obtain analogous bounds for the reduced Hessian matrix.

\begin{lemma} \label{proposition Q}
Let $f_h : \mathbb{R}^N \rightarrow \mathbb{R}$ satisfy \cref{ass self-conc}. If $ \hat{\lambda}(\w{x}_{h,k})< \frac{1}{t_{h,k}}$, for the iterative scheme \cref{gama scheme} we have that
\begin{enumerate}[label=(\roman*)]
 \item $\w{Q}_H(\w{x}_{h,k+1}) \preceq \frac{1}{(1 - t_{h,k}\hat{\lambda}(\w{x}_{h,k}))^2} \w{Q}_H(\w{x}_{h,k})$, \label{proposition Q case i}
 \item $[\w{Q}_H(\w{x}_{h,k+1})]^{-1} \preceq  \frac{1}{(1 - t_{h,k}\hat{\lambda}(\w{x}_{h,k}))^2} 
 [\w{Q}_H(\w{x}_{h,k})]^{-1}$. \label{proposition Q case ii}
\end{enumerate}
\end{lemma}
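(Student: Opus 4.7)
The plan is to derive both inequalities as essentially direct corollaries of \cref{proposition f}, exploiting the fact that $\w{Q}_H(\w{x}) = \w{R} \nabla^2 f_h(\w{x}) \w{P}$ is obtained from $\nabla^2 f_h(\w{x})$ by a fixed congruence transformation $\w{A} \mapsto \w{R} \w{A} \w{P}$ (with $\w{P} = \w{R}^T$ under \cref{assumption P}). The key linear-algebraic fact I will use is that the Loewner order is preserved under this transformation: if $\w{A} \preceq \w{B}$ then $\w{R} \w{A} \w{R}^T \preceq \w{R} \w{B} \w{R}^T$, since for every $\w{y} \in \mathbb{R}^n$ the vector $\w{R}^T \w{y} \in \mathbb{R}^N$ witnesses $\w{y}^T \w{R}(\w{B}-\w{A}) \w{R}^T \w{y} \geq 0$.

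For part \ref{proposition Q case i}, I would start from the upper-bound inequality already established in \cref{proposition f}\ref{proposition f case i}, namely
\begin{equation*}
 \nabla^2 f_h(\w{x}_{h,k+1}) \preceq \frac{1}{(1-t_{h,k}\hat{\lambda}(\w{x}_{h,k}))^2}\,\nabla^2 f_h(\w{x}_{h,k}),
\end{equation*}
and apply the congruence by $\w{R}$ and $\w{P}=\w{R}^T$ on the left and right, respectively. This immediately produces the desired inequality on $\w{Q}_H$, because the scalar factor $(1-t_{h,k}\hat{\lambda}(\w{x}_{h,k}))^{-2}$ commutes through.

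For part \ref{proposition Q case ii}, a direct approach via the inverse Hessian will not work, because $[\w{Q}_H(\w{x})]^{-1}$ is \emph{not} the reduction $\w{R}[\nabla^2 f_h(\w{x})]^{-1}\w{P}$ of the inverse Hessian, so \cref{proposition f}\ref{proposition f case ii} cannot simply be congruence-transformed. Instead, I would begin from the \emph{lower} bound in \cref{relation sandwitz} (the same bound that yielded \cref{proposition f}\ref{proposition f case ii}), obtaining
\begin{equation*}
 \nabla^2 f_h(\w{x}_{h,k+1}) \succeq (1-t_{h,k}\hat{\lambda}(\w{x}_{h,k}))^2\,\nabla^2 f_h(\w{x}_{h,k}),
\end{equation*}
and congruence-transform this to get $\w{Q}_H(\w{x}_{h,k+1}) \succeq (1-t_{h,k}\hat{\lambda}(\w{x}_{h,k}))^2 \w{Q}_H(\w{x}_{h,k})$. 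Then, invoking \cref{proposition Q_H pd} (so both matrices are positive definite and hence invertible), I invert both sides, which reverses the Loewner order, to conclude $[\w{Q}_H(\w{x}_{h,k+1})]^{-1} \preceq (1-t_{h,k}\hat{\lambda}(\w{x}_{h,k}))^{-2}[\w{Q}_H(\w{x}_{h,k})]^{-1}$.

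There is essentially no hard step here; the only point requiring a brief check is the domain condition $\hat{\lambda}(\w{x}_{h,k}) < 1/t_{h,k}$, which is exactly what is needed so that the sandwich bound \cref{relation sandwitz} applies to the univariate function $\phi(t) = f_h(\w{x}_{h,k} + t\hat{\w{d}}_{h,k})$ at the displacement producing $\w{x}_{h,k+1}$, using $\|\hat{\w{d}}_{h,k}\|_{\w{x}_{h,k}} = \hat{\lambda}(\w{x}_{h,k})$ from \cref{gama decrement properties}(ii). This is precisely the same setup already employed in \cref{proposition f}, so no new argument is needed.
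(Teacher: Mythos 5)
Your proposal is correct and follows essentially the same route as the paper: the paper likewise obtains part (i) by applying the $\w{R}\,(\cdot)\,\w{P}$ congruence to the upper bound from \cref{proposition f}, and part (ii) by congruence-transforming the lower sandwich bound to get $\w{Q}_H(\w{x}_{h,k+1}) \succeq (1-t_{h,k}\hat{\lambda}(\w{x}_{h,k}))^2 \w{Q}_H(\w{x}_{h,k})$ and then inverting via the positive definiteness guaranteed by \cref{proposition Q_H pd}. Your explicit remark that $[\w{Q}_H]^{-1}$ is not the reduction of the inverse Hessian, and your verification of the Loewner-order preservation and of the domain condition, are just slightly more detailed versions of the paper's terse ``by strict convexity and \cref{assumption P}'' step.
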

\begin{proof}
We already know that,
\begin{equation*}
\nabla^2 f_h(\w{x}_{h,k+1}) \preceq \frac{1}{(1 - t_{h,k}\hat{\lambda}(\w{x}_{h,k}))^2} \nabla^2 f_h(\w{x}_{h,k}).
\end{equation*}
By strict convexity and \cref{assumption P} we see that,
\begin{equation*}
 \w{Q}_H(\w{x}_{h,k+1}) \preceq \frac{1}{(1 - t_{h,k}\hat{\lambda}(\w{x}_{h,k}))^2} \w{Q}_H(\w{x}_{h,k}),
\end{equation*}
which is exactly the bound in case \textit{\ref{proposition Q case i}}. Next, recall that,
\begin{equation*}
 \nabla^2 f_h(\w{x}_{h,k+1}) \succeq (1 - t_{h,k}\hat{\lambda}(\w{x}_{h,k}))^2 \nabla^2 f_h(\w{x}_{h,k}),
\end{equation*}
and, again, by strict convexity and \cref{assumption P} we have that,
\begin{equation*}
 \w{Q}_H(\w{x}_{h,k+1}) \succeq (1 - t_{h,k}\hat{\lambda}(\w{x}_{h,k}))^2 \w{Q}_H(\w{x}_{h,k}).
\end{equation*}
In addition, using the above relation and \cref{proposition Q_H pd} we can obtain the bound in case \textit{\ref{proposition Q case ii}}. Finally, all the bounds hold for $\hat{\lambda}(\w{x}_{h,k})< 1/t_{h,k}$ which concludes the proof. \qed
\end{proof}

In our analysis, we will further make use of two bounds that hold for self-concordant functions.  We only state the results, the the proofs can be found in \cite{MR2061575}.
\begin{lemma}{\cite{MR2061575}} \label{proposition phi}
Let $\phi : \mathbb{R} \rightarrow \mathbb{R}$ be a strictly convex self-concordant function. Then,
\begin{enumerate}[label=(\roman*)]
 \item $ \phi(t) \leq \phi(0) + t\phi'(0)  - t \phi''(0)^{1/2} - \log(1-t\phi''(0)^{1/2}), \ \ \
 t \leq \phi''(0)^{-1/2}, $ \label{proposition phi case i}
 \item $ \phi(t) \geq \phi(0) + t\phi'(0)  + t \phi''(0)^{1/2} - \log(1+t\phi''(0)^{1/2}), \ \ \ t \geq 0 $. \label{proposition phi case ii}
\end{enumerate}
\end{lemma}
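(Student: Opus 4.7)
The statement is essentially the ``univariate merit function bound'' for self-concordant functions, and the natural plan is to derive both inequalities by integrating the sandwich bound on $\phi''$ already recorded as \cref{relation univariate sandwitz}. Since \cref{ass self-conc} fixes $M_\phi = 2$, the definition \cref{selfconcord definition} gives $|\phi'''(t)| \leq 2\,\phi''(t)^{3/2}$, which is exactly the condition needed for the argument $|\tfrac{d}{dt}(\phi''(t)^{-1/2})| \leq 1$ that the paper has already invoked to produce \cref{relation univariate sandwitz}. So I will take \cref{relation univariate sandwitz} as my starting point and simply integrate it twice.

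For part \emph{(i)}, I would fix $t\in[0,\phi''(0)^{-1/2})$ and start from the upper bound
\[
\phi''(s) \;\leq\; \frac{\phi''(0)}{(1-s\,\phi''(0)^{1/2})^2}, \qquad s\in[0,t].
\]
Integrating from $0$ to $t$ with the substitution $u = 1 - s\,\phi''(0)^{1/2}$ yields
\[
\phi'(t)-\phi'(0) \;\leq\; \phi''(0)^{1/2}\!\left(\frac{1}{1-t\,\phi''(0)^{1/2}}-1\right).
\]
Integrating once more from $0$ to $t$, and using $\int_0^t \tfrac{ds}{1-s\,\phi''(0)^{1/2}} = -\phi''(0)^{-1/2}\log(1-t\,\phi''(0)^{1/2})$, I recover
\[
\phi(t) \;\leq\; \phi(0) + t\,\phi'(0) - t\,\phi''(0)^{1/2} - \log\!\bigl(1-t\,\phi''(0)^{1/2}\bigr),
\]
which is exactly \emph{(i)}. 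The range $t\leq \phi''(0)^{-1/2}$ is inherited from the domain of validity of the upper sandwich bound.

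For part \emph{(ii)}, the argument is symmetric but uses the lower bound in \cref{relation univariate sandwitz}, which is valid for all $s\geq 0$. Integrating
\[
\phi''(s) \;\geq\; \frac{\phi''(0)}{(1+s\,\phi''(0)^{1/2})^2}
\]
from $0$ to $t$ gives $\phi'(t)-\phi'(0) \geq \phi''(0)^{1/2}\!\left(1 - \tfrac{1}{1+t\,\phi''(0)^{1/2}}\right)$, and a second integration, using $\int_0^t \tfrac{ds}{1+s\,\phi''(0)^{1/2}} = \phi''(0)^{-1/2}\log(1+t\,\phi''(0)^{1/2})$, produces
\[
\phi(t) \;\geq\; \phi(0) + t\,\phi'(0) + t\,\phi''(0)^{1/2} - \log\!\bigl(1+t\,\phi''(0)^{1/2}\bigr),
\]
which is \emph{(ii)}.

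There is no real obstacle here; the only thing to be careful about is bookkeeping of signs in the two substitutions $u = 1 \pm s\,\phi''(0)^{1/2}$ and checking that the log terms land with the correct sign (in particular, the antiderivative of $(1-s\alpha)^{-1}$ carries a minus sign that explains why the upper bound has $-\log(1-\cdot)$ rather than $+\log$). A minor subtlety worth mentioning is the degenerate case $\phi''(0)=0$: both inequalities reduce by a Taylor expansion of $\log$ to $\phi(t)=\phi(0)+t\,\phi'(0)$, consistent with $\phi$ being affine there by self-concordance, so no separate argument is needed. All other algebraic manipulations are routine and the conclusion follows directly from \cref{relation univariate sandwitz}.
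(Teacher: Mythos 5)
Your proof is correct and follows exactly the route the paper indicates: the paper's own (one-line) proof says both inequalities follow from \cref{relation univariate sandwitz} and defers the details to the cited reference, and those details are precisely your double integration of the sandwich bound. Nothing further is needed.
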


\begin{proof}
Both inequalities can be proved using relation (\ref{relation univariate sandwitz}), for details see \cite{MR2061575}.\qed
\end{proof}

\section{Appendix: Proofs of Theorems and Lemmas}

\begin{proof}[Proof of \cref{prop select mu}]
Note that $\mu \in (0, \min \{ 1, \frac{\nu}{\lambda(\w{x}_{h,0})} \} )$ implies $\nu > \mu \lambda(\w{x}_{h,0}) $. Thus, $\hat{\lambda}(\w{x}_{h,k}) > \nu > \mu \lambda(\w{x}_{h,0}) \geq \cdots \geq \mu \lambda(\w{x}_{h,k})$, as required. \qed
\end{proof}

\begin{proof}[Proof of \cref{lemma upper bound lamda_hat}]
By the definition of $\hat{\lambda}(\w{x}_{h,k})$ in \cref{gama decrement} we have that,
\begin{align*}
     \hat{\lambda}(\w{x}_{h,k}) & = \left[  \nabla f_h (\w{x}_{h,k})^T \w{P} [\w{R} \nabla^2 f_h(\w{x}_{h,k}) \w{P}]^{-1} \w{R} \nabla f_h (\w{x}_{h,k}) \right]^{1/2}.\\
     & = \left[  \nabla f_h (\w{x}_{h,k})^T \nabla^2 f_h(\w{x}_{h,k})^{-\frac{1}{2}} \Pi \nabla^2f_h(\w{x}_{h,k})^{-\frac{1}{2}} \nabla f_h (\w{x}_{h,k}) \right]^{1/2}, \\
\end{align*}
where $ \Pi =  (\nabla^2 f_h(\w{x}_{h,k})^\frac{1}{2} \w{P}) [(\nabla^2 f_h(\w{x}_{h,k})^\frac{1}{2} \w{P})^T (\nabla^2 f_h(\w{x}_{h,k})^\frac{1}{2} \w{P})]^{-1} (\nabla^2 f_h(\w{x}_{h,k})^{\frac{1}{2}} \w{P})^T $ is the orthogonal projection onto the $\operatorname{span}\{ [\nabla^2 f_h(\w{x}_{h,k})]^{\frac{1}{2}} \w{P}\}$. By the idepotency of the orthogonal projection we have that,
\begin{align*}
     \hat{\lambda}(\w{x}_{h,k}) & = \left\langle \Pi \nabla^2 f_h(\w{x}_{h,k})^{-\frac{1}{2}} \nabla f_h (\w{x}_{h,k}), \Pi \nabla^2 f_h(\w{x}_{h,k})^{-\frac{1}{2}} \nabla f_h (\w{x}_{h,k}) \right\rangle^{1/2} \\
     & \leq \left\| \Pi \right\|  \left\| \nabla^2 f_h(\w{x}_{h,k})^{-\frac{1}{2}} \nabla f_h (\w{x}_{h,k}) \right\|.
\end{align*}
Since $\sigma_1(\Pi) = 1$ we take $\hat{\lambda}(\w{x}_{h,k}) \leq \lambda(\w{x}_{h,k})$ as required. \qed
\end{proof}

Let $\phi(t_{h,k}) := f_h(\w{x}_{h,k} + t_{h,k} \w{\hat{d}}_{h,k})$. It is easy to show that,
\begin{equation*}
 \phi'(0) = - \hat{\lambda}(\w{x}_k)^2 \quad \text{and} \quad \phi''(0) =  \hat{\lambda}(\w{x}_k)^2,
\end{equation*}
where by definition $\phi : \mathbb{R} \rightarrow \mathbb{R} $ satisfies \cref{ass self-conc}.

\begin{proof}[Proof of \cref{lemma411}]
By \cref{proposition phi} we have that,
\begin{align*}
 \phi(t_{h,k}) & \leq \phi(0) + t_{h,k} \phi'(0) - t_{h,k} \phi''(0)^{1/2}- \log \left(1 - t_{h,k} \phi''(0)^{1/2}\right) \\
	 &  =  \underbrace{ \phi(0) - t_{h,k} \hat{\lambda}(\w{x}_{h,k})^2 - t_{h,k} \hat{\lambda}(\w{x}_{h,k}) - \log \left(1 - t_{h,k} 
	 \hat{\lambda}(\w{x}_{h,k})\right)}_{h(t_{h,k})},
\end{align*}
which is valid for $t_{h,k} < \frac{1}{\hat{\lambda}(\w{x}_{h,k})}$. Note that $h(t_{h,k})$ is minimized at $t^*_h = 
\frac{1}{1 + \hat{\lambda}(\w{x}_{h,k})}$ and thus, 
\begin{align*}
 \phi(t^*_h) &  \leq \phi(0) -  \frac{\hat{\lambda}(\w{x}_{h,k})^2}{1 + \hat{\lambda}(\w{x}_{h,k})} - \frac{\hat{\lambda}(\w{x}_{h,k})}{1 + 
 \hat{\lambda}(\w{x}_{h,k})} 
 - \log \left(1 - \frac{\hat{\lambda}(\w{x}_{h,k})^2}{1 + \hat{\lambda}(\w{x}_{h,k})} \right) \\
	   &= \phi(0) - \hat{\lambda}(\w{x}_{h,k}) + \log \left( 1 + \hat{\lambda}(\w{x}_{h,k}) \right).
\end{align*}
Using the inequality,
\begin{equation*}
 -x + \log(1+x) \leq - \frac{x^2}{2(1+x)},
\end{equation*}
for any $x>0$, we obtain the following upper bound for $\phi(t_h^*)$,
\begin{align*}
 \phi(t^*_h) &\leq \phi(0) - \frac{\hat{\lambda}(\w{x}_{h,k})^2}{2(1 + \hat{\lambda}(\w{x}_{h,k}))} \\
	   & \leq \phi(0) - \alpha t^*_h  \hat{\lambda}(\w{x}_{h,k})^2 \\
	   & = \phi(0) + \alpha t^*_h \nabla f^T_{h}(\w{x}_{h,k}) \w{\hat{d}}_{h,k}.
\end{align*}
Thus $t^*_h$ satisfies the back-tracking line search exit condition which means that it will always return a step size $t_{h,k} > \beta / (1 + \hat{\lambda}(\w{x}_{h,k})) $. Therefore,
\begin{equation*}
f_h(\w{x}_{h,k} + t_{h,k} \w{\hat{d}}_{h,k}) - f_h(\w{x}_{h,k}) \leq - \alpha \beta \frac{\hat{\lambda}(\w{x}_{h,k})^2}{1 + \hat{\lambda}(\w{x}_{h,k})}.
\end{equation*}
Additionally, since $\hat{\lambda}(\w{x}_{h,k}) > \eta$ and using the fact that the function $x \rightarrow \frac{x^2}{1 + x} $ is monotone 
increasing for any $x>0$, we have that,
\begin{equation*}
 f_h(\w{x}_{h,k} + t_{h,k} \w{\hat{d}}_{h,k}) - f_h(\w{x}_{h,k}) \leq - \alpha \beta \frac{\eta^2}{1 + \eta}.
\end{equation*}
which concludes the proof by setting $\gamma = \alpha \beta \eta^2 / (1 + \eta)$. \qed
\end{proof}

\begin{proof}[Proof of \cref{lemma412}]
Using the second inequality in \cref{proposition phi} we obtain the following bound, 
\begin{align*}
 \phi(t_{h,k}) & \geq \phi(0) + t_{h,k} \phi'(0) + t_{h,k} \phi''(0)^{1/2} - \log(1 + t_{h,k} \phi'(0)^{1/2}) \\
	       & = \underbrace{\phi(0) - t_{h,k} \hat{\lambda}(\w{x}_{h,k})^2 + t_{h,k} \hat{\lambda}(\w{x}_{h,k}) - \log(1 + t_{h,k} \hat{\lambda}(\w{x}_{h,k}))}_{g(t_{h,k})},  
\end{align*}
which is true for any $t_{h,k} \geq 0$. Moreover, the function $g(t_{h,k})$ is minimized at $t^*_h = 1/(1 - \hat{\lambda}(\w{x}_{h,k}))$ and thus,
\begin{align*}
 \underset{t_{h,k}\geq0}{\operatorname{inf}} \{ \phi(t_{h,k}) \} & \geq \phi(0) - \frac{\hat{\lambda}(\w{x}_{h,k})^2}{1 - \hat{\lambda}(\w{x}_{h,k})} + 
 \frac{\hat{\lambda}(\w{x}_{h,k})}{1 - \hat{\lambda}(\w{x}_{h,k})} - \log(1 + \frac{\hat{\lambda}(\w{x}_{h,k})}{1 - \hat{\lambda}(\w{x}_{h,k})}) \\
 & = \phi(0) + \hat{\lambda}(\w{x}_{h,k}) + \log(1 - \hat{\lambda}(\w{x}_{h,k})),
\end{align*}
which is valid since, by assumption, $\hat{\lambda}(\w{x}_{h,k}) < 1$. Then, $ f_h(\w{x}_h^*) \geq f_h(\w{x}_{h,k}) - \omega_*(\hat{\lambda}(\w{x}_{h,k}))$, and thus the upper bound is proved.

Furthermore, in view of \cref{lemma411}, we have that,
\begin{align*}
\underset{t_{h,k}\geq0}{\operatorname{inf}} \{ \phi(t_{h,k}) \} & \leq    \underset{t_{h,k}\geq0}{\operatorname{inf}} \Big\{ \phi(0) - t_{h,k} \hat{\lambda}(\w{x}_{h,k})^2 - t_{h,k} \hat{\lambda}(\w{x}_{h,k}) - \log \left(1 - t_{h,k} \hat{\lambda}(\w{x}_{h,k})\right) \Big\} \\
 &= \phi(0) - \hat{\lambda}(\w{x}_{h,k}) + \log \left( 1 + \hat{\lambda}(\w{x}_{h,k}) \right).
\end{align*}
Then, $ f_h(\w{x}_h^*) \leq f_h(\w{x}_{h,k}) - \omega(\hat{\lambda}(\w{x}_{h,k}))$, and thus the lower bound is proved which concludes the proof of the lemma. \qed
\end{proof}

\begin{proof}[Proof of \cref{lemma suboptimality gap}]
Combining the upper bound in \cref{lemma412}, the result in \cref{lemma upper bound lamda_hat} and since $\omega_*(x)$ in \cref{omegas} is monotone increasing we have that
\begin{equation*}
     f_h(\w{x}_{h,k}) - f_h(\w{x}_h^*) \leq \omega_*(\lambda(\w{x}_{h,k})),
\end{equation*}
which holds for $\lambda(\w{x}_{h,k}) < 1$. Further, since $ \omega_*(x) \leq x^2, \ x \in [0, 0.68] $ the claim follows if $\lambda(\w{x}_{h,k}) \leq 0.68$. \qed
\end{proof}

\begin{proof}[Proof of \cref{lemma415}]
From \cref{lemma411} recall that,
\begin{equation*}
 \phi(t_{h,k}) \leq \phi(0) - t_{h,k} \hat{\lambda}(\w{x}_{h,k})^2 - t_{h,k} \hat{\lambda}(\w{x}_{h,k}) - \log \left(1 - t_{h,k}
 \hat{\lambda}(\w{x}_{h,k})\right)
\end{equation*}
which is valid for  $\hat{\lambda}(\w{x}_{h,k}) < 1 / t_{h,k}$. Setting $t_{h,k}=1$ we have that, 
\begin{equation*}
  \phi(1) \leq \phi(0) -  \hat{\lambda}(\w{x}_{h,k})^2 - \hat{\lambda}(\w{x}_{h,k}) - \log \left(1 - \hat{\lambda}(\w{x}_{h,k})\right)
\end{equation*}
with $\hat{\lambda}(\w{x}_{h,k}) < 1 $. Further, as in \cite{MR2061575}, making use of the inequality
\begin{equation*}
 -x - \log(1-x) \leq \frac{1}{2} x^2 + x^3, \quad x \in [0, 0.81]
\end{equation*}
we get,
\begin{align*}
 \phi(1) & \leq \phi(0) - \frac{1}{2} \hat{\lambda}(\w{x}_{h,k})^2 + \hat{\lambda}(\w{x}_{h,k})^3 
	 = \phi(0) - \frac{1}{2} \left( 1 - 2 \hat{\lambda}(\w{x}_{h,k}) \right) \hat{\lambda}(\w{x}_{h,k})^2
\end{align*}
which holds for $\hat{\lambda}(\w{x}_{h,k}) \leq 0.81$. Setting $\alpha \leq \frac{1}{2} (1 - 2\hat{\lambda}(\w{x}_{h,k}))$ we obtain, 
\begin{equation*} \label{unit step ineq}
 f_h(\w{x}_{h,k} + \w{\hat{d}}_{h,k}) \leq f_h(\w{x}_{h,k}) - \alpha \hat{\lambda}(\w{x}_{h,k})^2,
\end{equation*}
which satisfies the backtracking line search condition for $t_{h,k} = 1$ and $\hat{\lambda}(\w{x}_{h,k}) \leq \frac{1}{2} (1 - 2\alpha)$, which concludes the proof. \qed
\end{proof}

\begin{proof}[Proof of \cref{thm422}]
By the definition of the approximate decrement we have that,
\begin{equation*}
 \hat{\lambda}(\w{x}_{h,k}) =  \sqrt{\nabla f_h (\w{x}_{h,k})^T \w{P} [\w{Q}_{H}(\w{x}_{h,k}) ]^{-1} \w{R} \nabla f_h (\w{x}_{h,k})},
\end{equation*}
In addition, from \cref{proposition Q}, and since $t_{h,k} = 1$, we get,
\begin{equation*}
  [\w{Q}_{H}(\w{x}_{h,k+1})]^{-1} \preceq  \frac{1}{(1-\hat{\lambda}(\w{x}_{h,k}))^2}  [\w{Q}_{H}(\w{x}_{h,k})]^{-1},
\end{equation*}
which holds since, by assumption, $\hat{\lambda}(\w{x}_{h,k})<1$. Using this relation into the definition of $\hat{\lambda}(\w{x}_{h,k+1})$ above, 
we have that,
\begin{align*}
\hat{\lambda}(\w{x}_{h,k+1})  & \leq \frac{1}{1-\hat{\lambda}(\w{x}_{h,k})} \left\| [\w{Q}_{H}(\w{x}_{h,k})]^{-1/2} \w{R} \nabla f_h (\w{x}_{h,k+1}) \right\|_2.
\end{align*}
Further, observe that $\nabla f_h (\w{x}_{h,k+1}) = \int_0^1 \nabla^2 f_h ({\w{x}_{h,k}} + y \w{\hat{d}}_{h,k} ) \w{\hat{d}}_{h,k} \ dy + 
\nabla f_h (\w{x}_{h,k}) $ and thus,
\begin{equation} \label{a1 a2 ineq}
 \hat{\lambda}(\w{x}_{h,k+1}) \leq \frac{1}{1-\hat{\lambda}(\w{x}_{h,k})} \left\| \w{A}_1 + \w{A}_2 \right\|_2,
\end{equation}
where we denote $\w{A}_1 := [\w{Q}_{H}(\w{x}_{h,k})]^{-1/2} \w{R} \int_0^1 \nabla^2 f_h ({\w{x}_{h,k}} + y \w{\hat{d}}_{h,k} ) \w{\hat{d}}_{h,k} \ dy$
and \\ $\w{A}_2 := [\w{Q}_{H}(\w{x}_{h,k})]^{-1/2} \w{R} \nabla f_h (\w{x}_{h,k})$.
By the definitions of the coarse step in \cref{galerkin d_h} and \cref{galerkin d_H}, we have that $\w{\hat{d}}_{h,k} = \w{P} \w{\hat{d}}_{H,k}$, 
and thus, using simple algebra, $\w{A}_1$ and $\w{A}_2$ become,
\begin{align*}
 \w{A}_1 & = [\w{Q}_{H}(\w{x}_{h,k})]^{-1/2} \int_0^1 \w{Q}_{H}(\w{x}_{h,k}  + y \w{\hat{d}}_{h,k} ) \w{\hat{d}}_{H,k} \ dy \\
     & = \int_0^1 [\w{Q}_{H}(\w{x}_{h,k})]^{-1/2}  \w{Q}_{H}(\w{x}_{h,k}  + y \w{\hat{d}}_{h,k} ) [\w{Q}_{H}(\w{x}_{h,k})]^{-1/2} \
     dy \  [\w{Q}_{H}(\w{x}_{h,k})]^{1/2} \w{\hat{d}}_{H,k}, 
\end{align*}
and,
\begin{align*}
 \w{A_2} & = [\w{Q}_{H}(\w{x}_{h,k})]^{1/2} [\w{Q}_{H}(\w{x}_{h,k})]^{-1} \w{R} \nabla f_h (\w{x}_{h,k}) \\
	 & = - [\w{Q}_{H}(\w{x}_{h,k})]^{1/2} \w{\hat{d}}_{H,k},
\end{align*}
respectively. Then,
\begin{align*}
    & \w{A_1} + \w{A_2} = \\
    & \int_0^1 \left( [\w{Q}_{H}(\w{x}_{h,k})]^{-1/2}  \w{Q}_{H}(\w{x}_{h,k}  + y \w{\hat{d}}_{h,k} ) [\w{Q}_{H}(\w{x}_{h,k})]^{-1/2} - \w{I} \right)
     dy \  [\w{Q}_{H}(\w{x}_{h,k})]^{1/2} \w{\hat{d}}_{H,k}
\end{align*}
From \cref{proposition Q}\textit{\ref{proposition Q case i}}, we take $\w{Q}_{H}(\w{x}_{h,k}  +
y \w{\hat{d}}_{h,k} ) \preceq  \frac{1}{(1-y \hat{\lambda}(\w{x}_{h,k}))^2} \w{Q}_{H}(\w{x}_{h,k})$, which is valid since $y \hat{\lambda}(\w{x}_{h,k}) < 1$, and so \cref{a1 a2 ineq} can be bounded as follows,
\begin{align*}
\hat{\lambda}(\w{x}_{h,k+1}) 
			     & \leq \frac{1}{1-\hat{\lambda}(\w{x}_{h,k})} \left\| \int_0^1 \left( \frac{1}{(1-y \hat{\lambda} (\w{x}_{h,k}))^2} 
			      - 1 \right) \w{I}_{n \times n} dy \ [\w{Q}_{H}(\w{x}_{h,k})]^{1/2} \w{\hat{d}}_{H,k}  \right\|_2 \\
			     & \leq \frac{1}{1-\hat{\lambda}(\w{x}_{h,k})} \left\| \int_0^1 \left( \frac{1}{(1-y \hat{\lambda}(\w{x}_{h,k}))^2} - 1 \right) 
			     dy \right\|_2 \left\| [\w{Q}_{H}(\w{x}_{h,k})]^{1/2} \w{\hat{d}}_{H,k}  \right\|_2, 
\end{align*}
where $\w{I}_{n \times n}$ denotes the $n \times n$ identity matrix. Note that, 
\begin{equation*}
 \int_0^1 \left( \frac{1}{(1-y \hat{\lambda}(\w{x}_{h,k}))^2} - 1 \right) dy = \frac{\hat{\lambda}(\w{x}_{h,k}))}{(1 - \hat{\lambda}(\w{x}_{h,k}))},
\end{equation*}
and also that,
\begin{align*}
 \left\| [\w{Q}_{H}(\w{x}_{h,k})]^{1/2} \w{\hat{d}}_{H,k}  \right\|_2 &= \left( \left( \w{P} \w{\hat{d}}_{H,k} \right)^T \nabla^2 f_h(\w{x}_{h,k}) 
 \w{P} \w{\hat{d}}_{H,k}\right)^{\frac{1}{2}} \\ 
    &= \left( \w{\hat{d}}_{h,k}^T \nabla^2 f_h(\w{x}_{h,k}) \w{\hat{d}}_{h,k} \right)^{\frac{1}{2}} \\
    &= \hat{\lambda}(\w{x}_{h,k}),
\end{align*}
which concludes the proof of the theorem by directly replacing both equalities into the last inequality of $\hat{\lambda}(\w{x}_{h,k+1})$. \qed
\end{proof}

\begin{proof}[Proof of \cref{thm quadratic coarse through fine}]
Applying the proof of \cref{thm422} with the Newton direction instead of the coarse direction, we can show that
 \begin{equation*}
  \hat{\lambda}(\w{x}_{h,k+1}) \leq  \frac{\lambda(\w{x}_{h,k})}{(1 - \lambda(\w{x}_{h,k}))^2} \hat{\lambda}(\w{x}_{h,k}).
 \end{equation*}        
  Since the fine step is taken, we have that either $\hat{\lambda}(x_k) = 0$ or  $\hat{\lambda}(x_k) \leq \mu \lambda(x_k)$. It makes sense to measure the reduction of $\hat{\lambda}(x_k)$ only in the second case. Assume that $\hat{\lambda}(x_k) > \nu$ and $\hat{\lambda}(x_k) \leq \mu \lambda(x_k)$. Then we immediately take  $\nu < \hat{\lambda}(\w{x}_{h,k}) \leq \mu \lambda(\w{x}_{h,k}) \leq \lambda(\w{x}_{h,k}) < 1 $. Therefore, there exists a sufficiently small and non-negative real number, say $\mu_{1,k}$, such that $\nu< \mu_{1,k} \lambda(\w{x}_{h,k}) \leq \hat{\lambda}(\w{x}_{h,k}) \leq \mu \lambda(\w{x}_{h,k})$, and thus $\frac{\nu}{\lambda(\w{x}_{h,k})} < \mu_{1,k} \leq \mu $. By the monotonicity of $\lambda(\w{x}_{h,k})$ we have that $\frac{\nu}{\lambda(\w{x}_{h,0})} < \mu_{1,k} \leq \mu $. Putting this all together we obtain
  \begin{equation*}
      \hat{\lambda}(\w{x}_{h,k+1}) \leq  \frac{\hat{\lambda}(\w{x}_{h,k})^2}{\mu_{1,k}(1 - \lambda(\w{x}_{h,k}))^2} \leq \frac{\lambda(\w{x}_{h,0})}{\nu (1 - \lambda(\w{x}_{h,k}))^2} \hat{\lambda}(\w{x}_{h,k})^2
  \end{equation*}  
  as claimed. \qed
\end{proof}

\begin{proof}[Proof of \cref{lemma norm equality lambda's d's}]
We have that
\begin{align*}
    &\left\| \ [\nabla^2 f_h ({\w{x}_{h,k}})]^{1/2} \left(\w{\hat{d}}_{h,k} - \w{d}_{h,k} \right) \right\|_2  = \\
     &  \sqrt{ \| \w{\hat{d}}_{h,k} \|_{\w{x}_{h,k}}^2  +
			   \| \w{d}_{h,k}\|_{\w{x}_{h,k}}^2 -  2\w{\hat{d}}_{h,k}^T \nabla^2 f_h ({\w{x}_{h,k}}) \w{d}_{h,k}},
\end{align*}
where $\| \cdot \|_{\w{x}_{h,k}}$ is defined in \cref{definition norm_x}. Using now the results from \cref{gama decrement properties} and the definition of the Newton decrement in \cref{newton decrement}, the claim follows. \qed
\end{proof}

\begin{proof}[Proof of \cref{lemma417}]
By the definition of the Newton decrement we have that,
\begin{equation*}
 \lambda(\w{x}_{h,k+1}) = \left[ \nabla f_h(\w{x}_{h,k+1})^T [ \nabla^2 f_h(\w{x}_{h,k+1})]^{-1} \nabla f_h(\w{x}_{h,k+1})  \right]^{1/2}.
\end{equation*}
Combining $t_{h,k} = 1$ and \cref{proposition f} we take, 
\begin{equation} \label{ineqlambda}
 \lambda(\w{x}_{h,k+1}) \leq \frac{1}{1-\hat{\lambda}(\w{x}_{h,k})} \left\| [\nabla^2 f_h ({\w{x}_{h,k}})]^{-1/2} \nabla f_h(\w{x}_{h,k+1}) \right\|_2.
\end{equation}
Denote $\w{Z} := [\nabla^2 f_h ({\w{x}_{h,k}})]^{-1/2} \nabla f_h(\w{x}_{h,k+1})$. Using the fact that 
\begin{equation*}
\nabla f_h(\w{x}_{h,k+1}) = \int_0^1 \nabla^2 f_h ({\w{x}_{h,k}} + y \w{\hat{d}}_{h,k}) \w{\hat{d}}_{h,k} \ dy + \nabla f_h(\w{x}_{h,k})
\end{equation*}
we see that,
\begin{align*}
 \w{Z} & = [\nabla^2 f_h ({\w{x}_{h,k}})]^{-1/2} \left( \int_0^1 \nabla^2 f_h ({\w{x}_{h,k}} + y \w{\hat{d}}_{h,k}) \w{\hat{d}}_{h,k} \ dy 
	       + \nabla f_h(\w{x}_{h,k})  \right) \\
       & =  \underbrace{\int_0^1 [\nabla^2 f_h ({\w{x}_{h,k}})]^{-1/2} \nabla^2 f_h ({\w{x}_{h,k}} +y\w{\hat{d}}_{h,k})
       [\nabla^2 f_h ({\w{x}_{h,k}})]^{-1/2} \ dy}_\w{T} \ [\nabla^2 f_h ({\w{x}_{h,k}})]^{1/2} \w{\hat{d}}_{h,k} \\ 
       & \qquad \qquad \qquad \qquad \qquad \qquad \qquad \qquad \qquad \qquad \qquad \qquad \qquad   -    [\nabla^2 f_h ({\w{x}_{h,k}})]^{1/2} \w{d}_{h,k}, \\
\end{align*}
where $\w{d}_{h,k}$ is the Newton direction. Next, adding and subtracting $\w{T}[\nabla^2 f_h ({\w{x}_{h,k}})]^{1/2} \w{d}_{h,k}$ we have that,
\begin{equation*}
    \w{Z} = \w{T}[\nabla^2 f_h ({\w{x}_{h,k}})]^{1/2} (\w{\hat{d}}_{h,k} - \w{d}_{h,k}) + (\w{T} - \w{I}_{N \times N})[\nabla^2 f_h ({\w{x}_{h,k}})]^{1/2} \w{d}_{h,k},
\end{equation*}
and thus,
\begin{equation} \label{ineq Zs}
    \left\| \w{Z} \right\| \leq \left \|\underbrace{\w{T}[\nabla^2 f_h ({\w{x}_{h,k}})]^{1/2} (\w{\hat{d}}_{h,k} - \w{d}_{h,k})}_{\w{Z}_1} \right\|_2 + \left\| \underbrace{(\w{T} - \w{I}_{N \times N})[\nabla^2 f_h ({\w{x}_{h,k}})]^{1/2} \w{d}_{h,k}}_{\w{Z}_2} \right\|_2 \\
\end{equation}
Using \cref{proposition f} and since, by assumption, 
$y \hat{\lambda}(\w{x}_{h,k})< 1$ we take,
\begin{equation*}
    [\nabla^2 f_h ({\w{x}_{h,k}})]^{-1/2} \nabla^2 f_h ({\w{x}_{h,k}} +y\w{\hat{d}}_{h,k})
       [\nabla^2 f_h ({\w{x}_{h,k}})]^{-1/2} \preceq  \frac{1}{\left(1 -y \hat{\lambda}(\w{x}_{h,k}) \right)^2} \w{I}_{N \times N}.
\end{equation*}
We are now in position to estimate both norms in \cref{ineq Zs}. For the first we have that,
\begin{align*}
    \left\| \w{Z}_1 \right\| & \leq \left\| \int_0^1 \frac{1}{\left(1 -y \hat{\lambda}(\w{x}_{h,k}) \right)^2} dy  \right\|_2  \left\| \ [\nabla^2 f_h ({\w{x}_{h,k}})]^{1/2} \left(
			   \w{\hat{d}}_{h,k} - \w{d}_{h,k} \right) \right\|_2 \\
			   & = \frac{1}{1 - \hat{\lambda}(\w{x}_{h,k}) } \left\| [\nabla^2 f_h ({\w{x}_{h,k}})]^{1/2} \left(
			   \w{\hat{d}}_{h,k} - \w{d}_{h,k} \right) \right\|_2.
\end{align*}
Using now the result from \cref{lemma norm equality lambda's d's}, 
we obtain,
\begin{equation*}
 \left\| \w{Z_1} \right\|_2 \leq \frac{1}{1 - \hat{\lambda}(\w{x}_{h,k}) } \sqrt{\lambda(\w{x}_{h,k})^2 - 
 \hat{\lambda}(\w{x}_{h,k})^2}.
\end{equation*}
Next, the second norm implies 
\begin{align*}
\left\| \w{Z}_2 \right\| & \leq    \left\| \int_0^1 \left( \frac{1}{\left(1 -y \hat{\lambda}(\w{x}_{h,k}) \right)^2} - 1 \right) dy \right\|_2 
			   \left\| [\nabla^2 f_h ({\w{x}_{h,k}})]^{1/2} \w{d}_{h,k} \right\|_2 \\
			   			    & = \frac{\hat{\lambda}(\w{x}_{h,k})}{1 - \hat{\lambda}(\w{x}_{h,k}) } \lambda(\w{x}_{h,k}).
\end{align*}
Putting this all together, inequality \cref{ineqlambda} becomes,
 \begin{equation*}
      \lambda(\w{x}_{h,k+1}) \leq  \frac{ \sqrt{\lambda(\w{x}_{h,k})^2 - 
 \hat{\lambda}(\w{x}_{h,k})^2}}{\left( 1 - \hat{\lambda}(\w{x}_{h,k}) \right)^2} + \frac{\hat{\lambda}(\w{x}_{h,k})}{\left( 1 - \hat{\lambda}(\w{x}_{h,k}) \right)^2}
   \lambda(\w{x}_{h,k}).
 \end{equation*}
as claimed. \qed
\end{proof}

\begin{proof}[Proof of \cref{lemma norm on d's upper bound}]
We have that,
\begin{align*}
    &\left\| [\nabla^2 f_h ({\w{x}_{h,k}})]^{1/2} \left(\w{\hat{d}}_{h,k} - \w{d}_{h,k} \right) \right\|_2  = \\
    & \left\| [\nabla^2 f_h ({\w{x}_{h,k}})]^{1/2} \left( [\nabla^2 f_h ({\w{x}_{h,k}})]^{-1} - \w{P} \left[\w{R} \nabla^2 f_h ({\w{x}_{h,k}}) \w{P}  \right]^{-1} \w{R} \right) \nabla f_h ({\w{x}_{h,k}}) \right\|_2 \leq  \\
    & \left\|  \w{I} - [\nabla^2 f_h ({\w{x}_{h,k}})]^{1/2} \w{P} \left[\w{R} \nabla^2 f_h ({\w{x}_{h,k}}) \w{P}  \right]^{-1} \w{R} [\nabla^2 f_h ({\w{x}_{h,k}})]^{1/2} \right\|_2 \lambda(\w{x}_{h,k}) =  \\
    & \left\|  \w{I} - \Pi_{[\nabla^2 f_h (\w{x}_{h,k})]^{1/2} \w{P} } \right\|_2 \lambda(\w{x}_{h,k}), \\
\end{align*}
where $\Pi_{[\nabla^2 f_h (\w{x}_{h,k})]^{1/2} \w{P} }$ is the orthogonal projection onto the $\operatorname{span}\{ [\nabla^2 f_h(\w{x}_{h,k})]^{\frac{1}{2}} \w{P}\}$. Since $\w{I} - \Pi_{[\nabla^2 f_h (\w{x}_{h,k})]^{1/2} \w{P} }$ is also an orthogonal projection we conclude that,
\begin{equation*}
     \left\| [\nabla^2 f_h ({\w{x}_{h,k}})]^{1/2} \left(\w{\hat{d}}_{h,k} - \w{d}_{h,k} \right) \right\|_2 \leq \lambda(\w{x}_{h,k}),
\end{equation*}
as claimed. \qed
\end{proof}

\begin{proof}[Proof of \cref{lemma bounds unknown quantity}]
Combining \cref{lemma upper bound lamda_hat} and the fact that
$\hat{\lambda}(\w{x}_{h,k}) > \nu$
we have that $0 < \frac{\hat{\lambda}(\w{x}_{h,k})}{\lambda(\w{x}_{h,k})} \leq 1$ with probability $1-\rho$ whenever $\w{x}_{h,k} \neq \w{x}_h^*$, $k \in \mathbb{N}$. Then, there exists $\mu_k \in \left(0, \frac{\hat{\lambda}(\w{x}_{h,k})}{\lambda(\w{x}_{h,k})} \right] $ and thus $0 < \mu_k \leq 1$. Further, $0 < \mu_k \leq \frac{\hat{\lambda}(\w{x}_{h,k})}{\lambda(\w{x}_{h,k})}$ implies,
\begin{align*}
    1 - \frac{\hat{\lambda}(\w{x}_{h,k})^2}{\lambda(\w{x}_{h,k})^2} \leq & 1 - \mu_k^2 < 1 \\
    \lambda(\w{x}_{h,k})^2 - \hat{\lambda}(\w{x}_{h,k})^2 \leq & (1 - \mu_k^2) \lambda(\w{x}_{h,k})^2 < \lambda(\w{x}_{h,k})^2,
\end{align*}
with probability $1-\rho$, as required. \qed
\end{proof}

\section{Further Numerical Experiments and Details} \label{sec num res}

In this section we provide full details of the experiments presented in \cref{sec experiments} of the main text as well as additional experiments. Specifically, in the first part, we describe three cases of Generalized Linear Models (GLMs) for solving the problem of maximum likelihood estimation that are used in numerical experiments. In the second part, we present the algorithms used in the comparisons against SIGMA and we describe how to efficiently compute the reduced Hessian matrix.

\subsection{Generalized Linear Models} \label{sec glms}

We consider solving the maximum likelihood estimation problem based on the Generalized Linear Models (GLMs) that are widely used in practice for prediction and classification problems. Given a collection of data points $ \{ \w{a}_i, b_i \}_{i = 1}^m $, where $\w{a}_i \in \mathbb{R}^N$, the problem has the form
\begin{equation*}
    \min_{\w{x}_{h} \in \mathbb{R}^N} f_h(\w{x}_h) := \frac{1}{m} \sum_{i=1}^m f_{h, i} (\langle \w{\w{a_i}}, \w{x}_{h} \rangle, b_i),
\end{equation*}
where $f$ is a GLM and we further assume that it is convex and bounded below so that the minimizer exists and is unique. When the user desires to enforce specific structure in the solution, $f$ is called a regularized GLM. Typical instances of regularization for GLMs are $\ell_1$- and/or $\ell_2$-regularization and thus the problem of maximum likelihood estimation is written
\begin{equation*} \label{regularized glm}
    \min_{\w{x}_{h} \in \mathbb{R}^N} f_h(\w{x}_h) + \xi_2 \|\w{x}_h \|_2^2 + \xi_1 g(\w{x}_h),
\end{equation*}
where $\xi_1$ and $\xi_2$ are positive numbers and $g(\w{x}_h) := \sum_{i=1}^N [ (c^2 + x_{h, i}^2)^{\frac{1}{2}} - c ]$, for some $c > 0$, is the pseudo-Hubert function which is a smooth approximation of the $\ell_1$-norm and provides good approximations for small $c$ \cite{fountoulakis2016second}. Below we present three special cases of GLMs. We discuss which of them satisfy \cref{ass self-conc} and comment on their practical implementation.

\textbf{Gaussian linear model:} $f_{h,i} (\langle \w{a_i}, \w{x}_{h} \rangle, b_i) := \frac{1}{2}( \w{a_i}^T \w{x}_{h}  - b_i)^2$, where $b_i \in \mathbb{R}$. This problem corresponds to the standard least-squares method. When $ \{ \w{a}_i \}_{i = 1}^m  $ are linearly independent vectors \cref{ass self-conc} is fulfilled. It holds that,
\begin{equation*}
    \nabla f_h (\w{x}_{h}) = \frac{1}{m} \sum_{i=1}^m \w{a}_i (\w{a}_i^T \w{x}_h - b_i) \ \ \text{\&} \ \ \nabla^2 f_h (\w{x}_{h}) = \frac{1}{m} \sum_{i=1}^m \w{a}_i \w{a}_i^T,
\end{equation*}
and therefore, using the above gradient and Hessian, one shall implement \cref{gama} with backtracking line search. 

\textbf{Poisson model with identity link function:} Initially, the model has the form 
$\tilde{f}_{h, i} (\langle \w{\w{a_i}}, \w{x}_{h} \rangle, b_i) := \w{a}_i^T \w{x}_h - b_i \log(\w{a}_i^T \w{x}_h)$ where, for 
$i = 1, \ldots, m$, $\operatorname{dom} f_{h, i} = \{ \w{x}_h  \in \mathbb{R}^N : \langle \w{a}_i \w{x}_h \rangle > 0 \}$ and we consider 
counted-valued responses, i.e.,  $b_i \in \mathbb{N}_{+}$. As a standard application of the Poisson model consider the low-light imaging 
reconstruction problem in \cite{harmany2011spiral}. Next, note that when $ \{ \w{a}_i \}_{i = 1}^m  $ are linearly independent vectors the function 
$\tilde{f}_h(\w{x}_h) := \frac{1}{m} \sum_{i=1}^m \tilde{f}_{h, i} (\langle \w{\w{a_i}}, \w{x}_{h} \rangle, b_i)$ is strictly convex self-concordant 
with constant $M_{\tilde{f}} = 2 \sqrt{m} \max \{\frac{1}{\sqrt{b_i}}, b_i \in \mathbb{N}_+, i = 1, \ldots, m \}  $ \cite[Theorem 4.1.1]{MR2142598} 
and thus $f_h (\w{x}_h) := \frac{M_{\tilde{f}}^2}{4}  \tilde{f}_h (\w{x}_h)$ satisfies \cref{ass self-conc} (see \cref{sec background}). As a 
result, one shall implement \cref{gama} using the scaled Poisson model which has gradient and Hessian matrix
\begin{equation*}
    \nabla f_h (\w{x}_{h}) = \frac{M_{\tilde{f}}^2}{4} \sum_{i=1}^m  (1 - \frac{b_i}{\w{a}_i^T \w{x}_h}) \w{a}_i \ \ \text{\&} \ \  \nabla^2 f_h (\w{x}_{h}) = \frac{M_{\tilde{f}}^2}{4} \sum_{i=1}^m \frac{b_i}{(\w{a}_i^T \w{x}_h)^2} \w{a}_i \w{a}_i^T,
\end{equation*}
respectively. Since at every step $k$ the update $\w{x}_{h,k} + t_{h,k} \w{d}$ must belong in $\operatorname{dom} f $ we adopt the following step size strategy for finding an appropriate initial value in $t_{h,k}$: By the proved optimal step-size parameter in \cref{lemma411} we start by selecting $t_{h,k} = \frac{1}{1 + \sqrt{ - \langle f_{h,k} (\w{x}_{h,k}), \w{d}  \rangle}}$ and then we increment it by a constant $\zeta > 1$, i.e., $\tilde{t}_{h,k} = \zeta t_{h,k}$. We repeat this procedure as long as $\w{x}_{h,k} + \tilde{t}_{h,k} \w{d} \in \operatorname{dom} f$. Then we initiate the backtracking line search with $ t_{h,k} = \min \{ \tilde{t}_{h,k}, 1 \}$, where $\tilde{t}_{h,k}$ is the largest value that satisfies $\w{x}_{h,k} + \tilde{t}_{h,k} \w{d} \in \operatorname{dom} f$ obtained by the previous procedure.

\textbf{Logistic model:} $ f_{h, i} (\langle \w{\w{a_i}}, \w{x}_{h} \rangle, b_i) := \log (1 + e^{-b_i \w{a}_i^T \w{x}_h})$, for binary responses, i.e., $b_i = \{ 0,  1\}$. Notice that the logistic model is \textit{not} self-concordant for all $\w{x}_h \in \operatorname{dom} f = \mathbb{R}^N$, nevertheless, we wish to illustrate the efficacy of \cref{gama} when \cref{ass self-conc} is not satisfied. The gradient and Hessian matrix of the logistic model are given by,
\begin{equation*}
    \nabla f_h (\w{x}_{h}) = - \frac{1}{m} \sum_{i=1}^m b_i \frac{e^{-b_i \w{a}_i^T \w{x}_h}}{1 + e^{-b_i \w{a}_i^T \w{x}_h}} \w{a}_i \ \ \text{\&} \ \     \nabla^2 f_h (\w{x}_{h}) =  \frac{1}{m} \sum_{i=1}^m b_i^2 \frac{e^{-b_i \w{a}_i^T \w{x}_h}}{(1 + e^{-b_i \w{a}_i^T \w{x}_h})^2} \w{a}_i \w{a}_i^T, 
\end{equation*}
respectively. \cref{gama} for the logistic model will be implemented with back-tracking line search.

We also clarify that the pseudo-Hubert function is not self-concordant whereas, on the hand, the $\ell_2$-norm does satisfy  \cref{ass self-conc} as a quadratic function.
Finally, for all three examples, \cref{gama} can be generated either by the Nystr\"om method or with any $\w{P}$ as long as \cref{assumption P} holds.

\subsection{Experiments}

\begin{table}[t]
    \centering
	\begin{tabular}{ |l|c|c|c|c|c| }
		\hline
		Datasets & Problem & $m$ & $N$ & $\xi_1$ & $\xi_2$  \\
		\hline
		CTslices & Gaussian model & $53,500$ & $385$ & $ 0 $ & $10^{-6}$  \\
		
		CMHS & Gaussian model & $ 2,204 $& $ 19,320 $ & $ 0 $,  $10^{-6}$& $10^{-6}$ \\
		
		Synthetic & Poisson model & $1,000$  & $ 800 $ & $ 0 $, $10^{-3}$ & $10^{-6}$ \\
		
		Gissette & Logistic model & $6,000$  & $5,000$ & $ 10^{-6} $ & $10^{-6}$\\
		
		Leukemia & Logistic model & $ 38 $  & $7,129$ & $0$ & $ 10^{-6} $\\
		
		Real-sim & Logistic model & $ 72,309 $  & $20,958$ & $0$ & $ 10^{-6} $\\
		\hline
	\end{tabular}
    \caption{Problem characteristics and corresponding datasets used in the experiments. All datasets are available from   
    \url{https://www.csie.ntu.edu.tw/~cjlin/libsvmtools/datasets/} and \url{http://archive.ics.uci.edu/ml/index.php}}
    \label{tab: datasets}
\end{table}

For each problem presented in the previous section, we experimented over real and synthetic datasets. To illustrate that the proposed algorithm is suitable for a wide range of problems, we consider regimes in which we have $m > N$ or $m < N$ (or even $m \gg N$ or $m \ll N$), for more details see \cref{tab: datasets}. What follows is a description of the algorithms used in comparisons against conventional SIGMA:
\begin{enumerate}
    \item Gradient Descent (GD) with back-tracking line search.
    \item Stochastic Gradient Descent (SGD) with step size rule $t_k = \frac{t}{ 1 + \gamma k}$, where $\gamma $ is set to $10^{-6}$ and $t$ was tuned manually depending on each problem.
    \item Newton method with backtracking line search.
    \item Sub-sampled Newton method (SubNewton) with back-tracking line search and manually tuned number of samples $|S_m|$  \cite{berahas2020investigation}.
    \item NewSamp with back-tracking line search and manually tuned number of samples $|S_m|$ \cite{NIPS2015_404dcc91}.
\end{enumerate}

For all algorithms, we measure the error (in log scale) using the norm of the gradient $\| \nabla f_h (\w{x}_{h,k}) \| $ and as a stopping criterion we use either if $\| \nabla f_h (\w{x}_{h,k}) \| \leq \epsilon $ or if a maximum CPU time (in seconds) is exceeded. In all experiments, SIGMA is tuned as follows. The Galerkin model is generated by the Nystr\"om method and $\w{P}$ is constructed according to \cref{def P} in each iteration. In the following remark we describe how to efficiently compute the reduced Hessian matrix in \cref{definition Q_H} for the GLMs.

\begin{remark} \label{remark reduced hess}
We clarify that it is expensive to perform the matrix multiplications when computing the reduced Hessian $\w{R} \nabla^2 f_h(\w{x}_{h,k}) \w{P}$ (\cref{definition Q_H}). However, since we make use of the (naive) Nystr\"om method with $\w{P}$ as in \cref{def P}, one need not perform the expensive matrix multiplications associated with the matrix $\w{P}$. Instead, it suffices only to sample (without replacement) the vector $\w{a}_i$. To see this, for example, consider the logistic regression problem.  The reduced Hessian matrix takes the following form
\begin{align*}
 \w{R} \nabla^2 f_h (\w{x}_{h})\w{P} &= \frac{1}{m} \sum_{i=1}^m b_i^2 \frac{e^{-b_i \w{a}_i^T \w{x}_h}}{(1 + e^{-b_i \w{a}_i^T \w{x}_h})^2} \w{R} \w{a}_i  \w{a}^T_i \w{P} \\ 
 & = \frac{1}{m} \sum_{i=1}^m b_i^2 \frac{e^{-b_i \w{a}_i^T \w{x}_h}}{(1 + e^{-b_i \w{a}_i^T \w{x}_h})^2} \w{R} \w{a}_i (\w{R} \w{a}_i)^T .
\end{align*}
Thus, instead of computing the product $\w{R} \w{a}_i  \w{a}^T_i \w{P}$, it suffices to sample $n$ from $N$ entries of the vector $\w{a}_i$ before forming the matrix $\w{a}_i  \w{a}^T_i$. This fact yields much faster iterations, i.e.,  $ \mathcal{O}(m n^2) $ operations are required to form the reduced Hessian matrix.
\end{remark}

Further, we use \textit{no} checking condition on whether to perform coarse or fine steps and thus at each iteration the coarse model is always employed ---the condition $\hat{\lambda}(\w{x}_{h,k}) > \nu $ can be trivially verified in practice for the values of $n$ used in our implementations. The extra results of the following two sections concern the conventional SIGMA, which uses the naive Nystr\"om method. Finally, the results were obtained using a standard desktop computer on a CPU and a Python implementation.

\subsubsection{Linear Regression} \label{subsec linear regr}

\begin{figure}[htbp]
	\begin{subfigure}{.3\textwidth}
		\centering
		\includegraphics[width=1\linewidth]{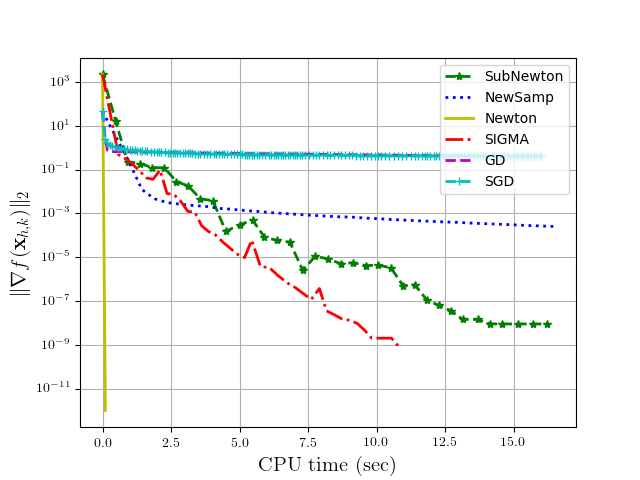}
		\caption{CTslices}
		\label{subfig ctslices}
	\end{subfigure}
	\begin{subfigure}{.3\textwidth}
		\centering
		\includegraphics[width=1\linewidth]{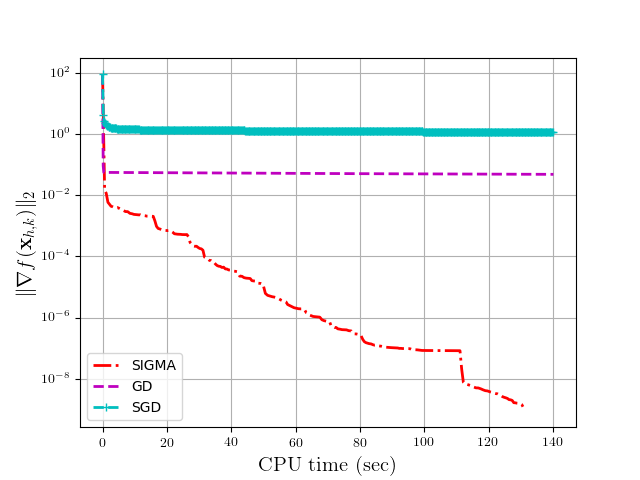}
		\caption{CMHS}
		\label{subfig cmhs}
	\end{subfigure}
	\begin{subfigure}{.3\textwidth}
		\centering
		\includegraphics[width=1\linewidth]{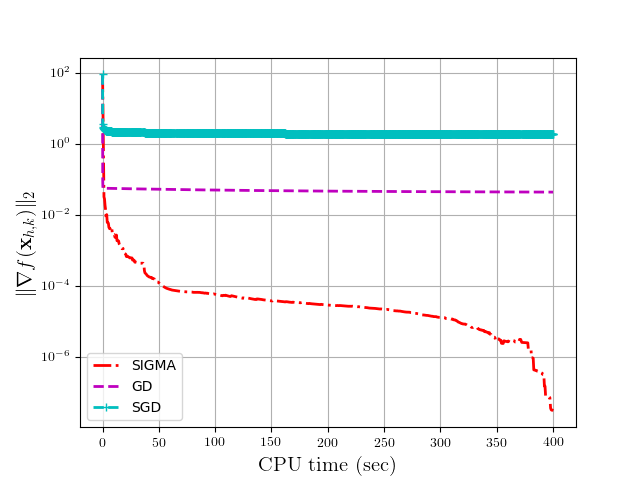}
		\caption{CMHS with $\xi_1 = 10^{-6}$}
		\label{subfig cmhsl1}
	\end{subfigure}
	\caption{Comparison of different algorithms over various datasets for linear regression. Error vs CPU time in seconds.}
	\label{fig gaussian}
\end{figure}

We consider two different regimes to validate the efficiency of SIGMA over the regularized Gaussian model: (i) when $ m \gg N$ and, (ii) when $m \ll N$.

\textbf{Regularized linear regression with $m \gg N$}. The CT slices dataset will serve the purpose of this regime (see  \cref{tab: datasets}). This problem is otherwise called as Ridge regression. The coarse model dimensions for SIGMA is set to $n = 0.7 N$ while for the sub-sampled Newton methods we randomly select $|S_m| = m/2$ samples to form the Hessian matrix at each iteration. The performance between the optimization methods for this example can be found in \cref{subfig ctslices}. As expected, the Newton method converges to the solution just after one iteration. Besides the Newton method, SIGMA and the sub-sampled Newton are the only algorithms that are able to reach very high accuracy. In particular, \cref{subfig ctslices} suggests that both methods enjoy a very fast rate while GD, SGD and NewSamp only achieve a slow linear rate. We emphasize that the sub-sampled Newton method  is particularly well suited for this regime, nevertheless SIGMA offers comparable, if not better, results indicating that it is efficient even for problems in this regime.

\textbf{Regularized linear regression with $m \ll N$}. The real strength of SIGMA emerges when $m \ll N$ with very large $N$. By using the Condition monitoring of hydraulic systems (CMHS) dataset, the problem dimensions grow very large and as a result the second-order methods used in the previous example are not applicable here due to memory limitations. Therefore, for this exmaple,  \cref{subfig cmhs} compares the performance between SIGMA, GD and SGD. Clearly, SIGMA compares favourably to the first-order methods as it is able to reach a very accurate solution. On the other hand, the first-order achieve a very slow convergence rate and are way far from the solution by the time SIGMA converges. In this regime, it is typical one to require enforcing sparsity in the solution. For this reason, in the next experiment we solve the same problem in which, now, in addition to $\ell_2$-norm, the pseudo-Hubert function is activated with $\xi_1 = 10^{-6}$. This form of regularization is what is also called elastic-net. Again, SIGMA outperforms its competitors (see \cref{subfig cmhsl1}). We recall that this instance of linear regression does not satisfy \cref{ass self-conc}. However, it is interesting to observe in \cref{subfig cmhsl1} that, before convergence, SIGMA achieves a sharp super-linear rate that approaches the quadratic rate of the Newton method. We highlight also that in this regime the Hessian matrix has $\operatorname{rank} (\nabla^2 f_h (\w{x}_{h})) = m$. To guarantee that the Hessian matrix is invertible we add a small amount $\xi_2$ in the diagonal of $\nabla^2 f_h (\w{x}_{h})$ and therefore there is a big gap between the $m$ and $m+1$ eigenvalues. As a result, the convergence behavior of SIGMA in \cref{subfig cmhs} and \cref{subfig cmhsl1} verifies our intuition regarding the efficiency of SIGMA in such problem structures, as well as the theoretical results of \cref{sec nystrom}.

\subsubsection{Sub-sampled SIGMA} \label{sec sub-sampled sigma}

\begin{figure}[htbp]
		\begin{subfigure}{.47\textwidth}
		\centering
		\includegraphics[width=1\linewidth]{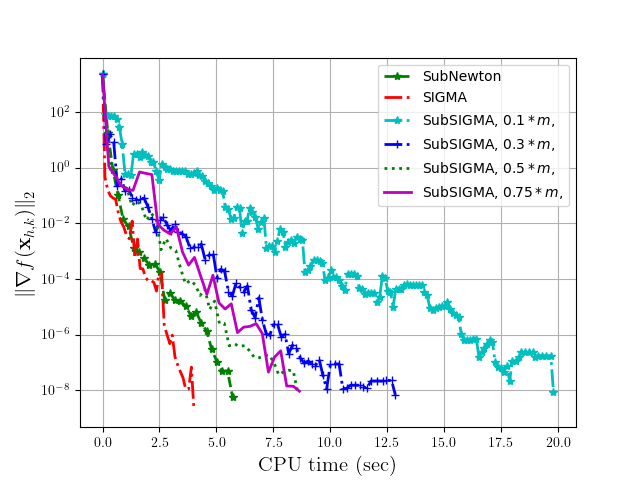}
		\caption{CT slices}
		\label{subfig subCTslices}
	\end{subfigure}
	\begin{subfigure}{.47\textwidth}
		\centering
		\includegraphics[width=1\linewidth]{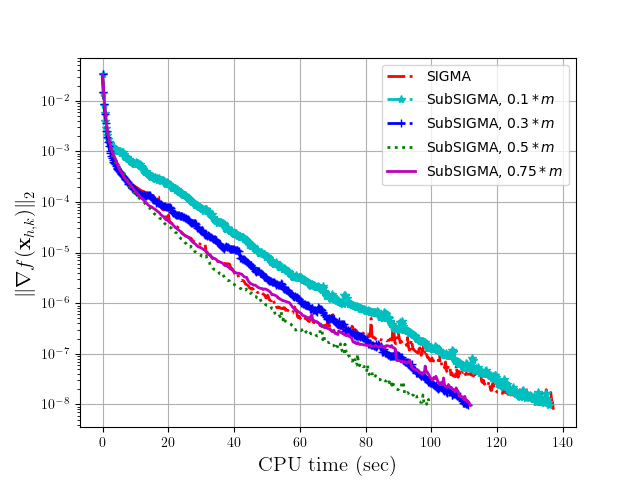}
		\caption{Real-sim}
		\label{subfig sub-realisim}
	\end{subfigure}
	\caption{Performance of sub-sampled SIGMA on the Gaussian and logistic models.}
	\label{fig sub sigma}
\end{figure}

In the last set of experiments we revisit the CTslices and Real-sim datasets for minimizing the Gaussian and Logistic models, respectively, but now we implement conventional SIGMA with sub-sampling. Although the analysis of SIGMA with sub-sampling is beyond the scope of this paper, in this section, we wish to identify particular examples in order to demonstrate, through numerical experiments, an improved convergence rate when solving the coarse model that is generated by the naive Nystr\"om method with sub-sampling. To compute the reduced Hessian matrix (eq. \cref{definition Q_H}) with sub-sampling we follow the procedure described in \cref{remark reduced hess} and, in addition, we sample (uniformly without replacement) $|S_m|$ from $m$ data points. For instance, the reduced Hessian matrix of the logistic model takes the following form
\begin{align*}
 \w{R} \nabla^2 f_h (\w{x}_{h})\w{P} 
 & = \frac{1}{|S_m|} \sum_{i \in S_m} b_i^2 \frac{e^{-b_i \w{a}_i^T \w{x}_h}}{(1 + e^{-b_i \w{a}_i^T \w{x}_h})^2} \w{R} \w{a}_i (\w{R} \w{a}_i)^T .
\end{align*}
As a result, the cost of forming the reduced Hessian matrix with sub-sampling will be $\mathcal{O} (|S_m| n^2) $
which yields faster iterations compared to the standard SIGMA which requires $\mathcal{O} (m n^2) $ for the same purpose. However, as we illustrate in \cref{fig sub sigma}, this, in general, does not necessarily mean faster convergence.

In \cref{fig sub sigma} we provide comparisons between  SIGMA, sub-sampled SIGMA and the sub-sampled Newton method. As we revisit the examples from sections \ref{subsec linear regr} and \ref{subsec log regr} (\cref{subfig subCTslices} and \ref{subfig sub-realisim}, respectively), the sub-sampled Newton method and SIGMA are tuned as before. In addition, in order to capture the sub-sampling effect on SIGMA, we consider different values for the sub-sampling parameter: $|S_m| = \{0.1m, 0.3m, 0.5m, 0.75m \}$. \cref{subfig subCTslices} shows that SIGMA and the sub-sampled Newton method outperform all instances of sub-sampled SIGMA when solving the Gaussian model for the CTslices dataset. Note that in this example, the problem dimensions $N$ is only $385$ and thus it is natural to expect that sampling both $m$ and $N$ will result in a slower convergence since much of the second-order information is lost. For this reason, in the second experiment we consider the Real-sim dataset over the logistic model for which both $m$ and $N$ are quite large. In this case, \cref{subfig sub-realisim} shows that great improvements are achieved for all instances of the sub-sampled SIGMA particularly when aiming for very accurate solutions. Specifically, sub-sampled SIGMA with $|S_m| = 0.5m$ significantly outperforms the standard SIGMA while, in the worst case, the sub-sampled SIGMA with $|S_m| = 0.1m$ offers comparable results to the standard SIGMA. To this end, as illustrated by \cref{fig sub sigma}, SIGMA with sub-sampling will be more suitable to problems with very large $m$ and $N$ since in this case the computational bottleneck appears in both when evaluating the Hessian matrix and solving the corresponding system of linear equations. Such problems lie at the core of large-scale optimization and therefore methods that exhibit the advantages of the sub-sampled SIGMA can potentially offer a powerful tool for solving complex problems that arise in modern machine learning applications. 

\bibliographystyle{plain}
\bibliography{references}
%
%
%
%
%
%
%
%
%

\end{document}